\newtheorem{theorem}{Theorem}[section]
\newtheorem{lemma}[theorem]{Lemma}
\newtheorem{definition}[theorem]{Definition}
\newtheorem{proposition}[theorem]{Proposition}
\newtheorem{corollary}[theorem]{Corollary}
\theoremstyle{remark}
\newtheorem{remark}{Remark}[theorem]
\newtheorem{example}{Example}[theorem]
\numberwithin{equation}{section}
\newenvironment{sqremark}{\begin{remark}}{\hfill \tiny $\blacksquare$ \end{remark}}
\newenvironment{sqexample}{\begin{example}}{\hfill \tiny $\blacksquare$ \end{example}}
\newcommand{\R}{\mathbb{R}}
\newcommand{\N}{\mathbb{N}}
\newcommand{\E}{\mathbb{E}}
\newcommand{\X}{\mathbb{X}}
\def\P{\mathbb{P}}
\newcommand\Q{\mathbb{Q}}
\newcommand\F{\mathcal{F}}
\newcommand{\alphabet}[1][d]{{A}_{#1}}
\newcommand{\TA}[1][d]{T(\R^{#1})}
\newcommand{\eTA}[1][d]{T((\R^{#1}))}
\newcommand{\tTA}[2][d]{T^{#2}(\R^{#1})}
\newcommand{\emptyword}{{\color{NavyBlue}{\mathbf{\varnothing}}}}
\newcommand{\word}[1]{{\color{NavyBlue}{\mathbf{#1}}}}
\newcommand{\proj}[1]{|_{\word{#1}}}
\newcommand{\conpow}[1]{^{\otimes #1}}
\newcommand{\shuprod}{\mathrel{\sqcup \mkern -3.2mu \sqcup}}
\newcommand{\shupow}[1]{^{\shuprod #1}}
\newcommand{\G}{{\bf \Lambda}}
\NewDocumentCommand{\sig}{O{t} O{W}}{\widehat{\mathbb{#2}}_{#1}}
\NewDocumentCommand{\sigE}{O{t} O{W}}{\E[\sig[#1][#2]]}
\NewDocumentCommand{\sigX}{O{t} O{X}}{\mathbb{#2}_{#1}}
\NewDocumentCommand{\bracketsigX}{O{t} O{X} m}{\left \langle #3, \sigX[#1][#2] \right \rangle} 
\NewDocumentCommand{\bracketsig}{O{t} O{W} m}{\left \langle #3, \sig[#1][#2] \right \rangle}   
\NewDocumentCommand{\bracketsigE}{O{t} O{W} m}{\left \langle #3, \sigE[#1][#2] \right \rangle} 
\NewDocumentCommand{\ssig}{O{t} O{W} O{\blambda}}{\widehat{\mathbb{#2}}^{#3}_{#1}}
\NewDocumentCommand{\ssigE}{O{t} O{W} O{\blambda}}{\E[\ssig[#1][#2][#3]]}
\NewDocumentCommand{\Essig}{O{t} O{\blambda}}{\widehat{\mathcal{E}}_{#1}^{#2}}
\NewDocumentCommand{\ssigX}{O{t} O{X} O{\blambda}}{\mathbb{#2}^{#3}_{#1}}
\NewDocumentCommand{\D}{O{h} O{\blambda}}{{\bf D}_{#1}^{#2}}
\NewDocumentCommand{\bracketssigX}{O{t} O{X} O{\blambda} m}{\left \langle #4, \ssigX[#1][#2][#3] \right \rangle} 
\NewDocumentCommand{\bracketssig}{O{t} O{W} O{\blambda} m}{\left \langle #4, \ssig[#1][#2][#3] \right \rangle}   
\NewDocumentCommand{\bracketssigE}{O{t} O{W} O{\blambda} m}{\left \langle #4, \ssigE[#1][#2][#3] \right \rangle} 
\newcommand{\bsigma}{\bm{\sigma}}
\newcommand{\bpsi}{\bm{\psi}}
\newcommand{\brho}{\bm{\rho}}
\newcommand{\blambda}{\bm{\lambda}}
\newcommand{\bell}{\bm{\ell}}
\newcommand{\bp}{\bm{p}}
\newcommand{\bq}{\bm{q}}
\newtcolorbox{crossout}{
    blanker, 
    overlay={
        \draw[red, thick] (frame.north west) -- (frame.south east);
        \draw[red, thick] (frame.north east) -- (frame.south west);
    },
    breakable 
}
\title{Exponentially Fading Memory Signature}
\author[a]{Eduardo Abi Jaber\thanks{eduardo.abi-jaber@polytechnique.edu.  EAJ is  grateful for the financial support from the Chaires FiME-FDD, Financial Risks and Deep Finance \& Statistics at Ecole Polytechnique.}}
\author[a,b]{Dimitri Sotnikov\thanks{{dmitrii.sotnikov@polytechnique.edu. Corresponding author. DS is grateful for the financial support provided by Engie Global Markets.  \\ Both authors thank Paul Hager, Fabian Harang, Luca Pelizzari, Josef Teichmann, and William Turner for fruitful discussions and insightful comments. {We also thank the anonymous referee for their careful reading and valuable suggestions, which significantly improved the paper.}}}}
\affil[a]{École Polytechnique, CMAP, Route de Saclay
91128 Palaiseau, France}
\affil[b]{Engie Global Markets,  1 Pl. Samuel de Champlain, 92400 Courbevoie, France}
\begin{document}

\maketitle
    \begin{abstract}
    We introduce the exponentially fading memory (EFM) signature, a time-invariant transformation of an infinite (possibly rough) path that serves as a mean-reverting analogue of the classical path signature. We construct the EFM-signature via rough path theory, carefully adapted to accommodate improper integration from minus infinity. The EFM-signature retains many of the key algebraic and analytical properties of classical signatures, including a suitably modified Chen identity, the linearization property, path-determinacy, and the universal approximation property. From the probabilistic perspective,  the EFM-signature provides a “stationarized” representation, making it particularly well-suited for time-series analysis and signal processing overcoming the shortcomings of the standard signature. In particular, the EFM-signature of time-augmented Brownian motion evolves as a group-valued Ornstein–Uhlenbeck process. We establish its stationarity, Markov property, and exponential ergodicity in the Wasserstein distance, and we derive an explicit  formula à la Fawcett for its expected value in terms of Magnus expansions. We also study linear combinations of EFM-signature elements and  the computation of associated characteristic functions in terms of a mean-reverting infinite dimensional Riccati equation.
\end{abstract}

\textbf{Keywords: } Path Signature, Fading Memory, Rough Paths, Mean-Reverting Processes

\section{Introduction}
Fading memory and time invariance are important properties when modeling ``memory'' effects of dynamical systems.  
\textit{Time invariance} means that the output signal $Y_t$ at time $t$ depends only on the continuous input signal $(X_{s})_{-\infty < s \leq t}$ up to time $t$, but not on the absolute time $t$. In other words, a parallel shift in time of the input signal results in an identical shift of the output signal. More precisely, we suppose that there exists a functional $F \colon C(\R_+, \R^d) \to \R$ such that, for all $t \in \R$,
\begin{equation}\label{eq:time_invariance}
    Y_t = F((X_{t - s})_{s \geq 0}).
\end{equation}

Time invariance postulates, in some sense, a stationarity in the relationship between input and output. This is a much weaker condition than the stationarity of the output itself, as it allows for a nonstationary input. Nevertheless, the assumption of stationary dependence is crucial for the application of machine learning and econometric methods in time-series analysis. It is worth noting that, in practice, often only a single realization of a time series or a process trajectory---referred to more generally here as a ``signal''---is available, for instance, in financial data. Within this framework, it is natural to assume that, at any given time $t$, the input signal $X$ is observable from the infinitely distant past $-\infty$ up to $t$. Without this assumption, the notion of time invariance would not make sense.

 \textit{Fading memory} is an additional assumption on the functional $F$, introduced to handle the infinite length of the past in a tractable way. Informally, it means that the influence of the distant past diminishes over time, i.e., that $Y_t$ gradually ``forgets'' the remote history of $X$. This idea dates back to the works of \citet{volterra2005theory} and \citet{wiener1958nonlinear}. \citet{Boyd1985FadingMA} formalize this concept by requiring the functional $F$ to be continuous not with respect to the uniform topology, but with respect to a \textit{weighted} uniform topology, thereby strengthening the notion of continuity. That is, $F$ has fading memory if its output remains close for input paths that are close in the recent past, even if they differ in the distant past.

Moreover, \citet[Theorem 5]{Boyd1985FadingMA} prove a folk theorem stating that any linear time-invariant operator with fading memory admits a convolution representation. In discrete time, this implies that the output can be written as a weighted moving average:
\begin{equation}
    y_n = F((x_{n - k})_{k \geq 0}) = \sum_{k \geq 0} w_k x_{n - k},
\end{equation}
for some sequence of weights $w = (w_k)_{k \geq 0} \in \ell^1$. A particularly popular example of such a weighted moving average is the exponential moving average (EWMA), defined as
\begin{equation}\label{eq:ewma}
    y_n = \sum_{k \geq 0} \rho^k x_{n - k}, \quad \rho \in (0, 1).
\end{equation}
Unlike the general case, the EWMA yields a \emph{Markovian} sequence, as it satisfies $y_{n + 1} = \rho y_n + x_{n + 1}$, which significantly simplifies both analysis and computation. 

However, linear functionals are quite limited in their ability to capture complex memory effects. \citet[Theorem 1]{Boyd1985FadingMA} address the nonlinear case and provide an approximation result for time-invariant fading memory operators using Volterra series. This result is a consequence of the Stone--Weierstrass theorem and thus relies on the algebraic structure of the Volterra series. Nevertheless, as the authors point out, identifying an explicit approximating Volterra series remains a challenging task:
\begin{quote}
    ``While the approximations are certainly strong enough to be useful in applications like macro-modeling of complicated systems or in universal nonlinear system identifiers, we know of no general procedure, based only on input/output measurements, by which an approximation can be found.''
\end{quote}
{For a review of the Volterra series method and the different approaches to estimating Volterra kernels, we refer to \citet*{cheng2017volterra}.}

{Path signatures offer an alternative to Volterra series. Originally introduced by \citet{chen1958integration}, signatures have played an important role in stochastic Taylor expansions (see \citet{kloeden1992stochastic, benarous1989flots}) and have attracted significant attention through the work of Terry Lyons and the development of rough path theory (see \citet*{lyons2007differential}), which has led to a wide range of applications, particularly in machine learning.} We refer the interested reader to the comprehensive surveys by \citet{Chevyrev2016} and \citet{lyons2022signature}. Similar to Volterra series, path signatures possess a rich algebraic structure and enjoy a linearization property, making them powerful tools for functional approximation. A key advantage of path signatures, compared to Volterra series, lies in their computational tractability, made possible by the celebrated \cite{chen1958integration} identity.

When applied to time series or stochastic processes, the standard approach treats path signatures as features that efficiently summarize the information contained in the signal; see, e.g., \citet*{levin2016learningpastpredictingstatistics, morrill2020generalised}. Recent works by \citet*{cuchiero2022theocalib} and \citet{abijaber2024signature} propose an alternative perspective, where linear functionals on path signatures are studied as stochastic processes themselves. This viewpoint allows for explicit representations of path-dependent processes as linear functionals of the time-augmented Brownian motion signature, as shown in \citet*{jaber2024pathdependentprocessessignatures}.

However, this framework is not naturally suited for modeling time-invariant dependencies as defined in \eqref{eq:time_invariance}. In particular, one cannot directly compute the signature of an infinite path. To incorporate information from the entire past, a natural model would be
\begin{equation}\label{eq:sig_model}
    Y_t = \bracketsigX[0, t][\widehat{X}]{\bell}, \quad t \geq 0,
\end{equation}
where $\sigX[0, t][\widehat{X}]$ denotes the signature of the time-augmented path $\widehat{X}_t = (t, X_t)$ over $[0, t]$.

This model is clearly not time-invariant and can easily lead to overfitting, as illustrated in the examples below. Furthermore, since signatures play a role analogous to polynomials in path space, \eqref{eq:sig_model} can be interpreted as a kind of Taylor expansion of the output process $Y$; see, e.g., \citet*{litterer2014chen, kloeden1992stochastic}. While such expansions offer excellent local approximation properties, they may fail globally: a model trained on an interval $[0, T_1]$ may diverge when applied on a longer interval $[0, T_2]$ with $T_2 > T_1$, as reported in \citet{abijaber2024signature}. This behavior is undesirable in the context of modeling time-invariant systems \eqref{eq:time_invariance}.

One could ``stationarize'' the signature model \eqref{eq:sig_model} to make it time-invariant by computing the path signature over a rolling window of fixed length $\Delta > 0$, that is, by considering
\begin{equation}
    Y_t = \bracketsigX[t - \Delta, t][\widehat{X}]{\bell}, \quad t \in \R.
\end{equation}
This approach serves as a continuous-time analogue of the expected signature (ES) model introduced by \citet*{levin2016learningpastpredictingstatistics}. However, this modification introduces several limitations. {First, it assumes that the future depends only on a fixed portion of the past, abruptly excluding all information that falls outside the rolling window.
Second, although this model enforces time invariance and can be handled efficiently from a numerical perspective, its dynamics remain analytically complicated: the moving lower limit $t - \Delta$ in the signature computation leads to the delayed equation
\[
    d\sigX[t - \Delta, t]
    =
    \sigX[t - \Delta, t] \otimes \circ dX_t
    -
    \circ dX_{t - \Delta} \otimes \sigX[t - \Delta, t],
\]
and prevents one from obtaining a simple semimartingale decomposition for linear functionals applied to it.}

In this paper, we aim to construct an object that combines the best features from three different approaches:
\begin{enumerate}
    \item Time-invariance of Volterra series,
    \item Algebraic properties, approximation power, and numerical tractability of signatures,
    \item Markovian structure of EWMA.
\end{enumerate}

To achieve this, we introduce the \textit{exponentially fading memory signature} (EFM-signature for short), defined as follows: for an $\R^d$-valued continuous  path  $X = (X_t)_{t \in \R}$ and a vector $\blambda \in \R^d$ with positive entries, the fading-memory signature is defined as {a collection of iterated integrals of different orders $n$
\begin{equation}\label{eq:statsig_intro}
        \ssigX[t][X][\blambda, \word{i_1 \cdots i_n}] := \int_{-\infty < u_1 < \cdots < u_n < t} e^{-\blambda^{i_1}(t - u_1)} dX_{u_1}^{i_1}  \ldots  e^{-\blambda^{i_n}(t - u_n)} dX_{u_n}^{i_n}, \quad t \in\R,
    \end{equation}
where $i_1, \ldots, i_n \in \{1, \ldots, d\}$.}

In particular, we will show that when the input signal is a Brownian motion, its EFM-signature becomes a stationary $\eTA$-valued Markov process, which can be treated with standard tools of stochastic calculus.

The well-posedness of the iterated integrals is ensured by the exponential weights. These weights maintain the same order of magnitude for each component of the EFM-signature as $t$ evolves, which is crucial for many applications.
Moreover, the exponential form is crucial for preserving the algebraic structure of signatures, in particular, the Chen's identity and the shuffle property. The same reason explains the choice of the argument $(t - u_j)$ in the terms $e^{-\blambda(t - u_j)} \odot dX_{u_j}$, rather than the choice $(u_{j+1} - u_j)$, which would correspond to a particular case of the Volterra signature introduced by \citet{Harang2021}. Indeed, $(t - u_j)$ guarantees that the mean-reversion rate increases with the signature level, which is necessary for the multiplicative structure. In contrast, in the Volterra case with $(u_{j+1} - u_j)$, neither Chen's identity nor the shuffle property hold. For this reason, \cite{Harang2021} propose replacing the tensor product with a convolution product to preserve multiplicativity, which makes the computation of the Volterra signature more intricate.

The importance of ``forgetting'' --- that is, relying more heavily on recent information --- was highlighted and leveraged by \citet*{toth2024learningforgetbayesiantime}, who proposed a memory mechanism similar to ours to compute random Fourier signature features, and developed a Bayesian framework for sequence-to-sequence regression problems.
We also refer to the work of \citet{corcuera2024fractionalsignaturegeneralisationsignature}, which introduces fractional signatures and demonstrates improved performance when memory is incorporated into the signature.

A discretized version of the EFM-signature can be seen as a particular case of the general weighted iterated sum introduced by \citet{diehl2024fruits} for an appropriately chosen weight function. {This work examines the application of the signature approach to time series, where it has been demonstrated to perform well in classification tasks.} However, the authors do not introduce this particular form and do not leverage its algebraic and analytical properties.

{We develop a framework for the EFM-signature, establishing its principal algebraic, analytic, and probabilistic properties. We construct the EFM-signature via rough path theory, carefully adapted to accommodate improper integration from minus infinity (Theorem~\ref{thm:fm_sig_existence}) and  we prove the fading memory property (Theorem~\ref{thm:rough_fm}). We show that the EFM-signature preserves many of the essential features of classical signatures—such as a modified Chen identity (Proposition~\ref{prop:Chen_FM}), the linearization property (Proposition~\ref{prop:shuffle}), uniqueness (Proposition~\ref{prop:uniqueness}), the universal approximation theorem (Theorem~\ref{thm:UAT}), and moment determinacy (Proposition~\ref{prop:momentss})—while naturally adapting them to a mean-reverting, time-invariant setting. The EFM-signature of piece-wise linear path can be computed explicitly (Theorem~\ref{thm:linear_path_magnus}), which allows for an efficient numerical computation and simulation. We pay particular attention to the case of time-augmented Brownian motion, for which the EFM-signature evolves as a group-valued Ornstein–Uhlenbeck process. In this context, we prove its Markov property (Theorem~\ref{thm:Markov}), stationarity (Theorem~\ref{thm:stationarity}), and exponential ergodicity under the Wasserstein distance (Theorem~\ref{thm:ergodicity}), and we derive explicit formulas for its expected EFM-signature in the spirit of Fawcett’s formula via Magnus expansions (Theorem~\ref{thm:expected_sig}). Finally, we investigate linear functionals of the EFM-signature and establish an Itô-type decomposition (Proposition~\ref{prop:ito_general}) along with a representation of the associated characteristic function via an infinite-dimensional, mean-reverting Riccati equation (Theorem~\ref{thm:char}).
}

\paragraph{Outline.} Section~\ref{sect:examples} illustrates the motivation for introducing the EFM-signature with two simple examples. In Section~\ref{sect:definition}, we introduce the notation, provide a definition of the fading-memory signature, and discuss its first properties. Section~\ref{sect:algebraic_prop} establishes the algebraic and analytical properties of the EFM-signature for the general semimartingale path. In Section~\ref{sect:SSig_BM}, we treat the case of the Brownian motion path and study its probabilistic properties. Section~\ref{sect:linear_forms} is dedicated to the linear functionals of the EFM-signature of Brownian motion. Finally, in Section~\ref{sect:rough_paths}, we develop the rough path tools necessary to treat the EFM-signature and we establish the integration result and the fading memory property.

\section{Motivating examples}\label{sect:examples}

To illustrate shortcomings of the standard signature model \eqref{eq:sig_model} and the relevance of our approach, we consider two toy examples.

\subsection{Signature representation of an Ornstein--Uhlenbeck process}\label{ex:OU_repr} 

 Consider an Ornstein--Uhlenbeck signal $Y$ given by  
\begin{equation}
    d Y_t = -\mu Y_t \, d t + dW_t,  \quad t \in \R, \quad Y_0 \in \mathbb R.
\end{equation}  
It can be written as an infinite linear combination of signature elements of the time extended Brownian motion $(t,W_t)$:
\begin{align}\label{eq:OUsig}
Y_t = Y_0 + Y_0\sum_{k \geq 1}(-\mu)^k\int_{0 < s_1 < \ldots <s_k <t}ds_1\ldots ds_k + \sum_{k \geq 1}(-\mu)^{k-1}\int_{0 < s_1 < \ldots <s_k <t}dW_{s_1}\,ds_2\ldots ds_k,
    \end{align}
 as shown by \citet{abijaber2024signature}. In practice, this series should be truncated at some order $N$. For the numerical example, we chose $N = 10$. Similarly, we will show in Example~\ref{ex:OU_repr_revisited} that $Y$ can be approximated by a linear functional \eqref{eq:OU_approx} of the EFM-signature, also truncated at order $N = 10$.  

The trajectories of these linear combinations are shown in Figure~\ref{fig:sig_repr}, the representation \eqref{eq:OUsig} using the standard signature on the left, and \eqref{eq:OU_approx} using the EFM-signature on the right. We clearly observe the explosive behavior of the standard signature when $T$ is far from zero on the left graph, exactly as in the case of the Taylor expansion of real-valued functions, which is accurate in a neighborhood of the expansion point. On the contrary, the EFM-signature on the right-hand side provides a uniform global approximation independent of the time interval length overcoming the explosive problem of the standard signature.

\begin{figure}[H]
    \begin{center}
    \includegraphics[width=1\linewidth]{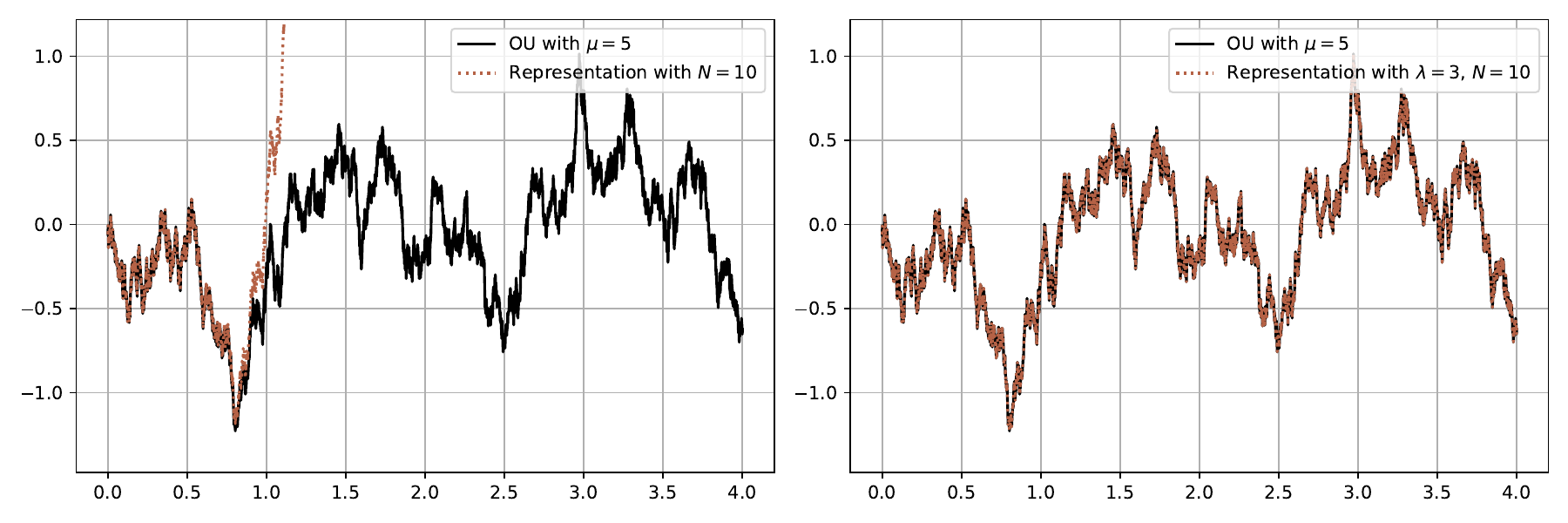}
    \caption{Representation of an OU process as a linear functional of the signature (on the left) and the EFM-signature (on the right) of the time-augmented Brownian motion. The parameter of the EFM-signature is $\lambda = 3$, and the mean-reversion parameter of the OU process is $\mu = 5$. Both signatures were truncated at order $N = 10$.  
}
    \label{fig:sig_repr}
    \end{center}
\end{figure}

\subsection{Learning stochastic dynamics with signatures}
In the second example, we compare regression models based on the signature and the EFM-signature to fit the dynamics of a stationary SDE solution.  

Consider a signal $Y$ following the dynamics:
\begin{equation}\label{eq:langevin}
    dY_t = -\mu Y_t^p\, dt + dW_t, \quad t \in \R,
\end{equation}
for some $\mu > 0$ and an odd $p \in \N$. Our goal is to learn the dynamics \eqref{eq:langevin} by observing the signal $Y$ and the Brownian motion $W$, using only a single trajectory of the process. Given a time grid $0 = t_0 < t_1 < \ldots < t_N = T$, we formulate the following regression problem:
\begin{equation}
    \min_{\bell \in \tTA[2]{N}}\left\{ \dfrac{1}{2(N + 1)}\sum_{0 \leq t_i \leq T} \left| Y_{t_i} - \bracketsigX[t_i]{\bell} \right|^2 
    + \alpha \omega \|\bell\|_1 
    + \dfrac{1}{2}\alpha (1 - \omega) \|\bell\|_2^2\right\},
\end{equation}
where $\alpha > 0$ is a regularization parameter, $\omega \in [0, 1]$ is a mixing parameter, and the $\ell^q$-norm for $q \geq 1$ is defined by
\begin{equation}
    \|\bell\|_q := \left(\sum_{n=0}^N \sum_{|\word{v}| = n} |\bell^{\word{v}}|^q\right)^{1/q}, \quad \bell \in \tTA[2]{N},
\end{equation}
{we refer to Section~\ref{subsection:prelimeray} for the precise definitions and notations.}
Three options for the signature $\sigX$ are considered:

\begin{enumerate}
    \item \textbf{Signature of time-augmented Brownian motion $\widehat W_t=(t,W_t)$}: $\sigX = \sig$. An approach implemented by \citet*{cuchiero2022theocalib} and \citet{abijaber2024signature} seems to be the most natural from a theoretical point of view and consists of regressing the signal against the (extended) signature of the underlying Brownian motion. Although this method is able to fit nonstationary dynamics given many trajectory samples, as indicated in the mentioned works, the model quickly overfits when the dynamics is stationary and can hardly reproduce a reasonable behavior on out-of-sample intervals, as shown in Figure~\ref{fig:regression_comparison} (brown trajectory).
    
    \item \textbf{Signature of time-augmented stationary OU process $\widehat U_t=(t,U_t)$}: $\sigX = \sigX[t][\widehat{U}]$, where $U$ is an Ornstein--Uhlenbeck process satisfying  
    $$
    dU_t = -\lambda U_t dt + dW_t, \quad t \in \R,
    $$
    for some $\lambda > 0$. At first glance, this approach seems to be better suited for dealing with stationary processes. For instance, \citet*{cuchiero2023spvix} use the signatures of correlated OU processes for stochastic volatility modeling. However, due to the presence of nonstationary terms corresponding to the $t$-component of the path and the nonstationary nature of the model itself, this approach also demonstrates poor out-of-sample performance (green trajectory in Figure~\ref{fig:regression_comparison}).
    
    \item \textbf{EFM-signature of time-augmented Brownian motion}: $\sigX = \ssig$. The third model relies on the EFM-signature of the Brownian motion  observed on $(-\infty, T]$ introduced in this paper. As shown in the illustration, this model demonstrates the best out-of-sample performance and remains  remarkably robust and close to the signal process, thanks to the time invariance embedded in the construction of the EFM-signature.
\end{enumerate}

For the numerical illustration, we took $N = 6$ and $T = 2$ and simulated a trajectory with a sampling frequency of $3650$ points per unit time. The parameters of $Y$ were chosen as $p = 5$ and $\mu = 10$. For each model, the interval $[0,1]$ was used as the training sample, while the entire interval $[0,T]$ was used to calibrate the hyperparameters $\alpha$, $\omega$, and $\lambda$. The remaining part, $[2,4]$, was not involved in the optimization problem and is provided to evaluate the out-of-sample model performance.

\begin{figure}[H]
    \begin{center}
    \includegraphics[width=0.6\linewidth]{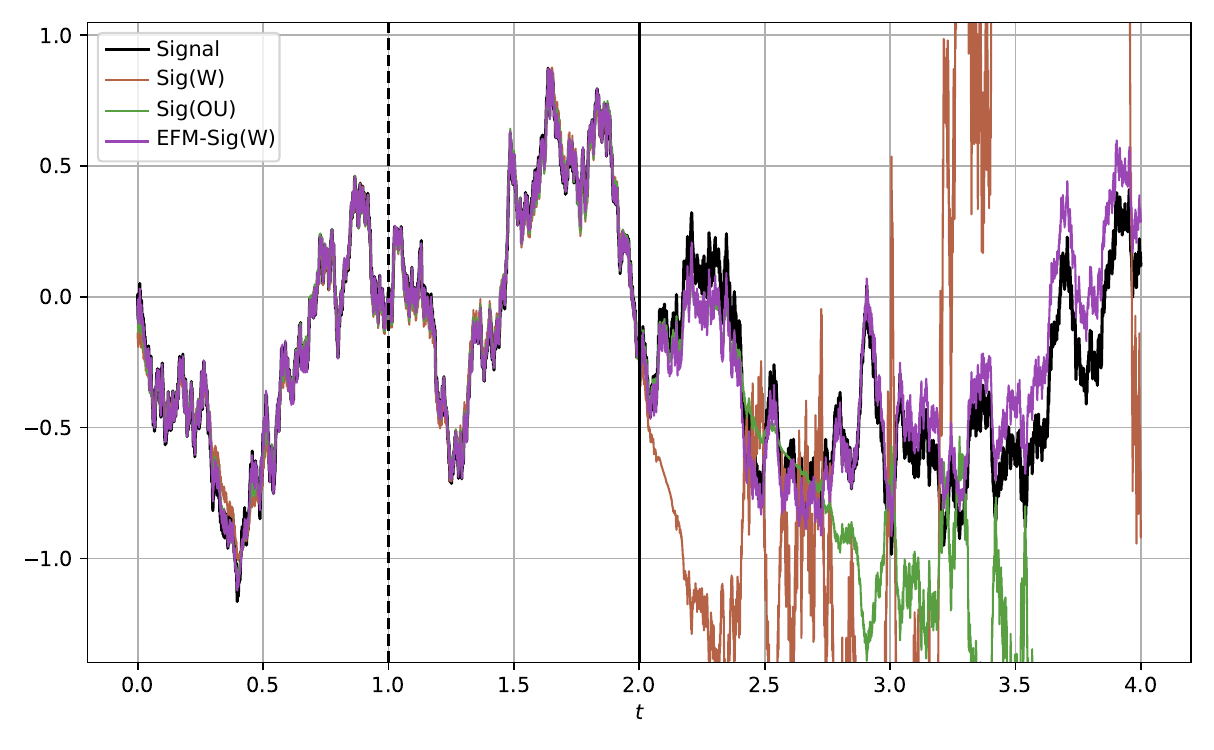}
    \caption{Trajectories predicted by the three regression models fitted to the stationary signal (black) given by \eqref{eq:langevin}: regression on $\sig$ (brown), regression on $\sigX[t][\widehat{U}]$ (green), and regression on $\ssig$ (violet). Black vertical lines indicate the training interval (dotted) and the in-sample interval (solid).
    }
    \label{fig:regression_comparison}
    \end{center}
\end{figure}

\section{{Exponentially fading memory signature and its first properties}}\label{sect:definition}
\paragraph{Notations.} Throughout this paper, $(\Omega, \F, (\F_t)_{t \in \R}, \P)$ will denote a filtered probability space. We will use the notion of increment semimartingales, i.e., processes $X$ such that, for all $s \in \R$, the process $(X_{s + t} - X_s)_{t \geq 0}$ is a semimartingale in the usual sense with respect to $(\F_{s+t})_{t \geq 0}$. {In this case, the notion of a stochastic integral with respect to increment semimartingales can be defined, and integration over the whole real line becomes possible. More precisely, if $X = A + M$, where $A$ is a process of bounded variation and $M$ is an increment local martingale, then the stochastic integral
\begin{equation}\label{eq:improper_stochastic_int}
    I_t = \int_{-\infty}^t h_s\, dX_s = \int_{-\infty}^t h_s\, dA_s + \int_{-\infty}^t h_s\, dM_s,
\end{equation}
is well-defined for predictable processes $h$ such that
\[
\int_{-\infty}^t |h_s| \, d|A_s| + \int_{-\infty}^t h_s^2 \, d\langle M \rangle_s < \infty, \quad \text{a.s.}
\]
and
\[
\lim_{u \to -\infty} \left( \int_{-\infty}^u |h_s| \, d|A_s| + \int_{-\infty}^u h_s^2 \, d\langle M \rangle_s \right) = 0 \quad \text{a.s.}
\]
The first integral in \eqref{eq:improper_stochastic_int} is defined pathwise as a Lebesgue--Stieltjes integral, and the second one is defined as a stochastic increment integral. Moreover, we have $\lim_{t \to -\infty} I_t = 0$ a.s. For details, we refer to the works of \citet*{basseoconnor2010martingaletypeprocessesindexedreal,OConnor2014}.} This theory allows us to manipulate stochastic integrals on the real line in exactly the same way as standard stochastic integrals on $\R_+$. With a slight abuse of terminology, we will henceforth use the terms semimartingales and increment semimartingales interchangeably.

An important example of an increment semimartingale we will often use is a two-sided Brownian motion: a centered Gaussian process $W = (W_t)_{t \in \R}$ such that $W_0 = 0$, and for any $s \in \R$ and $t > 0$, the increment $W_{s + t} - W_s$ is distributed as $\mathcal{N}(0, t)$ and is independent of $\F_s$.

For $t_0 \in \R \cup \{-\infty\}$, we will denote the Itô integral by $\int_{t_0}^\cdot Y_t dX_t$ and the Stratonovich integral by $\int_{t_0}^\cdot Y_t \circ dX_t$. If both $X$ and $Y$ are continuous semimartingales, then we have the relation 
$$
\int_{t_0}^\cdot Y_t \circ dX_t = \int_{t_0}^\cdot Y_t dX_t  + \frac{1}{2} \int_{t_0}^\cdot d\langle X,Y\rangle_t.
$$
We denote by $\odot$ the Hadamard product: for two vectors $\bm{x} = (\bm{x}^1, \ldots, \bm{x}^d) \in \R^d$ and $\bm{y} = (\bm{y}^1, \ldots, \bm{y}^d) \in \R^d$, we define 
$$
\bm{x} \odot \bm{y} := (\bm{x}^i \bm{y}^i)_{i = 1, \ldots, d} \in\R^d.
$$
We write $\bm x \prec \bm y$ if $\bm{x}^i < \bm{y}^i$ for $i = 1, \ldots, d$. We use $\lesssim$ to denote an inequality up to a positive constant, and $\lesssim_a$ to denote an inequality up to a positive constant depending on $a$. {We will also use $\bm{x}^{\min}$ and $\bm{x}^{\max}$ to denote $\min\limits_{i = 1, \ldots, d} \bm{x}^i$ and $\max\limits_{i = 1, \ldots, d} \bm{x}^i$ respectively.}

\subsection{Preliminaries on signatures}\label{subsection:prelimeray}
In this subsection, we recall the necessary notations related to the signatures. For a more detailed introduction of signatures and its properties, we refer to the work of \citet{Chevyrev2016}.

\paragraph{Definition of signature.}
Consider an $\R^d$-valued continuous semimartingale $X = (X_t)_{t \in [0, T]}$ defined on a finite interval $[0, T]$. Its signature over $[s, t] \subset [0, T]$ is nothing else but a collection of \textit{properly ordered} iterated Stratonovich integrals of different orders $n \in \N_0$,
\begin{equation}\label{eq:signature_word}
    \sigX[s, t]^\word{i_1 \cdots i_n} := \int_{s < u_1 < \cdots < u_n < t} \circ d X_{u_1}^{i_1} \cdots \circ d X_{u_n}^{i_n} = \int_s^t \sigX[s, u_n]^\word{i_1 \cdots i_{n-1}} \circ d X_{u_n}^{\word{i_n}},
\end{equation}
where $i_1, \ldots, i_n \in \{1, \ldots, d\}$. The iterated integrals of order $n \in \N_0$ are then the elements of $(\R^d)^{\otimes n}$. Let $\{e_1, \ldots, e_d\}$ denote the canonical basis of $\R^d$. Then, the basis of $(\R^d)^{\otimes n}$ is given by the vectors  
$$
e_{i_1} \otimes \ldots \otimes e_{i_n},\quad i_1, \ldots, i_n \in \{1, \ldots, d\},
$$ 
identified with words $\word{v} = \word{i_1 \ldots i_n}$ of length $|\word{v}| = n$ over the alphabet $\alphabet = \{\word{1}, \ldots, \word{d}\}$. This explains the indexation in \eqref{eq:signature_word}. For the particular case $n = 0$, we set $(\R^d)^{\otimes 0} \cong \R$, and denote the empty word corresponding to its basis vector by $\emptyword$. By convention, we set $ \sigX^\word{\emptyword} \equiv 1$.

This allows us to write the \textit{$n$-th level of signature} $\sigX[s, t]^n = (\sigX[s, t]^{\word{v}})_{|\word{v}| = n} \in (\R^d)^{\otimes n}$ shortly as 
\begin{equation}\label{eq:sig_definition}
    \sigX[s, t]^n := \int_{s < u_1 < \cdots < u_n < t} \circ d X_{u_1} \otimes \cdots \otimes \circ d X_{u_n}.
\end{equation}

The \textit{path signature} of $X$ is then given by a sequence of all signature levels, also called a {tensor sequence}:
\begin{equation}
    \sigX[s, t] := (1, \sigX[s, t]^1, \dots, \sigX[s, t]^n, \dots),
\end{equation}
which is an element of the \textit{extended tensor algebra}:
\begin{equation}
    \eTA := \left\{\bell = (\bell^n)_{n \geq 0} \colon \ \bell^n \in (\R^d)^{\otimes n} \right\}.
\end{equation}
We will also denote $\sigX := \sigX[0, t]$.

The signature plays a similar role to polynomials on path-space. Indeed, in dimension $d=1$, the signature $\sigX[s, t]$ of $X$ is the sequence of monomials $\left( \frac{1}{n!} (X_t - X_s)^n \right)_{n \in \N}$.

\begin{sqexample}
    In the case $d=2$ and $X_t = \widehat{W}_t := (t, W_t)$ where $W$ is a 1-dimensional Brownian motion, the first orders of signature are given by
\begin{equation}\label{eq:sig_example_bm}
    \sig^0 = 1,
    \quad
    \sig^1 =
    \begin{pmatrix}
        t \\
         W_t
    \end{pmatrix},
    \quad
    \sig^2 =
    \begin{pmatrix}
        \frac{t^2}{2!} & \int_0^t s d W_s \\
        \int_0^t W_s ds & \frac{W_t^2}{2!}
    \end{pmatrix}
\end{equation}
\end{sqexample}
\paragraph{Tensor algebra.}
The basis of $\eTA$ is given by all the words $\word{v}$ over the alphabet $\alphabet$, so that each element $\bell \in \eTA$ can be represented as 
\begin{equation}
    \bell = \sum_{n \geq 0} \sum_{|\word{v}| = n} \bell^{\word{v}} \word{v}.
\end{equation}

We also define the \textit{truncated tensor algebra} of order $N \in \N$ by
\begin{equation}
    \tTA{N} := \left\{ \bell \in \eTA \colon \ \bell^n = 0, \ n > N \right\},
\end{equation}
and the \textit{tensor algebra} $\TA := \bigcup_{N \geq 0} \tTA{N}$ as the algebra containing only finite tensor sequences. Similarly, the \textit{truncated signature} of order $N \in \N$ is defined by
\begin{align} \label{def:sig-trunc}
    \sigX[s, t]^{\leq N} := \left( 1, \sigX[s, t]^1, \dots, \sigX[s, t]^N, 0, \dots, 0, \dots \right).
\end{align}

Along with the natural sum and multiplication by constants for the elements of $\eTA$, we define the tensor product of $\bp, \bq \in \eTA$ as
\begin{equation}
    \bp \otimes \bq = \left( \sum_{k = 0}^n \bp^k \otimes\bq^{n - k} \right)_{n \geq 0},
\end{equation}
or, in coordinate form,
\begin{equation}
    (\bp \otimes \bq)^{\word{v}} = \sum_{\substack{\word{u}, \word{w} \\ \word{uw} = \word{v}}} \bp^{\word{u}} \bq^{\word{w}},
\end{equation}
where $\word{uw}$ is the concatenation of the words $\word{u}$ and $\word{w}$, identified with the tensor product of vectors corresponding to $\word{u}$ and $\word{w}$.

We define the bracket $\langle \cdot, \cdot \rangle: \TA \times \eTA \to \R$ by
\begin{equation}
    \langle \bp, \bq \rangle = \sum_{n \geq 0} \sum_{|\word{v}| = n} \bp^{\word{v}} \bq^{\word{v}}.
\end{equation}

\subsection{Exponentially fading memory signature}

We now consider an $\R^d$-valued increment semimartingale $X = (X_t)_{t \in \R}$ and a vector $\blambda = (\blambda^{1}, \ldots, \blambda^{d}) \in \R^d$ with positive entries, i.e., $\blambda \succ 0$. 
We will denote $\Delta := \{(s, t) \in \R^2 \colon\ s \leq t\}$.

To define the EFM signature over the infinite time interval, we introduce exponential damping of the integrators $\circ dX_u^i$ by weights $e^{-\blambda^i(t - u)}$, for some positive constants $\blambda^i > 0$.
\begin{definition} \label{def:sig}
    The {$\blambda$-exponential fading memory} signature of $X$ is defined by  
    \begin{align*}
        \ssigX[s, t]\colon \Omega \times \Delta &\to \eTA \\
        (\omega, s, t) &\mapsto \ssigX[s, t](\omega) := (1, \ssigX[s, t][X][\blambda, 1](\omega), \ldots, \ssigX[s, t][X][\blambda, n](\omega), \ldots),
    \end{align*}
    where
    \begin{equation}\label{eq:fm_sig_def}
        \ssigX[s, t][X][\blambda, n] := \int_{s < u_1 < \cdots < u_n < t} e^{-\blambda(t - u_1)} \odot dX_{u_1} \circ \otimes \cdots \circ \otimes e^{-\blambda(t - u_n)} \odot dX_{u_n}
    \end{equation}
    takes values in $(\R^d)^{\otimes n}$, for $n \geq 0$. We also denote $\ssigX := \ssigX[-\infty, t]$.
\end{definition}

\begin{sqremark} \label{rmk:sig_iteration_def}
    Explicitly, the components of $\X_{s,t}^{\blambda, n}$ are given by
    \begin{equation}
        \ssigX[s, t][X][\blambda, \word{i_1 \cdots i_n}] := \int_{s < u_1 < \cdots < u_n < t} e^{-\blambda^{i_1}(t - u_1)} dX_{u_1}^{i_1} \circ \cdots \circ  e^{-\blambda^{i_n}(t - u_n)} dX_{u_n}^{i_n}.
    \end{equation}
    They can also be expressed in an iterated form:
    \begin{align} \label{eq:def_stat_sig_coef} 
        \sigX[s, t]^{\blambda, \word{i_1 \cdots i_n}} = \int_{s}^t e^{-\blambda^{\word{i_1 \cdots i_n}} (t - u_n)} \sigX[s, u_n]^{\blambda, \word{i_1 \cdots i_{n-1}}} \circ dX_{u_n}^{i_n},
    \end{align}
    where we set
    \begin{equation} \label{eq:lam_v}
        \blambda^{\word{i_1 \cdots i_n}} := \sum_{k=1}^n \blambda^{i_k}.
    \end{equation}
    In particular, we set $\blambda^{\emptyword} = 0$. It is clear that $\blambda^{\word{uv}} = \blambda^{\word{u}} + \blambda^{\word{v}}$ for all words $\word{u}, \word{v}$.
\end{sqremark}

{
While the EFM signature over a finite interval $[s, t]$ is well-defined as an iterated Stratonovich integral \eqref{eq:def_stat_sig_coef}, integration over the infinite interval $(-\infty, t]$ requires additional assumptions on the process $X$.
We first provide several examples of the EFM-signature, assuming that all the improper integrals are well-defined, and then discuss the rigorous construction of the EFM-signature in Subsection~\ref{sect:markovian_lift}. 
}

\begin{sqexample}
    For a one-dimensional process $X_t$ with $d = 1$, its EFM-signature is given by
    \begin{equation}
        \ssigX[t][X][\blambda] = \left(1,\, U_t,\, \frac{U_t^2}{2!},\, \ldots,\, \frac{U_t^n}{n!},\, \ldots\right),
    \end{equation}
    where 
    \begin{equation}\label{eq:OU_def}
        U_t := \int_{-\infty}^t e^{-\blambda^1(t - s)}\, dX_s.
    \end{equation}
    In particular, if $X = W$ is a Brownian motion, its EFM-signature is nothing other than the powers of the corresponding stationary Ornstein--Uhlenbeck process with mean-reversion coefficient $\blambda^1$. In the general case, the process $U$ can be interpreted as a continuous-time version of the exponential moving average of the past increments of $X$.
\end{sqexample}

\begin{sqexample}
    In the case $d = 2$ and $X_t = \widehat{W}_t := (t, W_t)$, the first levels of the signature are given by
    \begin{equation}\label{eq:fm_sig_examle}
        \sig^{\blambda, 0} = 1,
        \quad
        \sig^{\blambda, 1} =
        \begin{pmatrix}
            (\blambda^1)^{-1} \\
            U_t
        \end{pmatrix},
        \quad
        \sig^{\blambda, 2} =
        \begin{pmatrix}
            \frac{(\blambda^1)^{-2}}{2!} & (\blambda^1)^{-1} \int_{-\infty}^t e^{-(\blambda^1 + \blambda^2)(t-s)}\, dW_s \\
            \int_{-\infty}^t e^{-(\blambda^1 + \blambda^2)(t-s)} U_s\, ds & \frac{U_t^2}{2!}
        \end{pmatrix},
    \end{equation}
    where $U = (U_t)_{t \in \R}$ is a stationary Ornstein–Uhlenbeck process defined by
    \begin{equation}
        U_t = \int_{-\infty}^t e^{-\blambda^2(t-s)}\, dW_s.
    \end{equation}
    This is to be compared with the (standard) signature of $\widehat{W}$ given in \eqref{eq:sig_example_bm}.
\end{sqexample}

\begin{sqexample}
The entries $\blambda^i$ of the vector $\blambda$ are hyperparameters that control how quickly the influence of the past of the $i$-th signal component $X^i$ fades, or equivalently, how rapidly the signature elements involving the integrators $dX^i$ revert to their mean. More precisely, the mean-reversion rate of $\sigX^{\blambda, \word{v}}$ is given by $\blambda^{\word{v}}$. Multiple time scales can be explored either by computing higher-order terms of the EFM-signature, as shown in the previous example, or by extending the path dimension {duplicating path components and} assigning them different values of $\blambda^i$. For instance, taking the path $\bar X_t = (X_t, X_t)$ and $\blambda = (\blambda^1, \blambda^2)$, we obtain
\begin{equation}
    \sigX^{\blambda, 0} = 1,
    \quad
    \sigX^{\blambda, 1} =
    \begin{pmatrix}
        U_t^{\blambda^1}\\
        U_t^{\blambda^2}
    \end{pmatrix},
    \quad
    \sigX^{\blambda, 2} =
    \begin{pmatrix}
        \frac{(U_t^{\blambda^1})^2}{2!} & \int_{-\infty}^t e^{-(\blambda^1 + \blambda^2)(t-s)}U_s^{\blambda^1}\circ dX_s \\
        \int_{-\infty}^t e^{-(\blambda^1 + \blambda^2)(t-s)} U_s^{\blambda^2}\circ dX_s & \frac{(U_t^{\blambda^2})^2}{2!}
    \end{pmatrix},
\end{equation}
where $U = (U_t^{\blambda^i})_{t \in \R}$ is given by
\begin{equation}
    U_t^{\blambda^i} = \int_{-\infty}^t e^{-\blambda^i(t-s)}\, dX_s.
\end{equation}
This construction allows for explicit representation of Volterra processes $\int_{-\infty}^t K(t-s)\,dX_s$ with 
\begin{equation}\label{eq:sum_exp}
    K(t) = \sum_{k=1}^n c_k e^{-\blambda^k t}, \quad t \geq 0
\end{equation}
and the polynomials of these processes. Due to the remarkable approximation power of kernels of the form \eqref{eq:sum_exp}, this also opens the door to a tractable approximation of nonlinear functionals of Volterra processes with more general kernels, see, e.g., \citet{CarmonaCoutin} and \citet{abi2019multifactor}.
\end{sqexample}

{
\subsection{Link between the EFM signature and the standard signature}\label{sect:markovian_lift}

In this section, we show that the EFM-signature, considered pointwise (i.e., $T$ by $T$ for $T \in \R$), coincides with the standard signature of a modified path $X^T$. Indeed, fix $T \in \R$ and define the path $X^T = (X_t^T)_{t \leq T}$ by
\begin{equation}\label{eq:volterra_lift}
    dX_t^T = e^{-\blambda(T-t)} \odot dX_t, \quad t \leq T.
\end{equation}
We denote by $(\sigX[s, t]^T)_{s \leq t \leq T}$ the classical signature of $X^T$.

\begin{proposition}\label{Prop:Markovian_lift}
For all $s \leq t \leq T$, we have
\[
    \ssigX[s, T] = \sigX[s, T]^T.
\]
Moreover, if $X$ admits a semimartingale decomposition $X = A + M$, where $A$ is a process of bounded variation and $M$ is a local martingale such that, for all $i = 1, \ldots, d$,
\begin{equation}\label{eq:lift_assumption}
    \int_{-\infty}^T e^{-\blambda^i(T-u)} \, d|A_u^i|
    + \int_{-\infty}^T e^{-2\blambda^i(T-u)} \, d\langle M^i \rangle_u
    < \infty,
\end{equation}
then there exist a filtration $(\F_s^{Z^T})_{s \in [0,1]}$ and a continuous semimartingale $Z^T = (Z_s^T)_{s \in [0,1]}$ with respect to this filtration such that
\begin{equation*}
    \ssigX[T] = \mathbb{Z}_{0,1}^T.
\end{equation*}
\end{proposition}
\begin{proof}
    The first part of the proposition follows immediately from the definition of the EFM-signature.

    To define the EFM-signature over the infinite interval $(-\infty, T]$, we first observe that the process
    \[
        X_t^{T} = \int_{-\infty}^t e^{-\blambda(T-u)} \odot dX_u,
    \]
    is well-defined under assumption \eqref{eq:lift_assumption}, satisfies \eqref{eq:volterra_lift}, and verifies $X_{-\infty}^T = \lim\limits_{t \to -\infty} X_t^{T} = 0$ a.s.
    
    We now define the time-changed process
    \[
        Z_s^T := X_{\log(s) + T}^T,
    \]
    which is a continuous semimartingale on $(\varepsilon, 1]$ with respect to the filtration $\F_s^{Z^T} = \F_{\log s + T}$ for all $\varepsilon \in (0,1)$; see \cite[Chapter V, Proposition 1.4 and Proposition 1.5]{revuz1999ContinuousMartingalesBrownian}. Moreover, $Z_0^T := \lim_{s \to 0} Z_s^T = 0$ a.s., so that $Z^T$ is a well-defined continuous semimartingale on $[0,1]$ satisfying
    \begin{equation}\label{eq:lamperti}
        Z_s^T = \int_0^s r^{\blambda} \odot dX_{\log r + T}.
    \end{equation}
    
    It remains to observe that
    \[
        \ssigX[T] = \sigX[-\infty, T]^T = \mathbb{Z}_{0,1}^T.
    \]
\end{proof}

From now on, we assume that the path $X$ satisfies condition \eqref{eq:lift_assumption} for all $T \in \R$, which guarantees the well-definedness of the EFM-signature.

Proposition~\ref{Prop:Markovian_lift} shows that the EFM-signature over the infinite interval $(-\infty, T]$ can be viewed as a classical signature over the finite interval $[0,1]$. This observation considerably simplifies the proofs below, since many properties of the standard signature are directly inherited by the EFM-signature. For instance, it follows that the EFM-signature $\sigX[t]^{\blambda, \leq N}$, truncated at order $N$, belongs to the standard Lie group $G^N$ corresponding to the Lie algebra $\mathfrak{g}^N$ generated by $\word{1}, \ldots, \word{d}$.

\begin{sqremark}
    The time change \eqref{eq:lamperti}, which allows us to transform the infinite signal $X$ into a finite one, was first studied by \citet{Lamperti62}. Indeed, we observe that the increments of $Z^T$ are precisely the Lamperti transform of the increments of $X$.
\end{sqremark}

It is important to notice here that, although the EFM-signature can be represented pointwise as the classical signature of $X^T$ for each $T \in \R$, it is impossible to find a path $Y = (Y_t)_{t \in \R}$ such that
$
    \ssigX = \mathbb{Y}_t,
$
for all $t \in \R$. This becomes clear when one views the EFM-signature $\ssigX$ dynamically as a function of $t$.}

\subsection{EFM-signature dynamics}

{In this subsection, we show that the EFM-signature satisfies a linear convolution equation analogous to the linear SDE satisfied by the standard signature. Consequently, the EFM-signature process $\sigX[t]^{\blambda, \leq N}$, as a Lie group-valued process, may be viewed as a mean-reverting analogue of the signature transform.
In particular, since the signature of $d$-dimensional Brownian motion $\sigX[][W]$ is the Brownian motion on $G^N$ in the sense of \cite[Definition 1.2]{Baudoin}, the EFM-signature $\sigX[][W]^{\blambda, \leq N}$ plays the role of the Ornstein--Uhlenbeck process on $G^N$. We return to this analogy in Section~\ref{sect:SSig_BM}.}

We start by introducing two operators that are important for understanding the properties of the EFM-signature and for the analysis of its dynamics.

\begin{definition}\label{def:G_oper}
    The operator $\G\colon \eTA \to \eTA$ is defined by
    $$
    \G\colon \bell = \sum_{n \geq 0}\sum_{|\word{v}| = n} \bell^{\word{v}}\word{v} \mapsto \G\bell = \sum_{n \geq 0}\sum_{|\word{v}| = n} \blambda^{\word{v}}\bell^{\word{v}}\word{v},
    $$
    where $\blambda^{\word{v}}$ is given by \eqref{eq:lam_v}.
\end{definition}
In other words, $\G$ is a diagonal operator multiplying each entry of the tensor algebra element by the corresponding mean-reversion coefficient. We note that this operator is not continuous with respect to any norm on $\eTA$.

\begin{definition}\label{def:D_operator}
    For $\blambda \in\R^d$ and $h \in\R$, we define the operator $\D\colon \eTA \to \eTA$ by
    \begin{align}\label{eq:D_def}
        \D\colon
        \bell = \sum_{n \geq 0}\sum_{|\word{v}| = n} \bell^{\word{v}}\word{v} \mapsto \D \bell =  \sum_{n \geq 0}\sum_{|\word{v}| = n} e^{-\blambda^{\word{v}} h}  \bell^{\word{v}}\word{v}.
    \end{align}
\end{definition}
\begin{proposition}\label{prop_D_ppties}
    The following properties hold
    \begin{enumerate}
        \item[(i)] $\D$ is a dilation operator multiplying the $i$-th basis vector by $e^{-\blambda^ih}$.
        \item[(ii)] For all $h_1, h_2 \in \R$, $\D[h_1]\D[h_2] = \D[h_1 + h_2]$, so that $(\D[h])_{h \geq 0}$ form a semigroup.
        \item[(iii)] For all $\blambda_1, \blambda_2 \in \R^d$, $\D[h][\blambda_1]\D[h][\blambda_2] = \D[h][\blambda_1 + \blambda_2]$.
        \item[(iv)] For all $c \in \R$, $\D[ch] = \D[h][c\blambda]$.
        \item[(v)] $\D$ is an operator exponential of $-h\G$, i.e.,
        $\D = e^{-h\G} = \sum_{n\geq0}\dfrac{(-h\G)^n}{n!}$.
        \item[(vi)] For $h \in\R$, $\, \dfrac{d}{dh}\D = -\G \D = -\D \G$. 
        \item[(vii)] For $\bp, \bq \in \eTA$, we have $\D(\bp\otimes\bq) = (\D\bp)\otimes(\D\bq)$.
        \item[(viii)] For $\bp, \bq \in \eTA$, we have $\G(\bp\otimes\bq) = (\G\bp)\otimes\bq + \bp\otimes(\G\bq)$. 
    \end{enumerate}
\end{proposition}
\begin{proof}
    The properties {(i)}--{(vi)} follow immediately from the definition. For the last two properties, recall that
    $$
    (\bp \otimes \bq)^{\word{v}} = \sum_{\substack{\word{u}, \word{w} \\ \word{uw} = \word{v}}} \bp^{\word{u}} \bq^{\word{w}},
    $$
    so that 
    $$
    (\D(\bp \otimes \bq))^{\word{v}} = e^{-\blambda^{\word{v}}h}\sum_{\substack{\word{u}, \word{w} \\ \word{uw} = \word{v}}} \bp^{\word{u}} \bq^{\word{w}} = \sum_{\substack{\word{u}, \word{w} \\ \word{uw} = \word{v}}} (e^{-\blambda^{\word{u}}h}\bp^{\word{u}})(e^{-\blambda^{\word{w}}h}\bq^{\word{w}}) = ((\D\bp) \otimes (\D\bq))^{\word{v}},
    $$
    since $\blambda^{\word{v}} = \blambda^{\word{u}} + \blambda^{\word{w}}$ for all $\word{u}, \word{w}$ such that $\word{uw} = \word{v}$. Similarly,
    $$
    (\G(\bp \otimes \bq))^{\word{v}} = \blambda^{\word{v}}\sum_{\substack{\word{u}, \word{w} \\ \word{uw} = \word{v}}} \bp^{\word{u}} \bq^{\word{w}} = \sum_{\substack{\word{u}, \word{w} \\ \word{uw} = \word{v}}} (\blambda^{\word{u}} \bp^{\word{u}}\bq^{\word{w}} +  \blambda^{\word{w}}\bp^{\word{u}}\bq^{\word{w}}) = ((\G\bp) \otimes \bq)^{\word{v}} + (\bp \otimes (\G\bq))^{\word{v}}.
    $$
\end{proof}
{
\begin{sqremark}
    By Proposition~\ref{prop_D_ppties}, $(\D)_{h \in \R}$ is a semigroup with generator $\G$. Furthermore, Proposition~\ref{prop_D_ppties}\,(vii) implies that $\D$ is a tensor algebra homomorphism; thus, it is uniquely determined by its action on $\ell \in \R^d$, where $\D \ell := e^{-\blambda h}\odot \ell$, and subsequently extended to $(\R^d)^{\otimes n}$ and $\eTA$ via the homomorphism property. In the sequel, we consistently apply $\D$ to elements of $(\R^d)^{\otimes n}$ with this extension in mind. For instance, we write $\D[t-u](\circ dX_u) = e^{-\blambda(t-u)}\odot\circ dX_u$.
\end{sqremark}
}
Proposition~\ref{prop_D_ppties} allows us to rewrite the iterated integrals in the EFM-signature more concisely. Indeed, it follows from \eqref{eq:def_stat_sig_coef} that $\ssigX[]$ can be written as
\begin{equation}\label{eq:sig_as_D}
    \ssigX[s, t] = \int_{s}^t \D[t - u](\ssigX[s, u]\otimes\circ dX_u), \quad -\infty \leq s \leq t.
\end{equation}

The following proposition is a compact formulation of the variation of constants formula written in terms of the operators $\G$ and $\D$.
{
\begin{proposition}\label{prop:variation_of_c}
    {Fix $s \in \R\cup\{-\infty\}$.} Suppose that $\bsigma = (\bsigma_t)_{t \geq s}$ is a continuous $\eTA$-valued semimartingale {with decomposition $\bsigma = \bm A^{\bsigma} + \bm M^{\bsigma}$, where $\bm M^{\bsigma}$ is a local martingale and $\bm A^{\bsigma}$ is a process of bounded variation, such that}
    \begin{equation}\label{eq:var_of_const_integr_cond}
        { \int_{s}^t e^{-\blambda^{\word{v}}(t - u)} |d\bm A^{\bsigma, \word{v}}|_u < \infty}, \qquad \int_{s}^t e^{-2\blambda^{\word{v}}(t - u)} d\langle {\bm M^{\bsigma, \word{v}}} \rangle_u < \infty.
    \end{equation}
    for all words $\word{v}$ and $t\geq s$.
    Then, for all $\bell_s \in\eTA$, the process
    \begin{equation}
        \bell_t = \D[t - s]  \bell_{s} + \int_{s}^t \D[t - u] d\bsigma_u.
    \end{equation}
    solves the equation
    \begin{equation}
        d\bell_t = -\G \bell_t dt + d\bsigma_t, \quad t \geq s.
    \end{equation}
\end{proposition}
\begin{proof}
{
    The condition \eqref{eq:var_of_const_integr_cond} ensures that the process $\bell = (\bell_t)_{t \geq s}$ is well-defined as a $\eTA$-valued semimartingale. Fix $s_0 \in \R$ and observe that 
    \[
    \bell_t = \D[t - s_0]\left(\D[s_0 -s] \bell_{s} + \int_{s}^t \D[s_0 - u] d\bsigma_u\right).
    \]
    Applying Itô's formula element-wise and using Proposition~\ref{prop_D_ppties}\,(vi) yields the desired equation.
}
\end{proof}
}
Proposition~\ref{prop:variation_of_c} combined with \eqref{eq:sig_as_D} yield that the dynamics of the EFM-signature is given by
\begin{equation}\label{eq:sig_sde}
    d\ssigX[s, t] = -\G \ssigX[s, t]\,dt + \ssigX[s, t]\otimes\circ dX_t, \quad t \geq s.
\end{equation}
The second term in \eqref{eq:sig_sde} corresponds to the standard signature dynamics equation, while the first one is the mean-reversion term appearing due to the exponential weights in \eqref{eq:fm_sig_def}.
\begin{sqremark}
    This and all the following $\eTA$-valued equations are understood element-wise. This means that for any truncation order $N$, the truncated equation reduces to a linear multidimensional Stratonovich SDE with coefficients depending only on the elements of $\ssigX[s, t]$ up to level $N$. {Hence, the SDE} admits a unique strong solution.
\end{sqremark}
The EFM-signature allows us to construct a fundamental solution of \eqref{eq:sig_sde}.

\begin{proposition}\label{prop:fundamental_sol}
    Fix $\mathbbm{x}\in \eTA$. The solution to the equation 
    \begin{equation}\label{eq:sig_sde_fund}
    \begin{aligned}
        d\mathbf{\Phi}^{s, \mathbbm{x}}_t &= -\G \mathbf{\Phi}^{s, \mathbbm{x}}_tdt + \mathbf{\Phi}^{s, \mathbbm{x}}_t\otimes\circ dX_t,\quad t \geq s, \\
        \mathbf{\Phi}^{s, \mathbbm{x}}_s &= \mathbbm{x}.
    \end{aligned}
    \end{equation}
    is given by $\mathbf{\Phi}^{s, \mathbbm{x}}_t = (\D[t - s]\mathbbm{x})\otimes\ssigX[s, t]$.
\end{proposition}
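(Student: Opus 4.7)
The plan is a direct verification: set $\mathbf{\Phi}^{s,\mathbbm{x}}_t := (\D[t-s]\mathbbm{x})\otimes\ssigX[s,t]$, check the initial condition and then show that this process satisfies the SDE \eqref{eq:sig_sde_fund}. Uniqueness of the solution is then provided by the remark preceding the statement, since level-by-level \eqref{eq:sig_sde_fund} reduces to a finite-dimensional linear Stratonovich SDE.

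First I would check the initial condition. At $t=s$ we have $\D[0]\mathbbm{x}=\mathbbm{x}$ directly from Definition~\ref{def:D_operator} (all exponentials equal $1$), and $\ssigX[s,s]=(1,0,0,\ldots)$ is the unit of the tensor product since every iterated integral over $(s,s)$ vanishes. Hence $\mathbf{\Phi}^{s,\mathbbm{x}}_s=\mathbbm{x}\otimes(1,0,0,\ldots)=\mathbbm{x}$.

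Next I would differentiate $\mathbf{\Phi}^{s,\mathbbm{x}}$ via the Stratonovich Leibniz rule, applied word-by-word so that each component is a finite sum of products of real semimartingales and the classical product rule applies. The factor $\D[t-s]\mathbbm{x}$ is of bounded variation, so by Proposition~\ref{prop_D_ppties}(vi),
$$
d(\D[t-s]\mathbbm{x}) = -\G(\D[t-s]\mathbbm{x})\,dt,
$$
with no quadratic variation contribution. Combining this with the EFM-signature dynamics \eqref{eq:sig_sde} and using associativity of $\otimes$, I obtain
$$
d\mathbf{\Phi}^{s,\mathbbm{x}}_t = -(\G\D[t-s]\mathbbm{x})\otimes\ssigX[s,t]\,dt + (\D[t-s]\mathbbm{x})\otimes\bigl(-\G\ssigX[s,t]\,dt + \ssigX[s,t]\otimes\circ dX_t\bigr).
$$
Regrouping, the diffusive part becomes $\mathbf{\Phi}^{s,\mathbbm{x}}_t\otimes\circ dX_t$, and for the drift I would invoke the Leibniz-type identity of Proposition~\ref{prop_D_ppties}(viii), which gives
$$
(\G\D[t-s]\mathbbm{x})\otimes\ssigX[s,t] + (\D[t-s]\mathbbm{x})\otimes\G\ssigX[s,t] = \G\bigl((\D[t-s]\mathbbm{x})\otimes\ssigX[s,t]\bigr) = \G\mathbf{\Phi}^{s,\mathbbm{x}}_t.
$$
This yields exactly \eqref{eq:sig_sde_fund}.

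I do not anticipate any real obstacle: the argument is entirely algebraic once Proposition~\ref{prop_D_ppties} is available. The only bookkeeping subtlety is that $\eTA$-valued equations must be read projection by projection, and the Leibniz computation above has to be justified at the level of scalar components before being reassembled into an $\eTA$-identity, using that $\otimes$, $\G$ and $\D$ act compatibly with the word decomposition.
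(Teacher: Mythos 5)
Your proposal is correct and follows essentially the same route as the paper's proof: differentiate $(\D[t-s]\mathbbm{x})\otimes\ssigX[s,t]$ by the product rule, use Proposition~\ref{prop_D_ppties}\,(vi) for the $\D[t-s]\mathbbm{x}$ factor and \eqref{eq:sig_sde} for the signature factor, regroup the drift via Proposition~\ref{prop_D_ppties}\,(viii), and conclude uniqueness from the truncated linear SDE. Your explicit verification of the initial condition and the remark that the bounded-variation factor contributes no cross-variation are welcome additions but do not change the argument.
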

\begin{proof}
    Using Proposition~\ref{prop_D_ppties}\,{(vi)} and \eqref{eq:sig_sde}, we obtain
    \begin{align}
        d((\D[t - s]\mathbbm{x})\otimes\ssigX[s, t]) &= d(\D[t - s]\mathbbm{x})\otimes\ssigX[s, t] + (\D[t - s]\mathbbm{x})\otimes d\ssigX[s, t] \\
        &= -(\G\D[t - s]\mathbbm{x})\otimes\ssigX[s, t]\,dt - (\D[t - s]\mathbbm{x})\otimes \G\ssigX[s, t]\,dt + (\D[t - s]\mathbbm{x})\otimes\ssigX[s, t]\otimes\circ dX_t \\
        &= -\G((\D[t - s]\mathbbm{x})\otimes\ssigX[s, t])dt + ((\D[t - s]\mathbbm{x})\otimes\ssigX[s, t])\otimes\circ dX_t,
    \end{align}
    where we used Proposition~\ref{prop_D_ppties}\,{(viii)} for the last equality. This shows that $(\D[t - s]\mathbbm{x})\otimes\ssigX[s, t]$ satisfies \eqref{eq:sig_sde_fund}. Observing that the system \eqref{eq:sig_sde_fund}, truncated at any order $N \in \mathbb{N}$, admits a unique strong solution as a linear SDE concludes the proof.
\end{proof}

{
\subsection{Fading-memory property of the EFM-signature}\label{sect:well_posed}
}
{The fading memory property, as introduced in \cite*{Boyd1985FadingMA}, refers to the continuity of the transformation operator as a function of the past path $(X_t)_{t \leq T}$ with respect to a weighted uniform metric. In the case of the signature transform, working with the increments of $X$ rather than with $X$ itself, one should identify the appropriate spaces in which $X$ lives, as well as a natural way to introduce a time-weighted metric on these spaces. For continuous semimartingales, these questions can be naturally addressed using tools from rough path theory. Although the EFM-signature over finite intervals can be readily constructed using existing results in rough integration, improper integration requires additional work. In this subsection, we introduce all the definitions and notations from rough path theory that are necessary to state the main results. Their proofs will rely on improper rough convolution and are postponed to Section~\ref{sect:rough_paths}.

To define the fading memory property, one first needs to specify the path space $\mathcal{X}$ together with a weighted metric~$\varrho$, in the sense that the distance between two infinite paths is small if they are close in the recent past. Of course, the space $\mathcal{X}$ depends on the regularity of paths $X$ under consideration. In this section, we consider the case of absolutely continuous paths $X$ and the case of continuous semimartingales.
}

\begin{definition}\label{def:fm_ppty}
    Following \citet{Boyd1985FadingMA}, we say that the EFM-signature has the fading memory property with respect to a weighted metric~$\varrho$ on the path space $(\mathcal{X}, \varrho)$ if, for all $s \le t$ and $n \in \mathbb{N}_0$, the mapping
    $$
    X \in \mathcal{X} \mapsto \sigX[s, t]^{\lambda, n},
    $$
    is continuous with respect to~$\varrho$.
\end{definition}

In words, this means that {if two paths $X, Y \in \mathcal{X}$ are close in the weighted metric $\varrho$}, then the EFM-signatures $\sigX[s, t]^{\lambda, n}$ and $\sigX[s, t][Y]^{\lambda, n}$ will also be close. 

\begin{sqremark}
    {Definition~\ref{def:fm_ppty} is slightly stronger than that introduced by \citet{Boyd1985FadingMA}: for an infinite path $X = (X_t)_{t \leq T}$, the latter only requires continuity of $\sigX[T]^{\lambda, n}$, whereas the former requires it for all pairs $(s, t)$ such that $s \leq t \leq T$.}
\end{sqremark}

We first consider absolutely continuous paths $X = (X_t)_{t \leq T}$ {taking values in $\R^d$} and define the weighted variation norm by
\begin{equation}\label{eq:noem_BV_lambda}
    \|X\|_{\mathrm{BV},\blambda; [s, t]} := \int_{s}^t|\D[t-u] \dot X_u|\,du, \quad \|X\|_{\mathrm{BV},\blambda} := \|X\|_{\mathrm{BV},\blambda; (-\infty, T]}.
\end{equation}
{The path space $\mathcal{X}$ in this case consists of all absolutely continuous paths $X = (X_t)_{t \leq T}$ such that $\|X\|_{\mathrm{BV},\blambda} < \infty$, and the metric $\varrho$ is induced by the norm $\|\cdot\|_{\mathrm{BV},\blambda}$.}

The proposition below establishes the fading memory property of the EFM-signature. {Note that
for continuously differentiable $X$, the fading memory property implies that} each level of the EFM-signature is a fading memory operator for the signal $u_t = \dot X_t$ in the sense of \citet{Boyd1985FadingMA}. Of course, the signature approach admits much rougher input signals, since we pass from $u$ to its integral $X$. This means that $u = \dot X$ may be not defined in terms of \cite{Boyd1985FadingMA}. One can think about a Brownian motion $X = W$, which is almost surely nowhere differentiable; in this case, $u$ corresponds to the white noise, which is, of course, not a continuous process.

\begin{proposition}\label{prop:fm_bv_case}
    For the absolutely continuous path $X = (X_t)_{t\leq T}$ such that $\|X\|_{\mathrm{BV},\blambda} < \infty$, and for all $s \leq t \leq T$, the following bound holds:
    \begin{equation}\label{eq:BV_bound}
        \left|\sigX[s, t]^{\lambda, n}\right| \leq \dfrac{\|X\|_{\mathrm{BV},\blambda; [s, t]}^n}{n!}.
    \end{equation}
    Moreover, the EFM-signature has the fading memory property with respect to the weighted variation distance induced by $\|\cdot\|_{\mathrm{BV},\blambda}$.
\end{proposition}
\begin{proof}
    Repeating the proof of \cite[Lemma 5.1]{Lyons2014RoughPS}, for $n \geq 1$, we obtain 
    \begin{align}
        \left|\sigX[s, t]^{\lambda, n}\right| &= \left|\int_{s < u_1 < \cdots < u_n < t} \D[t-u_1] \dot X_{u_1} \otimes \ldots \otimes \D[t-u_n]\dot X_{u_n}\, du_1 \ldots d{u_n}\right| \\
        &\leq \int_{s < u_1 < \cdots < u_n < t}  \left|\D[t-u_1] \dot X_{u_1}\otimes \ldots \otimes \D[t-u_n] \dot X_{u_n}\right|\, du_1 \ldots d{u_n} \\
        &= \int_{s < u_1 < \cdots < u_n < t}  \prod_{k=1}^n \left|\D[t-u_k] \dot X_{u_k}\right|\, du_1 \ldots d{u_n} \\
        &= \dfrac{1}{n!}\left(\int_{s}^{t} \left|\D[t-u]  \dot X_{u}\right|\, du\right)^n = \dfrac{\|X\|_{\mathrm{BV},\blambda; [s, t]}^n}{n!}.
    \end{align}
    To prove the continuity, we proceed by induction in $n$ and we claim that the continuity is {uniform in $t$} once the absolutely continuous paths $X, Y$ satisfy
    \begin{equation}\label{eq:BV_M_assumption}
        \|X\|_{\mathrm{BV}, \blambda} < M, \quad \|Y\|_{\mathrm{BV}, \blambda} < M.
    \end{equation}
    For $n = 0$, $\sigX[s, t]^{\blambda, n} \equiv 1$, and the continuity follows trivially. Suppose that we proved the uniform continuity for all $\sigX[s, t]^{\blambda, k} \equiv 1, \ k \leq n$, and recall that
    $$
    \sigX[s, t]^{\blambda, n+1} = \int_s^t \D[t - u](\sigX[s, u]^{\blambda, n}\otimes dX_u).
    $$
    for some constant $M > 0$. Then, the difference between two signatures reads 
    \begin{align}
        \sigX[s, t][X]^{\blambda, n+1} - \sigX[s, t][Y]^{\blambda, n+1} = \int_s^t \D[t - u]((\sigX[s, u][X]^{\blambda, n} - \sigX[s, u][Y]^{\blambda, n})\otimes dX_u) + \int_s^t \D[t - u](\sigX[s, u][Y]^{\blambda, n}\otimes d(X_u - Y_u)).
    \end{align}
    {Since $\|\D[T-t]X\|_{\mathrm{BV},\blambda; [s, t]} \leq \|X\|_{\mathrm{BV},\blambda}$, we have $\|X\|_{\mathrm{BV},\blambda; [s, t]} {\lesssim e^{\blambda^{\max}(T-s)}}\|X\|_{\mathrm{BV},\blambda}$ and}
    \begin{align}
        |\sigX[s, t][X]^{\blambda, n+1} - \sigX[s, t][Y]^{\blambda, n+1}| {\lesssim e^{\blambda^{\max}(T-s)}}\left(  \|X\|_{\mathrm{BV}, \blambda} \sup_{u \in [s, t]}|\sigX[s, u][X]^{\blambda, n} - \sigX[s, u][Y]^{\blambda, n}| + \|X - Y\|_{\mathrm{BV}, \blambda}\sup_{u \in [s, t]}|\sigX[s, u][Y]^{\blambda, n}|\right).
    \end{align}
    We note that $\|X\|_{\mathrm{BV}, \blambda}$ and $\sup\limits_{u \in [s, t]}|\sigX[s, u][Y]^{\blambda, n}|$ are bounded thanks to \eqref{eq:BV_M_assumption} and \eqref{eq:BV_bound}, so that $|\sigX[s, t][X]^{\blambda, n+1} - \sigX[s, t][Y]^{\blambda, n+1}| \to 
    0$ {{uniformly in $t$}} as $\|X - Y\|_{\mathrm{BV}, \blambda} \to 0$ by the induction hypothesis. This finishes the proof.
\end{proof}

We will show that the idea of fading memory can be generalized to encompass irregular drivers $X$ thanks to rough path theory, namely, through the notion of the {exponentially weighted rough path} that we introduce below. To proceed, we need to introduce some notations related to Hölder spaces and rough path spaces.

\paragraph{Hölder spaces.} In this subsection, as well as in Section~\ref{sect:rough_paths}, all Hölder functions $f$ take values in $(\R^d)^{\otimes n}$ for some $n \in \N_0$, where $d$ denotes the dimension of the driving path. This guarantees that $\D[]f$ is well-defined as a restriction of the operator $\D[]$ in Definition~\ref{def:D_operator}.  
For continuous functions $A = (A_{u, v})_{s \leq u \leq v \leq t}$, we define the Hölder norm
\begin{equation}\label{eq:holder_seminorm_def}
    \|A\|_{\alpha; [s, t]} := \sup_{\substack{u, v \in [s, t] \\ 0 < |u - v| < 1}} \dfrac{|A_{u, v}|}{|u - v|^{\alpha}}.
\end{equation}
We adopt the standard notations
\begin{equation}
    \delta f_{s,t} := f_t - f_s, \quad \hat\delta f_{s,t} := f_t - \D[t-s]f_s, \quad s \leq t,
\end{equation}
where the operator $\D[t-s]$ is defined by \eqref{eq:D_def}. 

The constraint $0 < |u - v| < 1$ in \eqref{eq:holder_seminorm_def} is added to simplify the further analysis of Hölder functions on infinite time intervals. Notice that this additional constraint does not affect any rough path result since Hölder regularity is always considered as a local property.

Let $\mathcal{C}^\alpha([s, t])$ denote the space of $\alpha$-Hölder paths on $[s, t]$ with finite seminorm
\[
    \|f\|_{\alpha; [s, t]} := \|\delta f\|_{\alpha; [s, t]}.
\]
Similarly, $\hat{\mathcal{C}}^\alpha([s, t])$ denotes the space of paths on $[s, t]$ with finite seminorm
\[
     \|f\|_{\alpha; [s, t]}^\wedge := \|\hat\delta f\|_{\alpha; [s, t]}.
\]

For functions $A= (A_{u, v})_{u \leq v \leq T}$ defined on infinite time horizons $(-\infty, T]$, we propose to use the exponentially weighted Hölder seminorms with coefficients $\brho \in \R^d$, $\brho \succ 0$:
\begin{equation}
     \|A\|_{\alpha, \brho} := \sup_{\substack{u, v \in (-\infty, T] \\ 0 < |u - v| < 1}} \dfrac{|\D[T-u][\brho]A_{u, v}|}{|u - v|^{\alpha}},
\end{equation}
and we denote by ${\mathcal{C}}^{\alpha, \brho}((-\infty, T])$ the space of functions $f$ such that $\|f\|_{\alpha, \brho} := \|\delta f\|_{\alpha, \brho} < \infty$. Similarly, we define $\|f\|_{\alpha, \brho}^\wedge := \|\hat\delta f\|_{\alpha, \brho}$ and denote the corresponding Hölder space by $\hat{\mathcal{C}}^{\alpha, \brho}((-\infty, T])$.

\begin{sqremark}\label{rmk:holder_spaces}
    It is straightforward that the finiteness of $\|f\|_{\alpha, \brho}$ (respectively $\|f\|_{\alpha, \brho}^\wedge$) implies the finiteness of $\|f\|_{\alpha; [s, t]}$ (resp. $\|f\|_{\alpha; [s, t]}^\wedge$) for all $s < t \leq T$. Moreover, if $0 \prec \brho \prec \brho'$, then $\|f\|_{\alpha, \brho'} \leq \|f\|_{\alpha, \brho}$ (resp. $\|f\|_{\alpha, \brho'}^\wedge \leq \|f\|_{\alpha, \brho}^\wedge$) and {${\mathcal{C}}^{\alpha, \brho}((-\infty, T]) \subset {\mathcal{C}}^{\alpha, \brho'}((-\infty, T])$ (resp. $\hat{\mathcal{C}}^{\alpha, \brho}((-\infty, T]) \subset \hat{\mathcal{C}}^{\alpha, \brho'}((-\infty, T])$).}
\end{sqremark}

The following lemma clarifies the relationship between the weighted and unweighted norms.

\begin{lemma}\label{lem:holder_weighted}
    For $f \in {\mathcal{C}}^{\alpha, \brho}((-\infty, T])$, 
    \begin{equation}
        \|f\|_{\alpha, \brho} = \sup_{R \leq T}\left(\|\D[T-R][\brho]f\|_{\alpha; [R, T]}\right).
    \end{equation}
\end{lemma}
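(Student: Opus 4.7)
The plan is to unfold the two sides using their definitions and show each is bounded by the other. Writing $g^{R} := \D[T-R][\brho] f$, the right-hand side reads
\[
\sup_{R \leq T}\ \sup_{\substack{u,v \in [R,T]\\ 0 < |u-v|<1}} \frac{|\D[T-R][\brho](f_v - f_u)|}{|u-v|^\alpha},
\]
while the left-hand side is the same supremum with $R$ replaced by $u \wedge v$ inside the weight, and with $u,v$ ranging in all of $(-\infty, T]$.

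For the inequality $\|f\|_{\alpha,\brho} \leq \sup_R \|\D[T-R][\brho] f\|_{\alpha;[R,T]}$, I would fix $u,v \in (-\infty, T]$ with $0 < |u-v| < 1$ and simply choose $R = u \wedge v$. Then $u,v \in [R,T]$, and the quotient inside the left-hand supremum coincides exactly with a quotient appearing inside the right-hand supremum, so taking the supremum over $u,v$ yields the desired bound.

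For the reverse inequality, fix $R \leq T$ and $u,v \in [R,T]$ with $u \leq v$ (by symmetry) and $0 < v-u < 1$. Using the semigroup identity from Proposition~\ref{prop_D_ppties}\,(ii),
\[
\D[T-R][\brho] = \D[u-R][\brho]\, \D[T-u][\brho].
\]
The key observation, which I would state as a small lemma or inline remark, is that for $h \geq 0$ the operator $\D[h][\brho]$ acts coordinate-wise by multiplication by factors $e^{-\rho^{\word{v}} h} \in (0,1]$ and is therefore a contraction on each finite tensor level with respect to any reasonable (e.g.\ $\ell^p$ or Euclidean) norm: $|\D[h][\brho] x| \leq |x|$. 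Applying this with $h = u - R \geq 0$ gives
\[
|\D[T-R][\brho] (f_v - f_u)| \leq |\D[T-u][\brho] (f_v - f_u)|,
\]
and dividing by $|u-v|^\alpha$ shows the quotient is bounded by $\|f\|_{\alpha,\brho}$. Taking the supremum over $u,v \in [R,T]$ and then over $R$ concludes.

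There is no real obstacle; the only point worth flagging is the contraction property of $\D[h][\brho]$ for $h \geq 0$, which is implicit in the coordinate-wise definition \eqref{eq:D_def} but should be made explicit since the proof relies on it. Everything else is a direct manipulation of the definitions.
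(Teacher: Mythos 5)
Your proof is correct and follows essentially the same route as the paper's: one direction by choosing $R = u$ so the two quotients coincide, the other by the semigroup factorization $\D[T-R][\brho] = \D[u-R][\brho]\,\D[T-u][\brho]$ together with the fact that $\D[h][\brho]$ is a coordinate-wise contraction for $h \geq 0$ (which the paper uses implicitly in the inequality $|\D[T-R][\brho](\delta f_{s,t})| \leq |\D[T-s][\brho](\delta f_{s,t})|$; you are right that it deserves to be stated). No gaps.
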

\begin{proof}
    Denote $C := \sup\limits_{R \leq T}\left(\|\D[T-R][\brho](\delta f)\|_{\alpha; [R, T]}\right)$. For all $s, t \in [R, T]$ such that $0 < |t - s| < 1$, we have
    $$
    |\D[T-R][\brho](\delta f_{s, t})| \leq |\D[T-s][\brho](\delta f_{s, t})| \leq \|f\|_{\alpha, \brho}|t-s|^{\alpha},
    $$
    so that
    $
    \|\D[T-R][\brho] f\|_{\alpha; [R, T]} \leq \|f\|_{\alpha, \brho},
    $
    and hence $C \leq \|f\|_{\alpha, \brho}$.

    Conversely, for all $R \leq T$ and $t \in [R, T]$ such that $0 < |t - R| < 1$,
    $$
    |\D[T-R][\brho](\hat\delta f_{R, t})| \leq \|\D[T-R][\brho] f\|_{\alpha; [R, T]}|t - R|^\alpha \leq C|t - R|^\alpha
    $$
    Dividing by $|t - R|^\alpha$ and taking the supremum over $R$ and $t$ yields $\|f\|_{\alpha, \brho} \leq C$.
\end{proof}
It is clear that the same result holds for the ``hatted'' norms $\|\cdot\|_{\alpha, \brho}^\wedge$.

However, the finiteness of $\|f\|_{\alpha, \brho}^\wedge$ appears to be insufficient to properly define the EFM-signature. One should also require boundedness with exponential weight, i.e., the finiteness of the norm
\begin{equation}
    \|f\|_{\infty, \brho} := \sup_{t \in (-\infty, T]} \left|\D[T - t][\brho] f_t\right|.
\end{equation}
We denote the space of weighted bounded Hölder functions by
\begin{equation}\label{eq:hat_C_b_alpha_rho}
    \hat{\mathcal{C}}^{\alpha, \brho}_b((-\infty, T]) := \left\{ f \in \hat{\mathcal{C}}^{\alpha, \brho}((-\infty, T]) \colon\ \|f\|_{\alpha, \brho}^\wedge + \|f\|_{\infty, \brho} < \infty \right\}.
\end{equation}

{
\begin{sqremark}
The Hölder spaces introduced above coincide with their ``hatted'' counterparts: 
\begin{equation}
    \hat{\mathcal{C}}^{\alpha, \brho}([s, t]) = {\mathcal{C}}^{\alpha, \brho}((s, t]), \quad \hat{\mathcal{C}}^{\alpha, \brho}((-\infty, T]) = \mathcal{C}^{\alpha, \brho}((-\infty, T]).
\end{equation}
Indeed, since 
$
\hat \delta f_{s, t} = \delta f_{s, t} + (\mathrm{Id} - \D[t-s])f_s,
$ 
and thanks to the smoothness of the operator $\D[t-s]$, we have
\begin{equation}
    \|f\|_{\alpha; [s, t]} - C_{\blambda}\|f\|_{\infty; [s, t]} \leq \|f\|_{\alpha; [s, t]}^\wedge \leq \|f\|_{\alpha; [s, t]} + C_{\blambda}\|f\|_{\infty; [s, t]},
\end{equation}
and a similar inequality holds for $\|\cdot\|_{\alpha, \brho}$ and $\|\cdot\|_{\alpha, \brho}^\wedge$. However, these seminorms are not equivalent, and the latter yields more natural bounds, which are proved in Section~\ref{sect:rough_paths}. For this reason, we retain both spaces to emphasize which seminorm is being controlled.
\end{sqremark}
}

\paragraph{Rough path spaces.}  
We recall the definition of the standard $\alpha$-Hölder rough path for $\alpha \in \left(\frac{1}{3}, \frac{1}{2}\right]$ over a finite interval. A more comprehensive introduction to rough path theory can be found in \citet{friz2020course}.

\begin{definition}
    For a finite time interval $[R, T]$ and $\alpha \in \left(\frac{1}{3}, \frac{1}{2}\right]$, the space $\mathscr{C}^\alpha([R, T], \R^d)$ of $\alpha$-Hölder rough paths over $\R^d$ consists of pairs $\mathbf{X} = (\mathbb{X}^1, \mathbb{X}^2)$ satisfying 
    \begin{equation}
        \|\mathbb{X}^1\|_{\alpha; [R, T]} < \infty, \quad
        \|\mathbb{X}^2\|_{2\alpha; [R, T]} < \infty,
    \end{equation}
    as well as the Chen's identity
    \begin{equation}\label{eq:chen_standard}
        \mathbb{X}^2_{s, t} = \mathbb{X}^2_{s, u} + \mathbb{X}^2_{u, t} + \mathbb{X}^1_{s, u} \otimes \mathbb{X}^1_{u, t}, \quad R \leq s \leq u \leq t \leq T
    \end{equation}
    We denote
    \begin{equation}
         \|\mathbf{X}\|_{\alpha; [R, T]} := \|\mathbb{X}^1\|_{\alpha; [R, T]} + \|\mathbb{X}^2\|_{2\alpha; [R, T]}.
    \end{equation}
\end{definition}

The natural extension of this definition to paths defined on infinite time intervals $(-\infty, T]$ consists in replacing the Hölder space and norm conditions by their weighted counterparts.

\begin{definition}[Exponentially weighted rough path]
    For $\alpha \in \left(\frac{1}{3}, \frac{1}{2}\right]$ and $\brho \in \R^d$ with $\brho \succ 0$, the space $\mathscr{C}^{\alpha, \brho}((-\infty, T], \R^d)$ of $(\alpha, \brho)$-Hölder rough paths over $\R^d$ consists of pairs $\mathbf{X} = (\mathbb{X}^1, \mathbb{X}^2)$ satisfying
    \begin{equation}
        \|\mathbb{X}^1\|_{\alpha, \brho} < \infty, \quad
        \|\mathbb{X}^2\|_{2\alpha, \brho} < \infty,
    \end{equation}
    and Chen's identity \eqref{eq:chen_standard} for all $s \leq u \leq t \leq T$. We also define the norm
    \begin{equation}
         \|\mathbf{X}\|_{\alpha, \brho} := \|\mathbb{X}^1\|_{\alpha, \brho} + \|\mathbb{X}^2\|_{2\alpha, \brho}.
    \end{equation}
\end{definition}

It follows from Remark~\ref{rmk:holder_spaces} that the restriction of a rough path $\mathbf{X} \in \mathscr{C}^{\alpha, \brho}((-\infty, T], \R^d)$ to $[R, T]$ lies in $\mathscr{C}^\alpha([R, T], \R^d)$ for all $R < T$. Moreover, for all $\brho'$ such that $0 \prec \brho \prec \brho'$, we have the inclusion
\[
    \mathscr{C}^{\alpha, \brho}((-\infty, T], \R^d) \subset \mathscr{C}^{\alpha, \brho'}((-\infty, T], \R^d).
\]

For $\mathbf{X}, \mathbf{Y} \in \mathscr{C}^{\alpha, \brho}((-\infty, T], \R^d)$, we define the exponentially weighted rough path distance by
\begin{equation}\label{eq:fm_rp_distance}
    \varrho_{\alpha, \brho}(\mathbf{X}, \mathbf{Y}) := \|\mathbb{X}^1 - \mathbb{Y}^1\|_{\alpha, \brho} + \|\mathbb{X}^2 - \mathbb{Y}^2\|_{2\alpha, \brho},
\end{equation}
which induces the exponentially weighted rough path topology on $\mathscr{C}^{\alpha, \brho}((-\infty, T], \R^d)$.

Our main result concerning the existence of the EFM-signature {as a rough convolution} states that the only condition for the EFM-signature with parameters $\blambda$ to be well-defined as an element of $\hat{\mathcal{C}}^{\alpha, \brho}_b((-\infty, T])$ is that the rough path lift $\mathbf{X}$ belongs to $\mathscr{C}^{\alpha, \brho}((-\infty, T], \R^d)$ for some $\brho$ such that $0 \prec \brho \prec \blambda$.

\begin{theorem}\label{thm:fm_sig_existence}
    Fix $T \in \R$. If $\mathbf{X} \in \mathscr{C}^{\alpha, \brho}((-\infty, T])$ for some $\alpha \in \left(\frac{1}{3}, \frac{1}{2}\right]$ and $0 \prec \brho \prec \blambda$, then the EFM-signature $\ssigX[]$ given by \eqref{eq:fm_sig_def} is well-defined. Moreover, for all $s \in [-\infty, T]$ and all $n \geq 0$, we have 
    \[
        \mathbb{X}^{\blambda, n}_{s, \cdot} \in \hat{\mathcal{C}}^{\alpha, \brho}_b((s, T]).
    \]
\end{theorem}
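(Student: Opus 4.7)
The plan is to construct the EFM-signature level by level, by induction on $n$, with the base case $n = 0$ trivial: $\mathbb{X}^{\blambda, 0}_{s, \cdot} \equiv 1$ lies in $\hat{\mathcal{C}}^{\alpha, \brho}_b$ because $\lambda^{\emptyword} = 0$ makes $\D$ act as the identity on the scalar component. For the inductive step I would interpret the iterated formula
$$
\mathbb{X}^{\blambda, n}_{s, t} = \int_s^t \D[t-u]\bigl(\mathbb{X}^{\blambda, n-1}_{s, u} \otimes \circ d X_u\bigr)
$$
as a rough integral against the lift $\mathbf{X}$. For finite $s = R \in \mathbb{R}$, this fits into the classical Lyons--Gubinelli framework: by Lemma~\ref{lem:holder_weighted} and the induction hypothesis, the integrand is an $\alpha$-Hölder controlled path on $[R, T]$, and $\mathbf{X}|_{[R, T]} \in \mathscr{C}^\alpha([R, T])$, so standard sewing constructs $\mathbb{X}^{\blambda, n}_{R, t}$ with the usual $\alpha$-Hölder bounds on that finite window.

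The second step is the improper limit $R \to -\infty$. I would show that $(\mathbb{X}^{\blambda, n}_{R, \cdot})_R$ is Cauchy in the weighted norm $\|\cdot\|^\wedge_{\alpha, \brho} + \|\cdot\|_{\infty, \brho}$. Decomposing the difference $\mathbb{X}^{\blambda, n}_{R', t} - \mathbb{X}^{\blambda, n}_{R, t}$ via a Chen-type identity produces a residual $\D[t-R'] \mathbb{X}^{\blambda, n}_{R, R'}$ plus a rough integral over $[R, R']$ involving lower levels. The strict gap $0 \prec \brho \prec \blambda$ then supplies, at each word $\word{v}$ of length $n$, an exponential factor $e^{-(\lambda^{\word v} - \rho^{\word v})(T - R)}$ that strictly dominates the growth of $\mathbf{X}$ inherited from $\|\mathbf{X}\|_{\alpha, \brho} < \infty$. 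This yields the Cauchy property and hence a well-defined improper limit $\mathbb{X}^{\blambda, n}_{-\infty, t}$, together with the $\|\cdot\|_{\infty, \brho}$ bound. This is where the bulk of the analytic work sits, and where a dedicated improper rough-convolution theory --- presumably the one to be developed in Section~\ref{sect:rough_paths} --- is needed to justify the sewing on the infinite interval.

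The main obstacle is promoting this to membership in $\hat{\mathcal{C}}^{\alpha, \brho}_b$, since the target space measures increments via $\hat\delta f_{u, t} = f_t - \D[t - u] f_u$ rather than via the ordinary $\delta f$. To handle this, I would exploit Proposition~\ref{prop_D_ppties}\,(vii), which says that $\D$ is a tensor-algebra morphism, to derive a modified Chen identity schematically of the form
$$
\mathbb{X}^{\blambda, n}_{s, t} = \D[t - u]\, \mathbb{X}^{\blambda, n}_{s, u} + \sum_{k=1}^{n} \D[t - u]\, \mathbb{X}^{\blambda, n - k}_{s, u} \otimes \mathbb{X}^{\blambda, k}_{u, t}.
$$
The hat-increment is then exactly the $\sum_{k\geq 1}$ tail, each summand of which contains a factor $\mathbb{X}^{\blambda, k}_{u, t}$ that, over the finite window $[u, t]$ of length less than $1$, enjoys the standard $|t - u|^{k\alpha}$ rough-path bound (and in particular an $|t - u|^\alpha$ bound). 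Propagating the weight $\D[T - u][\brho]$ through the tensor-algebra morphism $\D$ and using the induction hypothesis on the lower-level factors $\mathbb{X}^{\blambda, n-k}_{s, u}$ then yields the desired $\alpha$-Hölder bound in the $\|\cdot\|^\wedge_{\alpha, \brho}$ norm, closing the induction.
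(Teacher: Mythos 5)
Your overall strategy coincides with the paper's: each level is built recursively as a rough convolution, the finite-$s$ case is handled by the mild-sewing/rough-convolution framework (Corollary~\ref{corollary:fm_sig_s_fin}), and the improper limit $R \to -\infty$ rests on a Cauchy argument powered by the strict gap $0 \prec \brho \prec \blambda$ (Theorem~\ref{thm:improper_rough} and Corollary~\ref{corollary:fm_sig_s_inf}). There is, however, one genuine gap: the induction hypothesis you carry is too weak to close the induction. To define level $n+1$ on $(-\infty,T]$ as an improper rough convolution, you need the pair $(\mathbb{X}^{\blambda,n}_{\cdot},\mathbb{X}^{\blambda,n-1}_{\cdot})$ to be a \emph{weighted controlled path}, i.e.\ an element of $\mathscr{D}_{\D[], X}^{2\alpha, \brho}((-\infty,T])$ with a remainder controlled in $\mathcal{C}^{2\alpha,\brho}$ uniformly over the whole half-line. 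Membership of $\mathbb{X}^{\blambda,n}_{\cdot}$ in $\hat{\mathcal{C}}^{\alpha,\brho}_b$ alone does not supply this $2\alpha$ remainder estimate, and without it neither the local bound \eqref{eq:conv_int_control} nor your Cauchy estimate can be run with constants uniform in $R$. This is precisely why the paper's Theorem~\ref{thm:improper_rough} includes $(Z,Y)\in\mathscr{D}_{\D[], X}^{2\alpha, \brho}$ together with the quantitative bound \eqref{eq:improper_bound_2} as part of its conclusion. The fix is routine—strengthen the induction hypothesis to carry the controlled-path structure—but it must be stated, since it is the statement that propagates.

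Two smaller points. First, your detour through a modified Chen identity to obtain the $\|\cdot\|^{\wedge}_{\alpha,\brho}$ bound is unnecessary: by the semigroup property of $\D[]$, the hat-increment of the improper convolution is itself the local convolution integral, $\hat\delta Z_{s,t}=\int_s^t\D[t-u](Y_u\,d\mathbf{X}_u)$, so \eqref{eq:conv_int_control} applies directly. Your route also forces you to bound the unweighted factors $\mathbb{X}^{\blambda,k}_{u,t}$ by powers of the local Hölder norm $\|\mathbf{X}\|_{\alpha;[u,t]}$, which grows as $u\to-\infty$ and must be reabsorbed by distributing the weight $\D[T-u][\brho]$ across the tensor factors—doable via Proposition~\ref{prop_D_ppties}\,(vii), but an extra layer of bookkeeping you would need to make explicit. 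Second, in your Cauchy argument both the integrand $\mathbb{X}^{\blambda,n}_{R,\cdot}$ and the integration domain depend on $R$; the paper sidesteps this by first constructing level $n$ started at $-\infty$ and only then sending the lower limit of the level-$(n+1)$ integral to $-\infty$, which keeps the integrand fixed and the estimate cleaner.
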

\begin{proof}
    For the proof, we refer to Section~\ref{sect:rough_paths}. The statement follows from Corollary~\ref{corollary:fm_sig_s_fin} for finite $s$ and from Corollary~\ref{corollary:fm_sig_s_inf} for $s = -\infty$.
\end{proof}

\begin{sqremark}
    For standard rough path theory on finite intervals, {the rough integral against a semimartingale $X$ recovers either the Itô or Stratonovich integral, provided $X$ is enhanced with the corresponding lift:}
\[
\begin{aligned}
\mathbb{X}^{2, \mathrm{It\hat{o}}}_{s,t} = \int_s^t X_{s, u} \otimes dX_u,\quad 
\mathbb{X}^{2, \mathrm{Strato}}_{s,t} = \int_s^t X_{s, u} \otimes \circ dX_u,
\end{aligned}
\]
{see \citep[Proposition~5.1 and Corollary~5.2]{friz2020course}.}
This result can be readily extended to our setting on infinite intervals by taking the limit as in \eqref{eq:improper_conv}. In particular, we can define the EFM-signature of a semimartingale in terms of either the Itô or Stratonovich integral. Throughout this paper, we will adopt the Stratonovich lift for continuous semimartingales.
\end{sqremark}
    
{The fading memory property can now be formulated for the path space $\mathcal{X} = \mathscr{C}^{\alpha, \brho}((-\infty, T])$ together with the weighted rough path distance $\varrho = \varrho_{\alpha, \brho}$ defined in \eqref{eq:fm_rp_distance}}, providing a rough path generalization of the results of \citet{Boyd1985FadingMA} and Proposition~\ref{prop:fm_bv_case}.

\begin{theorem}[Fading memory property]\label{thm:rough_fm}
    Let $\mathbf{X}, \mathbf{Y} \in \mathscr{C}^{\alpha, \brho}((-\infty, T])$ for  some $\alpha \in \left(\frac{1}{3}, \frac{1}{2}\right]$ and $0 \prec \brho \prec \blambda$. 
    Then, the EFM-signature $\ssigX[]$ has the fading memory propery w.r.t. the exponentially weighted rough path distance $\varrho_{\alpha, \brho}$ given by \eqref{eq:fm_rp_distance}, that is, for all $-\infty \leq s \leq t \leq T$ and $n \geq 0$,
    \[
        |\mathbb{X}_{s, t}^{\blambda, n} - \mathbb{Y}_{s, t}^{\blambda, n}| \to 0 \quad \text{as} \quad \varrho_{\alpha, \brho}(\mathbf{X}, \mathbf{Y}) \to 0.
    \]
\end{theorem}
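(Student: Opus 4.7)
The plan is to mirror the inductive scheme used for the bounded variation case in Proposition~\ref{prop:fm_bv_case}, replacing the elementary variation estimates by the stability bounds for the exponentially weighted rough integral that will be developed in Section~\ref{sect:rough_paths}. I would proceed by induction on $n$ using the iterated representation \eqref{eq:def_stat_sig_coef}. The base case $n = 0$ is trivial since $\mathbb{X}_{s, t}^{\blambda, 0} = \mathbb{Y}_{s, t}^{\blambda, 0} = 1$.

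For the inductive step, assuming the result at level $n$, the natural move is to decompose the difference by bilinearity of the (weighted) rough integral:
\[
    \mathbb{X}_{s, t}^{\blambda, n+1} - \mathbb{Y}_{s, t}^{\blambda, n+1} = \int_s^t \D[t-u]\bigl((\mathbb{X}_{s, u}^{\blambda, n} - \mathbb{Y}_{s, u}^{\blambda, n}) \otimes d\mathbf{X}_u\bigr) + \int_s^t \D[t-u]\bigl(\mathbb{Y}_{s, u}^{\blambda, n} \otimes d(\mathbf{X}_u - \mathbf{Y}_u)\bigr),
\]
and to estimate each piece separately. The second term is to be controlled by the continuity of the weighted rough integral with respect to the integrator, expressed by $\varrho_{\alpha, \brho}(\mathbf{X}, \mathbf{Y})$, together with the uniform weighted Hölder and sup bounds on $\mathbb{Y}_{s, \cdot}^{\blambda, n}$ provided by Theorem~\ref{thm:fm_sig_existence} (which places the signature levels in $\hat{\mathcal{C}}_{b}^{\alpha, \brho}$). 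The first term is handled by continuity in the integrand, combining the inductive hypothesis on $\mathbb{X}^{\blambda, n} - \mathbb{Y}^{\blambda, n}$ with the uniform weighted rough norm of $\mathbf{X}$.

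The main obstacle will be to upgrade the existence-type a priori bounds behind Theorem~\ref{thm:fm_sig_existence} to genuine Lipschitz estimates in the rough path, uniformly in $s \in [-\infty, T]$ and $t \in [s, T]$. Concretely, one must show that the sewing of the germ $\D[t-u]\bigl(\mathbb{X}_{s, u}^{\blambda, n} \otimes d\mathbf{X}_u\bigr)$ is Lipschitz with respect to the weighted seminorms $\|\cdot\|_{\alpha, \brho}^{\wedge}$, $\|\cdot\|_{2\alpha, \brho}^{\wedge}$ and $\|\cdot\|_{\infty, \brho}$ simultaneously, so that the geometric decay induced by the strict ordering $\brho \prec \blambda$ compensates any growth of the constants as $s \to -\infty$. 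This is the core analytical work and will be carried out in Section~\ref{sect:rough_paths} as the natural Lipschitz companion of the improper rough convolution bounds used for existence. Once these weighted stability estimates are available, the induction closes and sending $\varrho_{\alpha, \brho}(\mathbf{X}, \mathbf{Y}) \to 0$ in the resulting inequality immediately yields $|\mathbb{X}_{s, t}^{\blambda, n} - \mathbb{Y}_{s, t}^{\blambda, n}| \to 0$ for every level $n$ and every $-\infty \leq s \leq t \leq T$, as required.
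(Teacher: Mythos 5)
Your high-level plan (induction on $n$, splitting the difference of the two iterated integrals) is sensible, but it is not the paper's argument, and as written it leaves precisely the hard step unproved. You explicitly defer ``the core analytical work'' -- a Lipschitz estimate for the improper rough convolution with respect to the weighted seminorms $\|\cdot\|_{\alpha,\brho}^{\wedge}$, $\|\cdot\|_{2\alpha,\brho}^{\wedge}$, $\|\cdot\|_{\infty,\brho}$, uniformly over the infinite past -- to a lemma that neither you nor the paper actually proves. There is also a structural issue with your bilinear split: the second integral $\int_s^t \D[t-u]\bigl(\mathbb{Y}_{s,u}^{\blambda,n}\otimes d(\mathbf{X}_u-\mathbf{Y}_u)\bigr)$ has an integrand controlled by $\mathbf{Y}$ but is integrated against an increment of two \emph{different} rough paths, so it is not a well-defined rough convolution in the framework of Theorem~\ref{thm:improper_rough}; stability arguments of this kind must instead estimate the distance between two controlled paths over two different reference rough paths simultaneously (as in the cited Lemma~3.13 of Gerasimovi\v{c}s--Hairer), and your induction hypothesis would have to be upgraded from pointwise convergence of $\mathbb{X}_{s,t}^{\blambda,n}-\mathbb{Y}_{s,t}^{\blambda,n}$ to convergence in those controlled-path norms. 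None of this is carried out.

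The paper avoids building a weighted Lipschitz theory on $(-\infty,T]$ altogether. For finite $s$ it simply invokes the existing finite-interval stability result. For $s=-\infty$ it uses the tail bound \eqref{eq:improper_convergence_control} -- already available from the existence theorem -- to show that the improper integral converges \emph{uniformly} over a ball $\|\mathbf{X}\|_{\alpha,\brho}\leq M$, so the contribution of $(-\infty,R]$ can be made smaller than $\varepsilon/4$ for both paths by choosing $R$ negative enough. On the remaining finite window it applies Chen's identity, $\int_R^t\D[t-u](\mathbb{X}_u^{\blambda,n}d\mathbf{X}_u)=\sum_{k=0}^n(\D[t-R]\mathbb{X}_R^{\blambda,k})\otimes\mathbb{X}_{R,t}^{\blambda,n-k+1}$, and controls the difference factor by factor: the finite-interval factors via the classical stability lemma, and the factors $\D[t-R](\mathbb{X}_R^{\blambda,k}-\mathbb{Y}_R^{\blambda,k})$ via the induction hypothesis at the single time $R$. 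This truncate-then-Chen decomposition is the idea your proposal is missing; without it (or without actually proving the uniform weighted Lipschitz estimate you postulate), the proof is incomplete.
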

\begin{proof}
    The proof is given in Subsection~\ref{sect:rough_fm_proof}.
\end{proof}

\subsection{EFM-signature as a stationary process}
By definition, the EFM-signature is time-invariant in the sense of \eqref{eq:time_invariance}. This result is formalized in the following proposition.

\begin{proposition}
    If $X = (X_t)_{t \in \R}$ is a continuous semimartingale and $Y_\cdot = X_{\cdot + h}$ for some $h \in \R$, then
    \begin{equation}\label{eq:fm_sig_TI}
        \ssigX[t][Y] = \ssigX[t + h][X].
    \end{equation}
\end{proposition}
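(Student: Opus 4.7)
The plan is to verify the identity level by level: for every word $\word{v} = \word{i_1\cdots i_n}$ over $\alphabet$, I will show
$$\ssigX[t][Y][\blambda,\word{v}] \;=\; \ssigX[t+h][X][\blambda,\word{v}],$$
and then the equality in $\eTA$ follows immediately by summing. Since the component indices $i_k$ play no essential role beyond selecting which coordinate of $X$ and which entry $\lambda^{i_k}$ is used, it suffices to track the time variables.

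The core step is a simple time-shift change of variables in the iterated integral. In the definition
$$\ssigX[t][Y][\blambda,\word{v}] \;=\; \int_{-\infty < u_1 < \cdots < u_n < t} e^{-\lambda^{i_1}(t-u_1)}\,dY^{i_1}_{u_1}\,\circ\cdots\circ\, e^{-\lambda^{i_n}(t-u_n)}\,dY^{i_n}_{u_n},$$
I substitute $v_k = u_k + h$ for $k=1,\dots,n$. Under this substitution the ordered simplex $\{-\infty<u_1<\cdots<u_n<t\}$ is mapped bijectively onto $\{-\infty<v_1<\cdots<v_n<t+h\}$, the damping weight becomes $e^{-\lambda^{i_k}(t-u_k)} = e^{-\lambda^{i_k}((t+h)-v_k)}$, and, since $Y_u = X_{u+h}$, the differential satisfies $dY^{i_k}_{u_k} = dX^{i_k}_{v_k}$. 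Substituting into the integral yields exactly the definition of $\ssigX[t+h][X][\blambda,\word{v}]$.

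The only genuine subtlety is justifying that time translation commutes with the (improper, iterated) Stratonovich integral in the increment-semimartingale framework of \citet*{basseoconnor2010martingaletypeprocessesindexedreal,OConnor2014}. For a single integral this is the fact that if $Y = X_{\cdot+h}$, then $Y$ is itself an increment semimartingale with respect to the shifted filtration $(\F_{t+h})_{t\in\R}$, and the stochastic integral transforms accordingly — this is a direct consequence of the construction of the integral from localized Riemann-sum limits. For the iterated integrals in \eqref{eq:fm_sig_def}, the statement then follows by induction on $n$ using the recursive representation \eqref{eq:def_stat_sig_coef}: assuming the identity at level $n-1$, one applies the single-integral shift to
$$\ssigX[t][Y][\blambda,\word{i_1\cdots i_n}] = \int_{-\infty}^t e^{-\lambda^{\word{i_1\cdots i_n}}(t-u_n)}\,\ssigX[u_n][Y][\blambda,\word{i_1\cdots i_{n-1}}]\,\circ dY^{i_n}_{u_n}$$
and invokes the induction hypothesis on the integrand.

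The only point I expect to require care is the improper lower limit $-\infty$: the substitution $v = u+h$ is straightforward on $(-T,t]$ for any finite $T$, but passing to $T\to\infty$ needs the integrals to be well-defined as limits, which is precisely what Section~\ref{sect:well_posed} (together with Theorem~\ref{thm:fm_sig_existence}) guarantees. Once convergence is in hand, the shift commutes with the limit and the identity \eqref{eq:fm_sig_TI} follows.
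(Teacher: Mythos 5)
Your proof is correct and follows essentially the same route as the paper, whose entire argument is the change of variables $v_i = u_i + h$ in \eqref{eq:fm_sig_def}; your additional remarks on the increment-semimartingale framework and the improper lower limit are careful elaborations of details the paper leaves implicit.
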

\begin{proof}
{
We prove the result by induction on $|\word{v}|$. For $\word{v} = \emptyword$, the result holds trivially. Now, let $\word{v} = \word{v'i}$ for some $\word{i} \in A_d$, and assume that the result holds for $\word{v'}$. Using \eqref{eq:def_stat_sig_coef}, we obtain 
$$
\mathbb{Y}_t^{\blambda, \word{v}} = \int_{-\infty}^te^{-\blambda^{\word{v'}}(t-u)}\mathbb{Y}_u^{\blambda, \word{v'}}\circ dX_{u+h}^i = \int_{-\infty}^te^{-\blambda^{\word{v'}}(t-u)}\mathbb{X}_{u+h}^{\blambda, \word{v'}}\circ dX_{u+h}^i,
$$ 
by the induction hypothesis. The change of variables $r = u + h$ yields:
$$
\mathbb{Y}_t^{\blambda, \word{v}} = \int_{-\infty}^{t+h}e^{-\blambda^{\word{v'}}((t+h)-r)}\mathbb{X}_{r}^{\blambda, \word{v'}}\circ dX_{r}^i = \mathbb{X}_{t + h}^{\blambda, \word{v}}.
$$
This completes the proof.
}
\end{proof}

As a time-invariant transformation, EFM-signature is expected to produce a stationary output in response to the stationary input.
{Indeed, the EFM-signature $\mathbb{X}^{\blambda} = (\mathbb{X}^{\blambda}_t)_{t\in\mathbb{R}}$ can be rigorously formulated as a $T((\mathbb{R}^d))$-valued stochastic process. For any finite sequence of times $t_1 < \dots < t_n$ and any truncation level $N$, the corresponding finite-dimensional projections of the signature are well-defined as solutions to the linear SDE \eqref{eq:sig_sde}. Because this holds for all $N$, it guarantees the consistency of the finite-dimensional distributions across the extended tensor algebra. Consequently, by Kolmogorov's extension theorem, there exists a unique probability measure on the path space $(T((\mathbb{R}^d)))^{\mathbb{R}}$ that admits these exact projections. This unique measure defines the law of the full process, denoted by $\mathcal{L}(\mathbb{X}^{\blambda})$, which naturally aligns with the finite-dimensional laws $\mathcal{L}(\mathbb{X}^{\blambda}_{t_1}, \dots, \mathbb{X}^{\blambda}_{t_n})$. From this point forward, whenever we refer to random variables or stochastic processes taking values in $T((\mathbb{R}^d))$, we implicitly rely on this construction.} In this subsection, we will show that if $X$ is a process with stationary increments, then its EFM-signature is a stationary process, i.e., for all $t_1 < \ldots < t_n$ and for all $h \in \R$, we have
\begin{equation}
    \mathcal{L}(\ssigX[t_1], \ldots, \ssigX[t_n]) = \mathcal{L}(\ssigX[t_1 + h], \ldots, \ssigX[t_n + h]).
\end{equation}
The proof relies on the following technical lemma.

\begin{lemma}\label{lem:stationarity}
    Let $\eta_t$ and $\sigma_t$ be stationary càdlàg adapted processes (possibly multivariate), such that, for all $h \in\R$,
    \begin{equation}
        \mathcal{L}((\eta_{t}, \sigma_{t}, X_t)_{t\in\R}) = \mathcal{L}((\eta_{t+h}, \sigma_{t+h}, X^h_t)_{t\in\R}),
    \end{equation}
    where $X_t^h := X_{t + h} - X_h$.
    Then, for all $\mu > 0$ and $i \in \{1, \ldots, d\}$, $Y_t = \int_{-\infty}^t e^{-\mu(t-s)}\sigma_s \circ dX_s^i$ is stationary and
    \begin{equation}\label{eq:int_law_eq}
        \mathcal{L}((\eta_{t}, Y_{t}, X_t)_{t\in\R}) = \mathcal{L}((\eta_{t+h}, Y_{t+h}, X^h_t)_{t\in\R}).
    \end{equation}
\end{lemma}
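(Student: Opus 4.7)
I would prove the joint-law identity \eqref{eq:int_law_eq} at the level of finite-dimensional marginals, by approximating $Y_t$ through Riemann sums of truncated integrals and then transferring the shift invariance available for $(\eta,\sigma,X)$ to $(\eta,Y,X)$. Stationarity of $Y$ follows immediately by projecting on the $Y$-coordinate.

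For $N > 0$, I set $Y_t^N := \int_{-N}^t e^{-\mu(t-s)}\sigma_s \circ dX_s^i$. Well-definedness of the improper integral, combined with the exponential damping, yields $Y_t^N \to Y_t$ in probability as $N \to \infty$, for each fixed $t$. For a partition $\pi = \{-N = s_0 < \ldots < s_m = t\}$ of $[-N, t]$, the midpoint Riemann sum
\begin{equation*}
Y_t^{N,\pi} := \sum_{j=0}^{m-1} \tfrac{1}{2}\bigl(e^{-\mu(t-s_j)}\sigma_{s_j} + e^{-\mu(t-s_{j+1})}\sigma_{s_{j+1}}\bigr)\bigl(X^i_{s_{j+1}} - X^i_{s_j}\bigr)
\end{equation*}
converges in probability to $Y_t^N$ as $|\pi| \to 0$ by the standard characterisation of the Stratonovich integral.

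Now fix finitely many times $t_1 < \ldots < t_k$ and $h \in \R$, and choose partitions $\pi$ of $[-N, t_k]$ and $\pi_h$ of $[-N, t_k + h]$ which are translates of one another. Each $Y_{t_j}^{N,\pi}$ is a fixed deterministic continuous function of the values of $\sigma$ at partition nodes and of \emph{increments} of $X^i$ between those nodes. Since increments of $X$ coincide with increments of $X^h$ (the centering constant $X_h$ cancels in any difference), applying the same functional after shifting all time arguments by $h$ and replacing $X$ by $X^h$ produces exactly $Y_{t_j+h}^{N,\pi_h}$. The hypothesis on joint shift invariance therefore gives
\begin{equation*}
\mathcal{L}\bigl((\eta_{t_j}, Y_{t_j}^{N,\pi}, X_{t_j})_{j=1}^k\bigr) = \mathcal{L}\bigl((\eta_{t_j+h}, Y_{t_j+h}^{N,\pi_h}, X^h_{t_j})_{j=1}^k\bigr).
\end{equation*}
Letting $|\pi| \to 0$ and then $N \to \infty$, convergence in probability preserves equality of finite-dimensional distributions and yields the desired identity with $Y$ in place of $Y^{N,\pi}$. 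Since $k$ and the times were arbitrary, \eqref{eq:int_law_eq} follows.

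The main obstacle is to make the two limit passages rigorous in a joint sense: one must ensure that the Stratonovich Riemann sums converge in probability in a sufficiently uniform manner (for $\sigma$ merely càdlàg adapted, the Itô-Stratonovich correction needs care), and that the truncation limit $N \to \infty$ commutes with the partition refinement, which can be handled by a diagonal extraction once the in-probability estimates are uniform enough. In the applications of the lemma made later in the paper, $\sigma$ is itself a continuous semimartingale built from lower levels of the EFM-signature, so the standard Stratonovich approximation results apply directly and the argument is essentially mechanical.
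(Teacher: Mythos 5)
Your approach is essentially the one the paper intends: its proof is a one-line deferral to \citet[Proposition~1]{sauri2017brownian}, whose argument is precisely this approximation of the improper (Stratonovich) integral by truncated Riemann sums that are measurable functions of $\sigma$ at the partition nodes and of increments of $X$, followed by transferring the joint shift invariance and passing to the limit in probability. The only slips are cosmetic: the translate of a partition of $[-N,t_k]$ covers $[-N+h,\,t_k+h]$ rather than $[-N,\,t_k+h]$ (harmless once $N\to\infty$), and no diagonal extraction is needed since the limits $|\pi|\to 0$ (for fixed $N$) and $N\to\infty$ can be taken sequentially, equality of finite-dimensional laws being preserved under convergence in probability at each stage.
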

\begin{proof}
    The proof of this statement follows the lines of the proof of \cite[Proposition~1]{sauri2017brownian}, with the following straightforward modifications: considering a semimartingale with stationary increments instead of Brownian motion, using Stratonovich integration instead of Itô integration, and adding the equality \eqref{eq:int_law_eq}, which also follows immediately from the referenced proof.
\end{proof}

\begin{theorem}\label{thm:stationarity}
    Suppose that $X = (X_t)$ is an $\R^d$-valued continuous semimartingale with stationary increments. Then, $\ssigX[] = (\ssigX[t])_{t \in \R}$ is a stationary $\eTA[d]$-valued process. In particular, for all $t \in \R$,
    \begin{equation}
        \mathcal{L}(\ssigX[t]) = \mathcal{L}(\ssigX[0]).
    \end{equation}
\end{theorem}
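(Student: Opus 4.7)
The plan is to argue by induction on the signature level $n$, strengthening the inductive claim to include joint stationarity of all levels $\le n$ with the driver $X$ in the precise sense required by Lemma~\ref{lem:stationarity}. The iterated-integral representation \eqref{eq:def_stat_sig_coef} expresses each level-$n$ coefficient as a Stratonovich integral of a lower-level coefficient against one component of $X$, weighted by a pure exponential $e^{-\lambda^{\word{v}}(t-u)}$. This is precisely the structure to which Lemma~\ref{lem:stationarity} applies.

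\textbf{Strengthened inductive statement.} For every $n \geq 0$, setting $\bv_t^{(n)} := (\sigX[t]^{\blambda, \word{v}})_{|\word{v}| \leq n}$, I will show that
\begin{equation}
    \mathcal{L}\bigl((\bv^{(n)}_t, X_t)_{t\in\R}\bigr) \;=\; \mathcal{L}\bigl((\bv^{(n)}_{t+h}, X^h_t)_{t\in\R}\bigr), \qquad h \in \R,
\end{equation}
where $X^h_t := X_{t+h} - X_h$. Since $X$ has stationary increments, the right-hand side in the driver component has the law of $X$ itself, so this yields joint shift-invariance of $\bv^{(n)}$ with $X$ for every $n$.

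\textbf{Base case and induction step.} For $n = 0$, the only component is the constant $1$, so the claim reduces to the hypothesis that $X$ has stationary increments. For the induction, assume the statement holds at level $n$. A level-$(n+1)$ coefficient is, by \eqref{eq:def_stat_sig_coef},
\[
    \sigX[t]^{\blambda, \word{v i}} = \int_{-\infty}^{t} e^{-\lambda^{\word{vi}}(t-u)}\, \sigX[u]^{\blambda, \word{v}} \,\circ d X_u^{i}, \qquad |\word{v}| = n,\ i \in\{1,\dots,d\}.
\]
Because the integrand $\sigma_u := \sigX[u]^{\blambda, \word{v}}$ is a component of the stationary adapted process $\bv^{(n)}$, and $\eta := \bv^{(n)}$ is also stationary adapted with the required joint-law property by the inductive hypothesis, Lemma~\ref{lem:stationarity} applies with $\mu = \lambda^{\word{vi}}$, and delivers joint stationarity of $(\bv^{(n)}, \sigX[]^{\blambda,\word{vi}}, X)$. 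I then enlarge $\eta$ to include the newly constructed coefficient and iterate this procedure over all finitely many words $\word{vi}$ of length $n+1$ one at a time. After finitely many applications (there are only $d^{\,n+1}$ such words), the full vector $\bv^{(n+1)}$ satisfies the inductive claim.

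\textbf{Conclusion.} Applying the induction to arbitrary $n$, the finite-dimensional marginals of any finite collection of signature coefficients at any finite set of times $t_1 < \cdots < t_m$ are invariant under the time shift $t_j \mapsto t_j + h$. Since the law of $(\ssigX[t])_{t\in\R}$ as an $\eTA$-valued process is determined by such finite-dimensional projections (via Kolmogorov's extension theorem, which was already invoked to construct the process), this establishes stationarity, and in particular $\mathcal{L}(\ssigX[t]) = \mathcal{L}(\ssigX[0])$ for all $t$.

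\textbf{Expected main obstacle.} The only non-routine point is ensuring that Lemma~\ref{lem:stationarity} can be iterated cleanly: each application produces one new coefficient, and one must verify that adjoining it preserves the càdlàg-adapted, stationary structure of $\eta$ so that the lemma's hypotheses remain satisfied in the next application. This is benign because Stratonovich integration against a continuous semimartingale produces a continuous adapted process, and only finitely many coefficients are adjoined per level, but it needs to be articulated carefully to avoid a vacuous simultaneous application of the lemma to multiple new integrands.
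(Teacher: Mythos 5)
Your proposal is correct and follows essentially the same route as the paper: induction over signature coefficients (the paper orders words by length and then lexicographically, which is exactly your ``adjoin one new coefficient at a time'' device), with the strengthened inductive hypothesis of joint shift-invariance with the driver so that Lemma~\ref{lem:stationarity} can be applied with $\eta$ equal to the already-constructed coefficients, $\sigma$ the prefix coefficient, and $\mu = \lambda^{\word{vi}}$. The only cosmetic difference is that the paper phrases the induction directly on words rather than on levels, which makes the ``one at a time'' iteration explicit from the start.
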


\begin{proof}
We show the stationarity of the truncated EFM-signature $\sigX[]^{\blambda, \leq N}$ for all $N$ by induction on words $\word{v}$, ordered first by length and then lexicographically (this order is denoted by $\prec$). For $\word{v} = \emptyword$, the EFM-signature is a constant process equal to $1$. 

For the induction step, consider a word $\word{v} = \word{v'i}$ and suppose that the process $(\sigX[]^{\blambda, \word{w}})_{\word{w} \prec \word{v}}$ is stationary, and that
\begin{equation}
    \mathcal{L}(((\sigX[t]^{\blambda, \word{w}})_{\word{w} \prec \word{v}}, X_t)_{t \in \R}) = \mathcal{L}(((\sigX[t+h]^{\blambda, \word{w}})_{\word{w} \prec \word{v}}, X_t^h)_{t \in \R}).
\end{equation}
Recall that $\sigX[]^{\blambda, \word{v}}$ is given by \eqref{eq:def_stat_sig_coef}. It follows from Lemma~\ref{lem:stationarity}, with $\eta_t = (\sigX^{\blambda, \word{w}})_{\word{w} \prec \word{v}}$, $\sigma_t = \sigX[t]^{\blambda, \word{v'}}$, and $\mu = \blambda^{\word{v}}$, that $\sigX[t]^{\blambda, \word{v}}$ is stationary, and for all $h \in \R$, we have
\begin{equation}
    \mathcal{L}(((\sigX[t]^{\blambda, \word{w}})_{\word{w} \preceq \word{v}}, X_t)_{t \in \R}) = \mathcal{L}(((\sigX[t+h]^{\blambda, \word{w}})_{\word{w} \preceq \word{v}}, X_t^h)_{t \in \R}).
\end{equation}

This proves the stationarity of the finite-dimensional processes $\sigX[]^{\blambda, \leq N}$ for all $N \in \N$, and hence implies the stationarity of $\ssigX[]$.
\end{proof}

Another advantage of increments stationarity is that it allows one to verify the conditions of Theorem~\ref{thm:fm_sig_existence}, showing that $\mathbf{X} \in \mathscr{C}^{\alpha, \brho}((-\infty, T])$.

\begin{proposition}\label{prop:stat_holder_const}
    Let $X = (X_t)_{t \leq T}$ be a continuous $\R^d$-valued semimartingale with stationary increments, and suppose that for some interval $[t_0, t_0 + h]$, the Hölder constants satisfy
    \begin{equation}\label{eq:Holder_const}
         \E\left[\|\mathbb{X}^{1}\|_{\alpha; [t_0, t_0 + h]}\right]  + \E\left[\|\mathbb{X}^{2}\|_{2\alpha; [t_0, t_0 + h]}\right] < \infty.
    \end{equation}
    Then, $\mathbf{X} \in \mathscr{C}^{\alpha, \brho}((-\infty, T])$ almost surely.
\end{proposition}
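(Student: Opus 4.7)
The plan is to reduce the exponentially weighted Hölder norms on the infinite past $(-\infty, T]$ to a geometrically decaying series of Hölder norms on bounded intervals, and then invoke stationarity of increments together with the Borel--Cantelli lemma to obtain almost-sure finiteness.

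Let $\rho_{\min} := \min_{1 \leq i \leq d} \rho^i > 0$. By enlarging $h$ if necessary (covering an interval of length $2h$ by finitely many overlapping translates of $[t_0, t_0+h]$ and using subadditivity of $\|\cdot\|_{\alpha;[\cdot,\cdot]}$ on pairs with $|u-v|<1$, which lie inside one piece when overlaps are at least $1$), we may assume $h \geq 2$. Partition $(-\infty, T]$ by overlapping blocks $J_n := [T-(n+2)h,\, T-nh]$ of length $2h$, so that every pair $(u,v)$ with $v \leq T - nh$ and $|u-v|<1$ lies in a single $J_n$. Since $\rho^{\word{v}} \geq |\word{v}|\rho_{\min}$, we have $|\D[T-u][\brho] \mathbb{X}^k_{u,v}| \leq e^{-k\rho_{\min}(T-u)}|\mathbb{X}^k_{u,v}|$; combining this with $T - u \geq nh - 1$ for $u \in J_n$, one obtains
\begin{equation}
    \|\mathbb{X}^k\|_{k\alpha, \brho} \lesssim_{\brho} \sup_{n \geq 0} e^{-k\rho_{\min} nh}\, \|\mathbb{X}^k\|_{k\alpha; J_n}, \qquad k = 1, 2.
\end{equation}

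Since $\mathbb{X}^k_{u,v}$ depends only on the increments of $X$ over $[u,v]$, stationarity of increments implies that the law of $\|\mathbb{X}^k\|_{k\alpha; J_n}$ depends only on $|J_n| = 2h$. Covering $J_n$ by finitely many overlapping translates of $[t_0, t_0+h]$ with pairwise overlap at least $1$, each Hölder pair $(u,v)$ with $|u-v| < 1$ stays within one piece of the cover, and subadditivity gives
\begin{equation}
    \E\bigl[\|\mathbb{X}^k\|_{k\alpha; J_n}\bigr] \leq C_k := C(h) \cdot \E\bigl[\|\mathbb{X}^k\|_{k\alpha; [t_0, t_0 + h]}\bigr] < \infty
\end{equation}
uniformly in $n$, by \eqref{eq:Holder_const}. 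Markov's inequality with threshold $e^{k\rho_{\min} nh / 2}$ then yields $\P(\|\mathbb{X}^k\|_{k\alpha; J_n} \geq e^{k\rho_{\min} nh / 2}) \leq C_k\, e^{-k\rho_{\min} nh / 2}$, a summable sequence. Borel--Cantelli gives $e^{-k\rho_{\min} nh}\,\|\mathbb{X}^k\|_{k\alpha; J_n} \to 0$ a.s.; combined with the a.s.\ finiteness of each individual $\|\mathbb{X}^k\|_{k\alpha; J_n}$ (from finite expectation), the supremum above is a.s.\ finite, proving $\mathbf{X} \in \mathscr{C}^{\alpha, \brho}((-\infty, T])$ almost surely.

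The main obstacle is the careful design of the overlapping cover, which must simultaneously ensure that (i) every Hölder pair $(u,v)$ with $|u-v|<1$ is captured inside a single block $J_n$ and (ii) each $J_n$ decomposes into translates of the reference interval $[t_0, t_0 + h]$ with pairwise overlap at least $1$, so that stationarity can be applied. Once this geometric bookkeeping is resolved, the remainder is a standard Borel--Cantelli argument with exponential weights.
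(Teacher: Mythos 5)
Your argument is correct in substance but follows a genuinely different route from the paper. The paper telescopes: it chains the weighted H\"older seminorm on $[R,T]$ through the contiguous blocks $[T-kh,\,T-(k-1)h]$ via subadditivity, obtains $\|\mathbf{X}\|_{\alpha,\brho}\le\sum_{k\ge1}\|\D[kh][\brho]\mathbf{X}\|_{\alpha;[T-kh,T-(k-1)h]}$ after invoking Lemma~\ref{lem:holder_weighted}, and concludes by taking expectations of this geometrically weighted series, whose terms have a $k$-independent finite mean by stationarity. You instead bound the weighted norm by a \emph{supremum} over overlapping windows $J_n$ of length $2h$ chosen so that every pair with $|u-v|<1$ sits inside a single window, and then use Markov plus Borel--Cantelli to kill the supremum. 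Both work; the paper's version is shorter (no Borel--Cantelli needed, just $\E[\|\mathbf{X}\|_{\alpha,\brho}]<\infty$), while your overlapping-window device has a real advantage: because no H\"older pair is ever split across a breakpoint, you never need the concatenation inequality for $\mathbb{X}^2$, which carries a Chen cross term $\|\mathbb{X}^1\|_{\alpha;I}\|\mathbb{X}^1\|_{\alpha;I'}$ whose expectation would require second moments of $\|\mathbb{X}^1\|_\alpha$ rather than the first moments assumed in \eqref{eq:Holder_const} (the paper's ``subadditivity of the H\"older seminorms'' quietly elides this for the second level).

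The one thin spot is your opening reduction to $h\ge 2$: the parenthetical justification (translates of $[t_0,t_0+h]$ with pairwise overlap at least $1$) only makes sense when $h>1$, so for a short reference interval, say $h=1/10$, you are forced back to a genuine chaining step across pieces, and there the $\mathbb{X}^2$ cross term reappears and your first-moment hypothesis no longer obviously suffices. You should either state the extra moment assumption needed in that regime, or note explicitly that the reduction as written covers only $h>1$ (the paper's proof, which works with blocks of the given length $h$ directly, has the same unaddressed issue whenever $h<1$, since pairs with $|u-v|<1$ can then straddle several blocks). Apart from this, the geometric bookkeeping, the use of stationarity to make the block norms identically distributed, and the Borel--Cantelli conclusion are all sound.
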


\begin{proof}
    First, by the stationarity of increments, the law of $\|\mathbb{X}^{1}\|_{\alpha; [t_0, t_0 + h]}$ and $\|\mathbb{X}^{2}\|_{2\alpha; [t_0, t_0 + h]}$ does not depend on $t_0$.
    For $R = T - nh$, $n \in \mathbb{N}$, using the subadditivity of the Hölder seminorms, we have
    \begin{align}
        \|\D[T-R][\brho]\mathbf{X}\|_{\alpha; [R, T]} 
        &\leq \sum_{k=1}^{n} \|\D[T-R][\brho]\mathbf{X}\|_{\alpha; [T - kh, T - (k - 1)h]} \\
        &\leq \sum_{k=1}^{+\infty} \|\D[kh][\brho]\mathbf{X}\|_{\alpha; [T - kh, T - (k - 1)h]}.
    \end{align}
    The case of arbitrary $R < T$ follows similarly. Taking the supremum over $R$ and applying Lemma~\ref{lem:holder_weighted} yields
    \begin{equation}
        \|\mathbf{X}\|_{\alpha, \brho} \leq \sum_{k=1}^{+\infty} \|\D[kh][\brho]\mathbf{X}\|_{\alpha; [T - kh, T - (k - 1)h]} {\leq \sum_{k=1}^{+\infty} \theta^k\|\mathbf{X}\|_{\alpha; [T - kh, T - (k - 1)h]},}
    \end{equation}
    {where we set $\theta := e^{-\rho^{\min}} \in (0, 1)$.
    By the stationarity assumption, $\mathbb{E}[\|\mathbf{X}\|_{\alpha; [T - kh, T - (k-1)h]}]$ is finite and independent of $k$. Therefore, applying the Monotone Convergence Theorem, we conclude that}
    \[
        \mathbb{E}[\|\mathbf{X}\|_{\alpha, \brho}] {\leq \dfrac{\theta}{1 - \theta}\mathbb{E}[\|\mathbf{X}\|_{\alpha; [T - h, T]}]} < \infty,
    \]
    hence $\|\mathbf{X}\|_{\alpha, \brho}$ is almost surely finite.
\end{proof}
\begin{sqremark}\label{rem:BM_EFM_sig_well_def}
        The condition \eqref{eq:Holder_const} can be verified using the Kolmogorov continuity theorem for rough paths (see, e.g., \cite[Theorem 3.1]{friz2020course}). Proposition~\ref{prop:stat_holder_const}, in particular, establishes the well-definedness of the EFM-signature of $d$-dimensional (possibly time-augmented) Brownian motion $\ssigX[][W]$ ($\ssigX[][\widehat W]$ respectively).
\end{sqremark}

\section{Properties of the EFM-signature}\label{sect:algebraic_prop}

In this section, we establish results on the algebraic and analytic properties of EFM-signatures, which closely resemble those of standard signatures. 

\subsection{Chen's identity}
Chen's identity for signatures (see \cite{chen1958integration}) states that for any $0 \leq s \leq u \leq t \leq T$,
\begin{equation}
    \sigX[s, t] = \sigX[s, u] \otimes \sigX[u, t].
\end{equation}
In other words, to compute the signature of a concatenated path $X * Y$, one can compute independently the signatures of $X$ and $Y$, and then take their tensor product.

When dealing with EFM-signatures, the algebraic structure of iterated integrals remains unchanged, except that each component $\sigX[s, u]^{\blambda, \word{v}}$ of the EFM-signature over $[s, u]$ must be multiplied by $e^{-\blambda^{\word{v}}(t - u)}$ to compensate for the difference in exponential weights in \eqref{eq:fm_sig_def}. This means that the EFM-signature $\ssigX[s, u]$ must be appropriately \emph{discounted} using the operator $\D[t - u]$ before applying the tensor product. In other words, the contribution of past segments to the EFM-signature decays exponentially at rate $\blambda^i$ for the $i$-th component of the path. More precisely, the following result holds.

\begin{proposition}\label{prop:Chen_FM}
    {Fix $\blambda\in\R^d, \blambda\succ 0$ and let $X = (X_t)_{t\in\R}$ be continuous $\R^d$-valued semimartingale such that $\ssigX[]$ is well-defined. } Then,
    \begin{equation}\label{eq:stat_chen}
        \ssigX[s, t] = (\D[t - u]\ssigX[s, u]) \otimes \ssigX[u, t]
    \end{equation}
    for all $s \leq u \leq t$.
\end{proposition}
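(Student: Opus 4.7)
The plan is to exploit the SDE characterization of the EFM-signature and apply the uniqueness/fundamental solution result in Proposition~\ref{prop:fundamental_sol}. Fix $s \leq u$ and view both sides of \eqref{eq:stat_chen} as $\eTA$-valued processes indexed by $t \geq u$. I will show that each side solves the linear system \eqref{eq:sig_sde_fund} with initial time $u$ and initial value $\mathbbm{x} = \ssigX[s, u]$, so that element-wise uniqueness at every truncation level $N$ forces them to coincide.

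First I will check the initial conditions at $t = u$. Since $\D[0] = \mathrm{Id}$ and $\ssigX[u, u]$ equals the unit $(1, 0, 0, \ldots)$ of $\eTA$, the right-hand side reduces to $\ssigX[s, u]$, which is precisely the value of the left-hand side at $t = u$. Next, I verify the dynamics. The left-hand side solves the signature SDE \eqref{eq:sig_sde} for all $t \geq s$, hence in particular for $t \geq u$, yielding
\begin{equation}
    d\ssigX[s, t] = -\G \ssigX[s, t]\,dt + \ssigX[s, t] \otimes \circ dX_t, \qquad t \geq u.
\end{equation}
For the right-hand side, Proposition~\ref{prop:fundamental_sol} (applied with starting time $u$ and initial datum $\mathbbm{x} = \ssigX[s, u]$) gives that $(\D[t - u]\ssigX[s, u]) \otimes \ssigX[u, t]$ is a solution of the very same equation. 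By uniqueness of the linear truncated SDE at each order $N$, the two processes agree for all $t \geq u$, which is \eqref{eq:stat_chen}.

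The only point requiring care is that Proposition~\ref{prop:fundamental_sol} is stated with a deterministic $\mathbbm{x} \in \eTA$, whereas here $\mathbbm{x} = \ssigX[s, u]$ is random but $\F_u$-measurable. This causes no issue: the proof of Proposition~\ref{prop:fundamental_sol} proceeds by applying the Stratonovich Leibniz rule together with Proposition~\ref{prop_D_ppties}\,(vi),(viii), and these identities go through verbatim for any $\F_u$-measurable initial value; uniqueness at each truncation level $N$ follows from the fact that the truncated system is a linear SDE with Lipschitz coefficients, for which strong existence and uniqueness hold with arbitrary $\F_u$-measurable initial data.

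As an alternative route, one could prove the identity directly on components by splitting the simplex $\{s < u_1 < \cdots < u_n < t\}$ according to the number $k$ of integration variables lying in $[s, u]$, factoring $e^{-\lambda^{i_j}(t - u_j)} = e^{-\lambda^{i_j}(t - u)} e^{-\lambda^{i_j}(u - u_j)}$ for $j \leq k$, and recognising $\prod_{j=1}^{k} e^{-\lambda^{i_j}(t - u)} = e^{-\lambda^{\word{i_1 \cdots i_k}}(t - u)}$ as the $\word{i_1 \cdots i_k}$-action of $\D[t-u]$; summing over $k$ and all splittings $\word{v} = \word{u}\word{w}$ reproduces the tensor product. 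This combinatorial argument is transparent but notationally heavy, so I prefer the SDE-uniqueness approach; the main (minor) obstacle in either case is the extension from deterministic to $\F_u$-measurable initial data, which I address as above.
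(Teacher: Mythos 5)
Your proof is correct and takes essentially the same route as the paper, which likewise deduces \eqref{eq:stat_chen} from Proposition~\ref{prop:fundamental_sol} combined with the EFM-signature dynamics \eqref{eq:sig_sde}. Your explicit handling of the $\F_u$-measurable (rather than deterministic) initial datum is a point the paper leaves implicit, but it does not change the argument.
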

\begin{proof}
    The proof follows from Proposition~\ref{prop:fundamental_sol} together with the EFM-signature dynamics \eqref{eq:sig_sde}.
\end{proof}

{
\begin{sqremark}
    Chen's identity for the EFM-signature can also be proved using the link to the standard signature presented in Subsection~\ref{sect:markovian_lift}. Indeed, by Proposition~\ref{Prop:Markovian_lift} and the classical Chen identity,
    \[
        \ssigX[s, t] = \mathbb{X}_{s,t}^t = \mathbb{X}_{s,u}^t \otimes \mathbb{X}_{u,t}^t.
    \]
    It remains to observe that $X^t = \D[t-u]X^u$, so that
    \(
        \mathbb{X}_{s,u}^t = \D[t-u]\mathbb{X}_{s,u}^u = \D[t-u]\ssigX[s,u].
    \)
\end{sqremark}
}

\subsection{EFM-signature of linear path}\label{sect:linear_path}
Chen's identity is a crucial property for the computation of signatures. Indeed, a standard approach to computing the signature of a discrete path is to approximate it by a piecewise linear path, compute the signature of each linear segment—which is known explicitly—and then recover the signature of the entire path by performing the tensor multiplication of the segment signatures.

We begin by noticing that when {all the components of $\blambda$ are equal}, the EFM-signature of a linear path can be computed in closed form as an exponential, similarly to the standard signature. For 
$\bell\in\eTA$, we define the tensor product exponential as the series
$$
{\exp^\otimes}(\bell) := \sum_{k \geq 0}\dfrac{\bell\conpow{k}}{k!}.
$$
\begin{lemma}
    Suppose that $\blambda = (\lambda, \ldots, \lambda)$ for some $\lambda > 0$. For a linear path $X_t = t \bm{x}$ with $\bm{x} \in \R^d$, its EFM-signature $\mathbb{X}^{\blambda}_{s, t} = (\mathbb{X}^{\blambda, n}_{s, t})_{n \geq 0}$ is given explicitly by
    \begin{equation}\label{eq:linear_path_stat_sig}
        \ssigX[s, t] = \exp^\otimes\left(\frac{1 - e^{-\lambda(t - s)}}{\lambda(t-s)}  (X_{t} - X_s)\right).
    \end{equation}

In particular, the EFM-signature of a linear path over $(-\infty, t]$ is constant and equals
\begin{equation}\label{eq:linear_path_stat_sig_inf}
    \mathbb{X}^{\blambda}_{t} = \exp^\otimes\left({\frac{\bm{x}}{\lambda}}\right).
\end{equation}
Moreover, the EFM-signature of a piecewise-linear path with breakpoints $t_0 < t_1 < \ldots < t_n$ is given recursively by
\begin{equation}\label{eq:piecewise_linear_path_stat_sig_inf}
   \ssigX[t_0, t_n] = (\D[t_n - t_{n-1}]\ssigX[t_0, t_{n-1}]) \otimes \exp^\otimes\left(\frac{1 - e^{-\lambda(t_{n} - t_{n-1})}}{\lambda(t_{n}-t_{n-1})} (X_{t_{n}} - X_{t_{n-1}})\right). 
\end{equation}
\end{lemma}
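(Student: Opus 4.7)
The plan is to first compute each level $\mathbb{X}^{\blambda,n}_{s,t}$ of the EFM-signature of a linear path directly from the definition \eqref{eq:fm_sig_def}, then recognize the resulting series as a tensor exponential, and finally derive \eqref{eq:linear_path_stat_sig_inf} by taking $s\to-\infty$ and \eqref{eq:piecewise_linear_path_stat_sig_inf} by invoking Chen's identity (Proposition~\ref{prop:Chen_FM}).

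For the first step, since $\blambda=(\lambda,\dots,\lambda)$, we have $\lambda^{\word{v}} = n\lambda$ for every word $\word{v}$ of length $n$. For $X_u = u\bm{x}$, $dX_u = \bm{x}\,du$, so the $n$-th level becomes
\begin{equation*}
\mathbb{X}^{\blambda,n}_{s,t}
= \bm{x}^{\otimes n}\int_{s<u_1<\cdots<u_n<t} e^{-\lambda(nt-u_1-\cdots-u_n)}\,du_1\cdots du_n .
\end{equation*}
The integrand is symmetric under permutations of $(u_1,\dots,u_n)$, so one can extend the integration from the simplex $\{s<u_1<\cdots<u_n<t\}$ to the full cube $[s,t]^n$ at the cost of dividing by $n!$, yielding
\begin{equation*}
\mathbb{X}^{\blambda,n}_{s,t}
= \frac{\bm{x}^{\otimes n}}{n!}\left(\int_s^t e^{-\lambda(t-u)}\,du\right)^n
= \frac{1}{n!}\left(\frac{1-e^{-\lambda(t-s)}}{\lambda}\,\bm{x}\right)^{\otimes n}.
\end{equation*}
Substituting $\bm{x} = (X_t-X_s)/(t-s)$ and summing over $n$ identifies the whole EFM-signature with $e^{\otimes\left(\frac{1-e^{-\lambda(t-s)}}{\lambda(t-s)}(X_t-X_s)\right)}$, establishing \eqref{eq:linear_path_stat_sig}.

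For \eqref{eq:linear_path_stat_sig_inf}, I would rewrite $\frac{1-e^{-\lambda(t-s)}}{\lambda(t-s)}(X_t-X_s) = \frac{1-e^{-\lambda(t-s)}}{\lambda}\bm{x}$ which converges to $\bm{x}/\lambda$ as $s\to-\infty$; since the linear path satisfies $\|X\|_{\mathrm{BV},\blambda}=|\bm{x}|/\lambda<\infty$, Proposition~\ref{prop:fm_bv_case} guarantees the EFM-signature is well-defined on $(-\infty,t]$ and the dominated convergence (using the uniform bound \eqref{eq:BV_bound} on each level) justifies passage to the limit level by level. Finally, \eqref{eq:piecewise_linear_path_stat_sig_inf} follows by directly applying Chen's identity \eqref{eq:stat_chen} with $(s,u,t)=(t_0,t_{n-1},t_n)$ and substituting \eqref{eq:linear_path_stat_sig} for the signature of the last linear piece $\mathbb{X}^{\blambda}_{t_{n-1},t_n}$.

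The only mildly delicate step is the symmetrization argument on the simplex, which is completely routine once one notes that both the scalar weight $e^{-\lambda(nt-u_1-\cdots-u_n)}$ and the tensor $\bm{x}^{\otimes n}$ are symmetric under permutations of the integration variables; the rest of the proof is a direct assembly of formulas \eqref{eq:stat_chen} and the closed-form expression for a single linear segment.
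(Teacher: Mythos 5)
Your proof is correct and follows essentially the same route as the paper: the paper dismisses the computation of $\mathbb{X}^{\blambda,n}_{s,t}$ as "straightforward," and your symmetrization of the simplex integral (valid here because all entries of $\blambda$ are equal, so the scalar weight $e^{-\lambda(nt-u_1-\cdots-u_n)}$ and the constant tensor $\bm{x}^{\otimes n}$ are both permutation-invariant) is exactly the standard way to carry it out, after which the limit $s\to-\infty$ and the application of Chen's identity \eqref{eq:stat_chen} match the paper's argument.
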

\begin{proof}
A straightforward computation shows that
\begin{equation}
    \mathbb{X}^{\blambda, n}_{s, t} = \dfrac{1}{n!}\left(\dfrac{1 - e^{-\lambda(t - s)}}{\lambda(t-s)} (X_{t} - X_s)\right)^{\otimes n}, \quad n \geq 0,
\end{equation}
so that \eqref{eq:linear_path_stat_sig} holds. \eqref{eq:linear_path_stat_sig_inf} follows as a limit as $s \to -\infty$.
The formula \eqref{eq:piecewise_linear_path_stat_sig_inf} follows from Chen's identity \eqref{eq:stat_chen}.
\end{proof}

The case where $\blambda$ has different components is more delicate, as the computation involves integrals of the form
\begin{equation}\label{eq:interated_exp_int}
    \int_{s < u_1 < \cdots < u_n < t} e^{-\blambda^{i_1}(t - u_1)} \cdots e^{-\blambda^{i_n}(t - u_n)} \, d u_1 \ldots d u_n,
\end{equation}
which are no longer symmetric, making it impossible to use formulas like \eqref{eq:linear_path_stat_sig}. Fortunately, these integrals can still be computed explicitly via the following lemma.
\begin{lemma}\label{lem:linear_path}
    Fix $\blambda = (\blambda^1, \ldots, \blambda^d)\in\R^d, \, \blambda\succ 0$.
    For each word $\word{v} = \word{i_1\ldots i_n}$, the components of the EFM-signature $\ssigX[]$ of the linear path $X_t = tx$ are given by
    \begin{equation}\label{eq:linear_path_coefs_explicit}
        \mathbb{X}^{\blambda, \word{v}}_{s, t} = (x^{i_1}\cdot\ldots\cdot x^{i_n})\sum_{k=0}^{n} c_k e^{-\mu_k (t - s)}, \qquad \mathbb{X}^{\blambda, \word{v}}_{t} = (x^{i_1}\cdot\ldots\cdot x^{i_n})\prod\limits_{k=1}^n\dfrac{1}{\mu_k}
    \end{equation}
    where $c_k = \prod\limits_{l \neq k}\dfrac{1}{\mu_l - \mu_k}$ and $\mu_k = \sum\limits_{j = 1}^k\blambda^{i_j}, \ k = 0, \ldots, n,$ with $\mu_{0} = 0$.
\end{lemma}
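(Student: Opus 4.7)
I proceed by induction on the length $n = |\word{v}|$ of the word, using the iterated representation \eqref{eq:def_stat_sig_coef} from Remark~\ref{rmk:sig_iteration_def}. Since $X_t = tx$ is smooth, Stratonovich and Riemann integration coincide, and $dX^i_u = x^i\,du$.

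The base case $n = 0$ is immediate: the empty product gives $c_0 = 1$, and $\mu_0 = 0$ yields $\mathbb{X}^{\blambda, \emptyword}_{s, t} = 1$. For the inductive step, write $\word{v} = \word{v' i_n}$ with $\word{v'} = \word{i_1\cdots i_{n-1}}$, and apply \eqref{eq:def_stat_sig_coef}:
\begin{equation}
    \mathbb{X}^{\blambda, \word{v}}_{s, t} = x^{i_n}\int_s^t e^{-\mu_n(t - u)}\, \mathbb{X}^{\blambda, \word{v'}}_{s, u}\, du.
\end{equation}
Substituting the induction hypothesis $\mathbb{X}^{\blambda, \word{v'}}_{s, u} = (x^{i_1}\cdots x^{i_{n-1}})\sum_{k=0}^{n-1}\tilde c_k e^{-\mu_k(u-s)}$, with $\tilde c_k = \prod_{\substack{0 \leq l \leq n-1 \\ l \neq k}}(\mu_l - \mu_k)^{-1}$, and evaluating the elementary integral
\begin{equation}
    \int_s^t e^{-\mu_n(t-u)} e^{-\mu_k(u-s)}\,du = \frac{e^{-\mu_k(t-s)} - e^{-\mu_n(t-s)}}{\mu_n - \mu_k},
\end{equation}
gives, after regrouping by exponentials,
\begin{equation}
    \mathbb{X}^{\blambda, \word{v}}_{s, t} = (x^{i_1}\cdots x^{i_n})\left[\sum_{k=0}^{n-1}\frac{\tilde c_k}{\mu_n - \mu_k}\,e^{-\mu_k(t-s)} \;-\; \Big(\sum_{k=0}^{n-1}\frac{\tilde c_k}{\mu_n - \mu_k}\Big)\,e^{-\mu_n(t-s)}\right].
\end{equation}
The coefficient of $e^{-\mu_k(t-s)}$ for $k \leq n-1$ is $\tilde c_k/(\mu_n - \mu_k) = c_k$ by definition of $c_k$, which handles those terms directly.

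The only delicate step is to identify the coefficient of $e^{-\mu_n(t-s)}$ with $c_n$. This reduces to the Lagrange-type identity $\sum_{k=0}^n c_k = 0$, which I would establish as follows: since $\blambda \succ 0$, the $\mu_0, \dots, \mu_n$ are pairwise distinct, and Lagrange interpolation of the constant polynomial $1$ at these nodes shows that its leading coefficient of degree $n$ vanishes, that is $\sum_{k=0}^n \prod_{l \neq k}(\mu_k - \mu_l)^{-1} = 0$; multiplying by $(-1)^n$ yields $\sum_{k=0}^n c_k = 0$, so the coefficient of $e^{-\mu_n(t-s)}$ equals $-\sum_{k=0}^{n-1} c_k = c_n$, completing the induction.

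Finally, the formula for $\mathbb{X}^{\blambda, \word{v}}_t = \mathbb{X}^{\blambda, \word{v}}_{-\infty, t}$ follows by letting $s \to -\infty$ in \eqref{eq:linear_path_coefs_explicit}: since $\mu_k > 0$ for $k \geq 1$, every term with $k \geq 1$ vanishes, leaving only the $k = 0$ contribution with $c_0 = \prod_{l=1}^n(\mu_l - 0)^{-1} = \prod_{l=1}^n \mu_l^{-1}$. The main obstacle is really just the combinatorial identity $\sum_k c_k = 0$; everything else is a routine induction with an elementary exponential integral.
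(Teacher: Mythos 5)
Your proof is correct, and it takes a genuinely different route from the paper's. The paper reduces the claim to the iterated exponential integral over a simplex, recognizes it as an $(n+1)$-fold convolution of exponentials $e^{-\mu_k t}$, and evaluates it via the Laplace transform and a partial-fraction decomposition; you instead run a direct induction on the word length using the one-step recursion \eqref{eq:def_stat_sig_coef}, an elementary exponential integral, and the Lagrange-interpolation identity $\sum_{k=0}^n c_k = 0$ to identify the coefficient of $e^{-\mu_n(t-s)}$. The two arguments are cousins at heart — your identity $\sum_k c_k = 0$ is exactly the statement that the residues of $\prod_k (w+\mu_k)^{-1}$ sum to zero, which is what makes the paper's partial-fraction step work — but your version is more elementary and self-contained, avoiding the change of variables and transform machinery, at the cost of having to prove the combinatorial identity explicitly. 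Both proofs rely on the $\mu_k$ being pairwise distinct, which you correctly justify from $\blambda \succ 0$ (the $\mu_k$ are strictly increasing partial sums), matching the paper's caveat that the partial-fraction decomposition is "valid when all $\mu_k$ are distinct." Your bookkeeping checks out: $c_k = \tilde c_k/(\mu_n-\mu_k)$ for $k<n$ by definition, and the limit $s\to-\infty$ isolates the $k=0$ term with $c_0=\prod_{l=1}^n \mu_l^{-1}$, exactly as claimed.
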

\begin{proof}
    The proof is given in Appendix~\ref{sect:linear_path_proof}.
\end{proof}
{
\begin{sqremark}
    In the case $\blambda = (\lambda, \ldots, \lambda)$, the computational complexity of the EFM-signature for a linear path truncated at order $N$ coincides with that of the standard signature, equaling $\mathcal{O}(d^N)$ in terms of both memory and time. This follows immediately from \eqref{eq:linear_path_stat_sig}. In the general case $\blambda = (\blambda^1, \ldots, \blambda^d) \in \mathbb{R}^d$, the complexity of computation using formula \eqref{eq:linear_path_coefs_explicit} is slightly higher, at $\mathcal{O}(N^2d^N)$. We suggest that this could be improved. However, the development of efficient numerical methods for computing the EFM-signature is beyond the scope of this paper.
\end{sqremark}
}

We can now state the result for general $\blambda$. The EFM-signature of a linear path can be also written in exponential form, as in \eqref{eq:linear_path_stat_sig}, using the notion of Magnus exponential. 

We rely on the classical result due to \citet{magnus}, adapted to the right multiplication case. It states that the solution to the linear system on $\tTA{N}$
\begin{align}
    \dot{\mathbbm{x}}_t^N &= \mathbbm{x}_t^N\otimes \bm A(t), \quad t \geq s,\\
    \mathbbm{x}_s^N &= \emptyword,
\end{align}
is given by
\begin{equation}\label{eq:magnus_exp}
    \mathbbm{x}_t^N = \exp^\otimes\left(\sum_{k \geq 1}\bm{\Omega}_k(s, t)\right),
\end{equation}
where 
\begin{equation}\label{eq:magnus_omega}
\begin{aligned}
    \bm{\Omega}_1(s, t) &= \int_{s}^t \bm A(u_1)du_1, \\
    \bm{\Omega}_2(s, t) &= \dfrac{1}{2}\int_s^tdu_1\int_s^{u_1}du_2 [\bm A(u_1), \bm A(u_2)], \\
    \bm{\Omega}_3(s, t) &= \dfrac{1}{6}\int_s^tdu_1\int_s^{u_1}du_2\int_s^{u_2}du_3 \left([\bm A(u_1), [\bm A(u_2), \bm A(u_3)]] + [[\bm A(u_1), \bm A(u_2)], \bm A(u_3)]\right), \\
    \ldots \\
    \bm{\Omega}_n(s, t) &= \sum_{j=1}^{n-1}\dfrac{B_j}{j!}\sum_{\substack{k_1 + \ldots + k_j = n-1 \\ k_1 \geq 1, \ldots, k_j \geq 1}}\int_s^t\mathrm{ad}_{\bm{\Omega}_{k_1}(s, u)}\ldots\mathrm{ad}_{\bm{\Omega}_{k_j}(s, u)}\bm{A}(u)\,du,
\end{aligned}
\end{equation}
{the bracket $[\cdot, \cdot]$ is the inverse commutator $[\bp, \bq] := \bq \otimes\bp - \bp\otimes\bq$, $\mathrm{ad}_{\bm{B}}\bm{A} := [\bm B, \bm A]$, and $B_j$ stands for the Bernoulli numbers defined through the generating function
$$
\dfrac{x}{e^x-1} = \sum_{n \geq 0}\dfrac{B_n}{n!}x^n.
$$}
The following result requires the inversion of the operator $\G$ introduced in Definition~\ref{def:G_oper}. Although $\G$ is diagonal, it is not injective, since $\G\emptyword = 0$. For this reason, we will use its generalized inverse $\G^\dag$, defined by
\begin{align}\label{eq:G_inv}
    \G^\dag \colon \bell = \sum_{n \geq 0} \sum_{|\word{v}| = n} \bell^{\word{v}}\word{v} &\mapsto \G^\dag \bell = \sum_{n \geq 1} \sum_{|\word{v}| = n} \frac{1}{\blambda^{\word{v}}} \bell^{\word{v}}\word{v}.
\end{align}

\begin{theorem}\label{thm:linear_path_magnus}
    Fix $\blambda = (\blambda^1, \ldots, \blambda^d)\in\R^d, \, \blambda\succ 0$. For a linear path $X_t = t \bm{x}$ with $\bm{x} \in \R^d$, its EFM-signature $\mathbb{X}^{\blambda}_{s, t}$ is given by
    \begin{equation}\label{eq:linear_path_stat_sig_magnus}
        \ssigX[s, t] = \exp^\otimes \left(\sum_{k \geq 1}\bm{\Omega}_k^{\blambda, {\bm x}}(s, t)\right), \quad
        \ssigX[t] = \exp^\otimes\left(\sum_{k \geq 1}\bm{\Omega}_k^{\blambda, {\bm x}}(-\infty, t)\right) = \sum_{k \geq 0}{\bf H}_{\bm x}^k\emptyword,
    \end{equation}
    where ${\bm{\Omega}}_k^{\blambda, {\bm x}}(s, t)$ is given by \eqref{eq:magnus_omega} with $\bm A(u) = \D[t - u]{\bm x}$ and ${\bf H}_{\bm x}\colon \bell \mapsto \G^\dag(\bell\otimes {\bm x})$. The coefficients of the EFM-signature are given explicitly by \eqref{eq:linear_path_coefs_explicit}.
    Moreover, if $\blambda^1 = \ldots = \blambda^d =: \lambda$, then 
    \begin{equation}\label{eq:linear_path_stat_sig_magnus_equal}
        \ssigX[s, t] = \exp^\otimes\left(\frac{1 - e^{-\lambda(t - s)}}{\lambda} {\bm x}\right), \quad \mathbb{X}^{\blambda}_{t} = \exp^\otimes\left({\frac{\bm{x}}{\lambda}}\right).
    \end{equation}
\end{theorem}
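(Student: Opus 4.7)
The plan is to reduce the EFM-signature of the linear path to a purely right-multiplicative linear equation on $\eTA$ to which the classical Magnus formula applies, and then to derive the fixed-point representation from stationarity. Substituting $X_t = t\bm{x}$, i.e. $\circ dX_u = \bm{x}\,du$, into the signature dynamics \eqref{eq:sig_sde} gives
\begin{equation}
    \frac{d}{du}\ssigX[s, u] = -\G\,\ssigX[s, u] + \ssigX[s, u]\otimes \bm{x}, \qquad \ssigX[s, s] = \emptyword.
\end{equation}
The drift $-\G\,\ssigX[s, u]$ obstructs a direct use of Magnus, so I would perform the gauge transformation $\mathbbm{y}_u := \D[t-u]\,\ssigX[s, u]$ for a fixed endpoint $t \geq s$. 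Since $\frac{d}{du}\D[t-u] = \G\D[t-u]$ by Proposition~\ref{prop_D_ppties}\,(vi), $\G$ and $\D$ commute (both are diagonal), and $\D$ is tensor-multiplicative by Proposition~\ref{prop_D_ppties}\,(vii), the two drift contributions cancel and one obtains
\begin{equation}
    \frac{d}{du}\mathbbm{y}_u = \mathbbm{y}_u \otimes \bm{A}(u), \qquad \bm{A}(u) = \D[t-u]\,\bm{x}, \qquad \mathbbm{y}_s = \emptyword.
\end{equation}
Since $\mathbbm{y}_t = \ssigX[s, t]$, applying the classical right-multiplicative Magnus formula \eqref{eq:magnus_exp}--\eqref{eq:magnus_omega} level-by-level yields the first identity in \eqref{eq:linear_path_stat_sig_magnus}. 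The infinite-interval case then follows by sending $s \to -\infty$: because $|\bm{A}(u)| = |\D[t-u]\bm{x}|$ decays exponentially in $t-u$ (as $\blambda \succ 0$), each iterated integral $\bm{\Omega}_k^{\blambda, \bm{x}}(-\infty, t)$ converges absolutely.

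For the series representation $\ssigX[t] = \sum_{k \geq 0}{\bf H}_{\bm{x}}^k\emptyword$, I would invoke stationarity. The deterministic path $X_t = t\bm{x}$ has (constant) stationary increments, so by Theorem~\ref{thm:stationarity} the map $t \mapsto \ssigX[t]$ is constant in $t$. Setting $\frac{d}{dt}\ssigX[t] = 0$ in \eqref{eq:sig_sde} gives the algebraic fixed-point equation $\G\,\ssigX[t] = \ssigX[t] \otimes \bm{x}$. Writing $\ssigX[t] = \emptyword + \bar{\mathbbm{z}}$ with $\bar{\mathbbm{z}}$ supported on words of length $\geq 1$ and applying $\G^{\dag}$ (which inverts $\G$ on that subspace by \eqref{eq:G_inv}) rewrites this as $\bar{\mathbbm{z}} = {\bf H}_{\bm{x}}\emptyword + {\bf H}_{\bm{x}}\bar{\mathbbm{z}}$, and iterating yields $\bar{\mathbbm{z}} = \sum_{k \geq 1}{\bf H}_{\bm{x}}^k\emptyword$, proving the second identity. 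The explicit coefficient formula \eqref{eq:linear_path_coefs_explicit} is supplied by Lemma~\ref{lem:linear_path}, and the equal-$\blambda$ case \eqref{eq:linear_path_stat_sig_magnus_equal} is a direct specialisation of the previously established \eqref{eq:linear_path_stat_sig}.

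The main subtlety I anticipate is legitimising the Magnus expansion on $\eTA$, which is not a Banach algebra and where convergence in the classical operator-norm sense is unavailable. The key observation that bypasses this is the grading: $\bm{A}(u)$ lives in tensor level $1$, so each $\bm{\Omega}_k^{\blambda, \bm{x}}(s, t)$ has components only in levels $\geq k$. Truncating at any fixed level $N$ thus collapses Magnus's series to a finite sum, and the truncated problem is a standard finite-dimensional linear ODE on $\tTA{N}$ where the classical formula applies without ambiguity; the full expansion is then a well-defined element of $\eTA$ read off level-by-level. The same grading argument justifies the term-by-term convergence of $\sum_{k \geq 0}{\bf H}_{\bm{x}}^k\emptyword$, since ${\bf H}_{\bm{x}}$ raises word length by exactly one and thus ${\bf H}_{\bm{x}}^k\emptyword$ contributes purely to level $k$.
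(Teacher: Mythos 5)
Your proposal is correct and follows essentially the same route as the paper: a gauge transformation by the dilation semigroup turns the drifted signature ODE into a right-multiplicative linear equation solved level-by-level by the Magnus formula, and the representation $\sum_{k\geq 0}{\bf H}_{\bm x}^k\emptyword$ comes from iterating the resulting degree-raising fixed-point relation. The only cosmetic differences are that your gauge $\mathbbm{y}_u=\D[t-u]\ssigX[s, u]$ produces $\bm A(u)=\D[t-u]\bm x$ directly (the paper gauges with $\D[s-u]$ and re-discounts at the end via Proposition~\ref{prop_D_ppties}\,(vii)), and you derive the fixed-point equation $\G\,\ssigX[t]=\ssigX[t]\otimes\bm x$ from stationarity rather than from Picard iteration of the integral equation combined with $\int_{-\infty}^t\D[t-u]\bell\,du=\G^\dag\bell$; both lead to the same iteration of ${\bf H}_{\bm x}$.
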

\begin{proof}
    In the case of the linear path, the EFM-signature equation \eqref{eq:sig_as_D} reads
    \begin{equation}\label{eq:linear_path_D_sig}
        \ssigX[s, t] = \int_{s}^t \D[t - u](\ssigX[s, u]\otimes x) du, \quad t \geq s.
    \end{equation}
    This implies that $\mathbbm{y}_t := \D[s - t]\ssigX[s, t]$ satisfies the linear equation 
    \begin{align}
        d{\mathbbm{y}}_t &= \mathbbm{y}_t\otimes (\D[s - t]\bm x)\, dt, \quad  t \geq s,\\
        \mathbbm{y}_0 &= \emptyword,
    \end{align}
    so that 
    $$
    \ssigX[s, t] = \D[t - s]{\mathbbm{y}}_t = \D[t - s]\exp^\otimes\left(\sum_{k \geq 1}\tilde{\bm{\Omega}}_k^{\blambda, \bm x}(s, t)\right),
    $$
    where $\tilde{\bm{\Omega}}_k^{\blambda, \bm x}(s, t)$ are given by \eqref{eq:magnus_omega} with $\bm A(u) = \D[s - u]\bm x$. Thanks to Proposition~\ref{prop_D_ppties}\,{(vii)}, one has 
    $$
    \D \exp^{\otimes}(\bell) = \exp^{\otimes}(\D\bell), \quad h \in \R,\quad \bell \in\eTA.
    $$
    Applying again Proposition~\ref{prop_D_ppties}\,{(vii)}, we obtain that
    $$
    \D[t-s]\tilde{\bm{\Omega}}_k^{\blambda, \bm x}(s, t) = {\bm{\Omega}}_k^{\blambda, \bm x}(s, t), \quad k \geq 1,
    $$
    so that the exponential representation in \eqref{eq:linear_path_stat_sig_magnus} holds. 

    The second part of \eqref{eq:linear_path_stat_sig_magnus} follows from the application of the Picard iterations to \eqref{eq:linear_path_D_sig} and noticing that for all $\bell$ such that $\bell^{\emptyword} = 0$, we have
    \begin{equation}\label{eq:D_int}
        \int_{s}^t \D[t-u]\bell\, du = \G^\dag(\mathrm{Id} - \D[t-s])\bell, \quad\int_{-\infty}^t \D[t-u]\bell\, du = \G^\dag\bell.
    \end{equation}
    
    In the case $\blambda^1 = \ldots = \blambda^d = \lambda$, the generator $\bm A(u)$ becomes commutative, so that $[\bm A(u_1), \bm A(u_2)] \equiv 0$ and $\bm \Omega_k^{\blambda, \bm x}(s, t) \equiv 0$ for $k \geq 2$. The formula \eqref{eq:linear_path_stat_sig_magnus_equal} follows immediately from \eqref{eq:D_int} with $\bell = \bm x$.
\end{proof}
\begin{corollary}
    The EFM-signature of a piecewise-linear path with breakpoints $t_0 < t_1 < \ldots < t_n$ is given recursively by
\begin{equation}\label{eq:piecewise_linear_path_stat_sig_piece_magnus}
       \ssigX[t_0, t_n] = (\D[t_n - t_{n-1}]\ssigX[t_0, t_{n-1}]) \otimes \exp^\otimes\left(\sum_{k \geq 1}\bm{\Omega}_k^{\blambda, \frac{X_{t_n} - X_{t_{n-1}}}{t_n - t_{n-1}}}(t_{n-1}, t_n)\right). 
    \end{equation}
\end{corollary}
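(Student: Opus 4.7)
The plan is to derive \eqref{eq:piecewise_linear_path_stat_sig_piece_magnus} as an immediate consequence of Chen's identity (Proposition~\ref{prop:Chen_FM}) combined with Theorem~\ref{thm:linear_path_magnus}, with essentially no new computation required. First I would apply Chen's identity with $s = t_0$, $u = t_{n-1}$, and $t = t_n$ to split the EFM-signature into a ``discounted past'' factor and a ``last segment'' factor:
\[
    \ssigX[t_0, t_n] = \bigl(\D[t_n - t_{n-1}]\ssigX[t_0, t_{n-1}]\bigr) \otimes \ssigX[t_{n-1}, t_n].
\]

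Next, on the final segment $[t_{n-1}, t_n]$ the path is linear with constant slope $\bm{x} := (X_{t_n} - X_{t_{n-1}})/(t_n - t_{n-1})$, so that $dX_u = \bm{x}\,du$ and equation~\eqref{eq:linear_path_D_sig} used in the proof of Theorem~\ref{thm:linear_path_magnus} holds verbatim on this interval. Running the same Magnus argument then gives
\[
    \ssigX[t_{n-1}, t_n] = \exp\left(\otimes \sum_{k \geq 1}\bm{\Omega}_k^{\blambda, \bm x}(t_{n-1}, t_n)\right),
\]
and substituting this back into the Chen decomposition yields the desired identity \eqref{eq:piecewise_linear_path_stat_sig_piece_magnus}.

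The only subtle point to verify is that Theorem~\ref{thm:linear_path_magnus}, stated for the reference path $X_t = t\bm x$, applies to an arbitrary linear segment positioned away from the origin. This is a short check: the Magnus generator $\bm A(u) = \D[t_n - u]\bm x$ depends only on the time lag $t_n - u$, so the change of variables $v = u - t_{n-1}$ in~\eqref{eq:magnus_omega} shows that $\bm{\Omega}_k^{\blambda, \bm x}(t_{n-1}, t_n)$ coincides with $\bm{\Omega}_k^{\blambda, \bm x}(0, t_n - t_{n-1})$, confirming the required translation invariance. I do not anticipate any substantive obstacle beyond this bookkeeping; the corollary is really just the statement that iterating Chen's identity reduces the computation to a single Magnus exponential per segment.
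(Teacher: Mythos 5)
Your proposal is correct and is exactly the argument the paper intends: the corollary is an immediate combination of Chen's identity (Proposition~\ref{prop:Chen_FM}) with $s=t_0$, $u=t_{n-1}$, $t=t_n$ and the Magnus formula of Theorem~\ref{thm:linear_path_magnus} applied to the last linear segment. Your extra check that $\bm A(u)=\D[t_n-u]\bm x$ depends only on the time lag, so the segment formula is translation-invariant (equivalently, the EFM-signature over $[t_{n-1},t_n]$ depends only on the increments there), is a sensible bit of bookkeeping that the paper leaves implicit.
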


\subsection{Time reparametrization}
It is well-known that the signature of path is invariant under reparametrization. Namely, for any smooth reparametrization $\psi\colon\ [0, T] \to [0, T]$, denote $\tilde X_t := X_{\psi_t}$. It follows that
$$
\int_{0 < u_1 < \cdots < u_n < T} \circ d X_{u_1} \otimes \cdots \otimes \circ d X_{u_n}  = \int_{0 < u_1 < \cdots < u_n < T} \circ d\tilde X_{u_1} \otimes \cdots \otimes \circ d\tilde X_{u_n}.
$$
Similarly, for the EFM-signatures,
\begin{align*}
    &\int_{u_1 < \cdots < u_n < T}  e^{-\blambda(T - u_1)}\odot d X_{u_1} \otimes\circ  \cdots  \otimes \circ e^{-\blambda(T - u_n)} \odot d X_{u_n} \\ = &\int_{u_1 < \cdots < u_n < T}  e^{-\blambda(\psi_T - \psi_{u_1})}\odot d\tilde X_{u_1} \otimes\circ \cdots \otimes \circ e^{-\blambda(\psi_T - \psi_{u_n})}\odot d\tilde X_{u_n}.
\end{align*}
{
Thus, the EFM-signature is not invariant under time reparametrization of the path $X$, since time explicitly appears in its definition. However, if one views the EFM-signature as a transformation of both time $\tau_\alpha$ and the path $X_\alpha$, parametrized by $\alpha \in (-\infty, T]$, then:
\begin{equation}
    \mathrm{EFM}^{\blambda, n}_T(\tau, X) := \int_{\alpha_1 < \cdots < \alpha_n < T}  e^{-\blambda(\tau_T - \tau_{\alpha_1})} \odot d X_{\alpha_1} \circ \otimes \cdots \otimes \circ e^{-\blambda(\tau_T - \tau_{\alpha_n})} \odot  d X_{\alpha_n},
\end{equation}
it is invariant under reparametrizations of the concatenated path $(\tau, X)$; that is, for $\tilde{\tau}_\alpha = \tau_{\psi(\alpha)}$ and $\tilde{X}_\alpha = X_{\psi(\alpha)}$, where $\psi \colon (-\infty, T] \to (-\infty, T]$ is any smooth reparametrization, the following reparametrization invariance property holds:
$$
    \mathrm{EFM}^{\blambda, n}_T(\tau, X) = \mathrm{EFM}^{\blambda, n}_T(\tilde\tau, \tilde X)
$$
}
To simplify notation, we will always assume that the parametrization of the path is chosen so that $\tau_t = \alpha$ for all $\alpha \in \R$.

\subsection{Uniqueness of the EFM-signature}

{The uniqueness of the signature means that it determines the path up to tree-like equivalence, as shown by \cite{hambly2010uniqueness} for paths of bounded variation and by \citet{boedihardjo2016signatureroughpath} in the rough path setting. In particular, the signature of the time-augmented path $\widehat{X}_t = (t, X_t),\ t \in [0, T]$ determines the path uniquely. In this subsection, we state the uniqueness result for the EFM-signature.}
For notational simplicity, when working with time-augmented paths $\widehat{X} = (t, X_t)$, we denote the time component by the letter $\word{0}$, and the spatial components of $X$ by the letters $\word{1}, \ldots, \word{d}$.


{
\begin{proposition}\label{prop:uniqueness}
    Let $X = (X_t)_{-\infty < t \leq T}$ be a continuous $\R^d$-valued semimartingale satisfying \eqref{eq:lift_assumption}, and let $\blambda \in \R^d$ with $\blambda \succ 0$. Then, the EFM-signature $\ssigX[T]$ determines the modified path
    \(
        X^T = (X_t^T)_{t \leq T},
    \)
    where \(
    X_t^T = \int_{-\infty}^t \D[T-u]\, dX_u,
    \)
    uniquely up to tree-like equivalence. Moreover, if $X^T$ has at least one strictly monotone component, then $\ssigX[T]$ determines $X = (X_t)_{t \leq T}$ up to a constant shift.
\end{proposition}

\begin{proof}
    The proof of the first statement follows from \cite[Theorem~1.1]{boedihardjo2016signatureroughpath} and Proposition~\ref{Prop:Markovian_lift}, since the process $Z^T$ introduced in Proposition~\ref{Prop:Markovian_lift} is a time reparametrization of $X^T$. If $X^T$ has a strictly monotone component, then it is determined uniquely up to a constant shift, and $X$ can be reconstructed via
    \[
        X_t = X_s + \int_s^t \D[u - T]\, dX_u^T,
    \]
    for all $s, t \leq T$.
\end{proof}

\begin{corollary}\label{cor:uniqueness_time_aug}
    Let $X = (X_t)_{-\infty < t \leq T}$ be a continuous $\R^d$-valued semimartingale satisfying \eqref{eq:lift_assumption}, let $\blambda \in \R^{d+1}$ with $\blambda \succ 0$, and let $\ssigX[T][\widehat{X}]$ denote the EFM-signature of its time-augmented path $\widehat{X}_t = (t, X_t)$. Then, $\ssigX[T][\widehat{X}]$ determines $X = (X_t)_{t \leq T}$ uniquely up to a constant shift.
\end{corollary}

\begin{proof}
    The first coordinate of the time-augmented modified path $\widehat{X}^T$ corresponding to the time component is given by
    \[
        \widehat{X}_t^{T,0}
        =
        \int_{-\infty}^t e^{-\blambda^0(T-u)}\, du
        =
        \frac{1}{\blambda^0} e^{-\blambda^0(T-t)},
    \]
    which is a strictly increasing function of $t$. Applying Proposition~\ref{prop:uniqueness} completes the proof.
\end{proof}
}

The uniqueness result allows us to conclude that any functional of the path $X$ can be expressed as a function of the EFM-signature of this path:
$$
F\big((X_s)_{s \leq t}\big) = \tilde{F}\big(\ssigX[t][\widehat{X}]\big),
$$
for some function $\tilde{F}$, provided that the EFM-signature $\ssigX[t][\widehat{X}]$ captures all the information contained in the path $(X_s)_{s \leq t}$.

\subsection{Linearization property}

Another important operation on $\eTA$ is the shuffle product, which allows for the linearization of products of two linear functionals of the signature; see \cite*[Theorem 2.15]{lyons2007differential}. For basis vectors $\word{v}, \word{w} \in \eTA$ and letters $\word{i}, \word{j}$, the shuffle product is defined recursively by
\begin{align*}
    (\word{v} \word{i}) \shuprod (\word{w} \word{j}) &
    = (\word{v} \shuprod (\word{w} \word{j})) \word{i} + ((\word{v} \word{i}) \shuprod \word{w}) \word{j}, \\
    \word{w} \shuprod \emptyword &= \emptyword \shuprod \word{w} = \word{w},
\end{align*}
and extended to $\eTA$ by linearity. The following proposition establishes the shuffle property for EFM-signatures.

\begin{proposition}\label{prop:shuffle}
    If $\bell_1, \bell_2 \in \TA$, then $\bell_1 \shuprod\bell_2 \in \TA$ and
    \begin{equation}
        \bracketssigX{\bell_1}\bracketssigX{\bell_2} = \bracketssigX{\bell_1\shuprod\bell_2}.
    \end{equation}
\end{proposition}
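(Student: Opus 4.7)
The plan is to prove the shuffle identity by induction on the total word length, combining the Stratonovich product rule with the EFM-signature dynamics from \eqref{eq:def_stat_sig_coef}.

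By bilinearity of $\shuprod$ and of the pairing $\langle \cdot, \ssigX[t] \rangle$, and since $\bell_1 \shuprod \bell_2 \in \TA$ whenever $\bell_1, \bell_2 \in \TA$ (the shuffle of words of lengths $m$ and $n$ is a finite combination of words of length $m+n$), it suffices to prove the identity for basis words $\bell_1 = \word{u}$ and $\bell_2 = \word{v}$. The plan is to first establish it on a finite horizon, showing $\sigX[s, t]^{\blambda, \word{u}} \sigX[s, t]^{\blambda, \word{v}} = \langle \word{u} \shuprod \word{v}, \ssigX[s, t] \rangle$ for all $-\infty < s \leq t \leq T$, and then pass to the limit $s \to -\infty$ using the componentwise convergence $\ssigX[s, t] \to \ssigX[t]$ guaranteed by the definition of the improper integral and by Theorem~\ref{thm:fm_sig_existence}.

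The induction runs on $N = |\word{u}| + |\word{v}|$. The cases $N = 0$ or with one empty word are immediate from $\sigX^{\blambda, \emptyword} \equiv 1$ and the neutrality of $\emptyword$ under $\shuprod$. For the inductive step with $\word{u} = \word{u'}\word{i}$ and $\word{v} = \word{v'}\word{j}$ both non-empty, applying the Stratonovich product rule to the left-hand side, substituting the dynamics $d \sigX[s, t]^{\blambda, \word{u}} = -\lambda^\word{u} \sigX[s, t]^{\blambda, \word{u}}\, dt + \sigX[s, t]^{\blambda, \word{u'}} \circ dX_t^i$ (and symmetrically for $\word{v}$), and invoking the induction hypothesis for the pairs $(\word{u'}, \word{v})$ and $(\word{u}, \word{v'})$ yields
\begin{align*}
    d\bigl(\sigX[s, t]^{\blambda, \word{u}} \sigX[s, t]^{\blambda, \word{v}}\bigr) &= -(\lambda^\word{u} + \lambda^\word{v})\, \sigX[s, t]^{\blambda, \word{u}} \sigX[s, t]^{\blambda, \word{v}}\, dt \\
    & \quad + \langle \word{u'} \shuprod \word{v}, \ssigX[s, t] \rangle \circ dX_t^i + \langle \word{u} \shuprod \word{v'}, \ssigX[s, t] \rangle \circ dX_t^j.
\end{align*}

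The next step is to show that $\langle \word{u} \shuprod \word{v}, \ssigX[s, t] \rangle$ satisfies the same SDE. Expanding via the recursive shuffle formula $\word{u} \shuprod \word{v} = (\word{u'} \shuprod \word{v})\word{i} + (\word{u} \shuprod \word{v'})\word{j}$ and applying the dynamics component-wise to each $\sigX^{\blambda, \word{w}\word{i}}$ arising in the expansion produces an identical expression, provided the mean-reversion coefficient of every such component equals $\lambda^\word{u} + \lambda^\word{v}$. This is the crucial observation: since the shuffle preserves the multiset of letters, every word $\word{w}$ appearing in $\word{u'} \shuprod \word{v}$ satisfies $\lambda^\word{w} = \lambda^\word{u'} + \lambda^\word{v}$ and hence $\lambda^{\word{w}\word{i}} = \lambda^\word{u} + \lambda^\word{v}$, with the analogous statement for the other summand. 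Both sides of the proposed identity thus satisfy the same linear Stratonovich SDE, and both vanish at $t = s$ since every involved word has length $\geq 1$. Uniqueness of the linear SDE yields equality on $[s, T]$, and letting $s \to -\infty$ concludes the proof. The main subtlety lies in the matching of mean-reversion rates on both sides, which succeeds precisely because $\lambda^\word{v} = \sum_k \lambda^{i_k}$ is additive over the letters of $\word{v}$—this is the algebraic reason the exponential weighting of the EFM-signature is compatible with the shuffle structure.
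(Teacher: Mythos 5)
Your proposal is correct and follows essentially the same route as the paper's proof: induction on $|\word{u}|+|\word{v}|$, the Stratonovich product rule applied to the component dynamics \eqref{eq:def_stat_sig_coef}, and the matching of the mean-reversion rate $\lambda^{\word{u}}+\lambda^{\word{v}}$ with the rates of the words appearing in the shuffle (which the paper leaves implicit but which you rightly identify as the crux). The only cosmetic differences are that you first argue on a finite horizon $[s,T]$ and pass to the limit $s\to-\infty$, and you invoke uniqueness of the linear SDE where the paper directly identifies the resulting integral with $\bracketssigX{(\word{v}\shuprod\word{\tilde u})\word{i} + (\word{u}\shuprod\word{\tilde v})\word{j}}$ via the variation-of-constants representation; both are valid.
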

\begin{proof}
{
    The proof follows from Proposition~\ref{Prop:Markovian_lift} and the shuffle property of the standard signature.
}
\end{proof}

\subsection{Universal approximation theorem} 
The goal of this section is to show that a sufficiently large class of functions of path $F((X_t)_{t \leq T})$ can be approximated by a linear functional of the EFM-signature $\bracketssigX[T][\widehat X]{\bell}$ of the time-augmented path $\widehat X_t = (t, X_t)$. In particular, if the approximation holds for $F$, the time-invariant output signal \eqref{eq:time_invariance} is approximated uniformly in $t\in\R$, i.e.,
\begin{equation}
    Y_t = F((X_s)_{s\leq t}) \approx \bracketssigX[t][\widehat X]{\bell}, \quad t \in\R.
\end{equation}

As for signatures, this result is based on the Stone--Weierstrass approximation theorem \cite[Theorem 7.32]{rudin1976principles}.

\begin{theorem}[Stone--Weierstrass]
    Let $K \subset \mathcal{X}$ be a compact set and let $\mathcal{A}$ denote the class of functions $f\colon\ \mathcal{X} \to \R$ satisfying:
    \begin{enumerate}
        \item Every $f \in \mathcal{A}$ is continuous;
        \item the constant function $1 \in \mathcal{A}$;
        \item if $f, g \in \mathcal{A}$, then $fg \in \mathcal{A}$;
        \item for $x, y \in \mathcal{X}$, such that $x \not= y$, there exists $f \in \mathcal{A}$, such that $f(x) \not= f(y)$.
    \end{enumerate}
    Then, for any continuous function $F\colon\ \mathcal{X} \to \R$ and for any $\epsilon > 0$, there exists $f \in \mathcal{A}$, such that $|F(x) - f(x)| < \epsilon$ for all $x \in K$.
\end{theorem}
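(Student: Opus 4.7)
The plan is to pass to the uniform closure $\mathcal{B}$ of $\mathcal{A}$ inside $C(K,\R)$, show that $\mathcal{B}$ is a closed algebra which is also a lattice (stable under pointwise $\max$ and $\min$), build a good approximant of $F$ inside $\mathcal{B}$ by interpolation at pairs of points combined with lattice operations over a compact cover, and finally replace the approximant by an element of $\mathcal{A}$ itself using the definition of the uniform closure. As a preliminary, I would verify that $\mathcal{B}$ inherits properties (1)--(3): since $K$ is compact, uniform convergence is compatible with sums and products of bounded continuous functions, so $\mathcal{B}$ is again an algebra containing the constants, and property (4) is inherited because $\mathcal{A} \subset \mathcal{B}$.

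The crucial analytic step, which I expect to be the main obstacle, is to show that $\mathcal{B}$ is closed under the operation $f \mapsto |f|$. I would do this by approximating the scalar function $t \mapsto |t|$ uniformly on a compact interval $[-M,M]$, with $M = \sup_{x \in K} |f(x)|$, by a sequence of polynomials $p_n$. A classical construction writes $|t| = \sqrt{t^{2}}$ and uses the uniformly convergent binomial expansion of $\sqrt{1-u}$ on $[0,1]$ applied to $u = 1 - t^{2}/M^{2}$. Since the constants lie in $\mathcal{A}$ by (2) and $\mathcal{B}$ is closed under products and sums, substituting $f$ into $p_n$ yields a sequence of elements of $\mathcal{B}$ converging uniformly to $|f|$, hence $|f| \in \mathcal{B}$. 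The identities
\begin{equation*}
    \max(f,g) = \tfrac{1}{2}(f+g+|f-g|), \qquad \min(f,g) = \tfrac{1}{2}(f+g-|f-g|)
\end{equation*}
then show that $\mathcal{B}$ is a vector lattice.

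With the lattice property in hand, I would run the standard two-stage compactness argument. Using (2) and (4), for each pair of distinct points $x \neq y$ in $K$, a suitable affine combination of a separating function produces $h_{x,y} \in \mathcal{A}$ with $h_{x,y}(x) = F(x)$ and $h_{x,y}(y) = F(y)$. Fixing $x$ and varying $y$, the open sets $\{z \in K : h_{x,y}(z) < F(z) + \varepsilon\}$ cover $K$ by continuity; compactness yields a finite subcover and one sets $g_x := \min_i h_{x,y_i} \in \mathcal{B}$, so that $g_x(x) = F(x)$ and $g_x < F + \varepsilon$ throughout $K$. A symmetric argument over a finite cover of $K$ by neighborhoods of points $x_j$ on which $g_{x_j} > F - \varepsilon$ gives $g := \max_j g_{x_j} \in \mathcal{B}$ with $\sup_{z \in K} |g(z) - F(z)| < \varepsilon$.

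Since $\mathcal{B}$ is by definition the uniform closure of $\mathcal{A}$, one finally picks $f \in \mathcal{A}$ with $\sup_{z \in K} |f(z) - g(z)| < \varepsilon$; running the entire argument with $\varepsilon/2$ in place of $\varepsilon$ from the start gives $|F(x) - f(x)| < \varepsilon$ for all $x \in K$. The only step requiring genuine analysis is the polynomial approximation of $|t|$; everything else is algebra, point-set topology, and a double compactness extraction.
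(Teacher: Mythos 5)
The paper does not prove this statement: it is the classical Stone--Weierstrass theorem, quoted verbatim with a citation to Rudin [Theorem 7.32], and your argument (uniform closure, lattice property via polynomial approximation of $t\mapsto|t|$, then the double compactness extraction with $\min_i h_{x,y_i}$ and $\max_j g_{x_j}$) is precisely the standard textbook proof found in that reference, so it is correct and matches the intended source. The only point worth flagging is that your proof repeatedly uses closure of $\mathcal{A}$ under sums and scalar multiples (in the affine interpolants $h_{x,y}$, in substituting $f$ into the polynomials $p_n$, and in the identities $\max(f,g)=\tfrac12(f+g+|f-g|)$), which is not literally among the listed hypotheses (1)--(4); it is part of the usual ``subalgebra'' assumption and does hold in the paper's application, where $\mathcal{A}$ is the set of linear functionals $\mathbf{X}\mapsto\bracketssigX[T][\widehat X]{\bell}$ and is a vector space by construction.
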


Since for standard signatures on a finite interval $[0, T]$, the natural path space is the space of rough paths $\mathscr{C}^{\alpha}([0, T], \R^d)$, see, for example, \cite{cuchiero2022theocalib}, {we will use the space of weighted weakly geometric rough paths $\mathcal{X} = \mathscr{C}^{\alpha, \brho}_{g}((-\infty, T], \R^d)$, with $0 \prec \brho \prec \blambda$, i.e., the rough paths in $\mathscr{C}^{\alpha, \brho}((-\infty, T], \R^d)$ satisfying
\[
    \mathrm{Sym}(\mathbb{X}^{2}_{s,t}) =
    \frac{1}{2}\mathbb{X}^{1}_{s,t} \otimes \mathbb{X}^{1}_{s,t},
\]
in order to ensure the shuffle property of the signature,} and we consider functionals
\begin{equation}\label{eq:sig_as_rp_functional}
    F\colon\ {\mathscr{C}^{\alpha, \brho}_{g}}((-\infty, T], \R^d) \to \R, \quad \mathbf{X} \mapsto F(\mathbf{X}),
\end{equation}
continuous in the exponentially weighted rough path topology. Since $\mathbf{X}$ contains all the increments of $X$, the set of functionals of the form \eqref{eq:sig_as_rp_functional} contains all dependencies of the form $Y_t = F((X_t - X_s)_{s \leq t})$.

\begin{theorem}\label{thm:UAT} 
    Consider a compact set $K \subset {\mathscr{C}^{\alpha, \brho}_{g}}((-\infty, T], \R^d)$ and a continuous function $F \colon\ {\mathscr{C}^{\alpha, \brho}_{g}}((-\infty, T], \R^d) \to \R$. Then, for any $\epsilon > 0$, there exists $\bell \in \TA$ such that, for all $\mathbf{X} \in K$,
    \[
        \left| F(\mathbf{X}) - \bracketssigX[T][\widehat X]{\bell} \right| < \epsilon.
    \]
\end{theorem}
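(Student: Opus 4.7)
The natural approach is to apply the Stone--Weierstrass theorem to the family
$$
\mathcal{A} := \bigl\{ \mathbf{X} \mapsto \bracketssigX[T][\widehat X]{\bell} : \bell \in \TA \bigr\},
$$
where $\widehat{\mathbf{X}}$ denotes the time-augmented rough path lift of $\mathbf{X}$, chosen so that $\ssigX[T][\widehat X]$ is well-defined by Theorem~\ref{thm:fm_sig_existence} (taking, e.g., the weight $\lambda^{\word 0}$ for the time component small enough that the extended parameter vector still dominates the weight $\brho$ of the rough path). The plan is to verify the four hypotheses of the Stone--Weierstrass theorem directly on the compact set $K$ and conclude.

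First, every functional in $\mathcal{A}$ is continuous for the exponentially weighted rough path topology: since each $\bell \in \TA$ has finite support, $\mathbf{X} \mapsto \bracketssigX[T][\widehat X]{\bell}$ is a finite linear combination of the maps $\mathbf{X} \mapsto \ssigX[T][\widehat X][\blambda, n]$, which are continuous by the fading memory property (Theorem~\ref{thm:rough_fm}). Second, taking $\bell = \emptyword$ produces the constant function $1$, since $\ssigX[T][\widehat X][\blambda, 0] \equiv 1$. Third, closure under pointwise multiplication follows from the shuffle property (Proposition~\ref{prop:shuffle}):
$$
\bracketssigX[T][\widehat X]{\bell_1} \cdot \bracketssigX[T][\widehat X]{\bell_2} = \bracketssigX[T][\widehat X]{\bell_1 \shuprod \bell_2},
$$
and $\bell_1 \shuprod \bell_2 \in \TA$ whenever $\bell_1, \bell_2 \in \TA$. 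Fourth, point-separation is a consequence of the uniqueness theorem (Theorem~\ref{thm:uniqueness}): two distinct elements $\mathbf{X}, \mathbf{Y} \in K$ yield distinct EFM-signatures of their time-augmentations (since the time-augmentation rules out the constant-shift ambiguity), and hence there exists a basis word $\word{v}$ with $\langle \word{v}, \ssigX[T][\widehat X] \rangle \neq \langle \word{v}, \ssigX[T][\widehat Y] \rangle$.

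The main obstacle is the separation step, because Theorem~\ref{thm:uniqueness} is formulated for continuous semimartingales while the UAT lives on the rough path space $\mathscr{C}^{\alpha, \brho}((-\infty, T], \R^d)$. I would close this gap by one of two routes: either (a) observe that the proof of Theorem~\ref{thm:uniqueness} rests only on the level-one dynamics $d\langle \ssigX[t], \word{i} \rangle = -\lambda^i \langle \ssigX[t], \word{i} \rangle\, dt + dX_t^i$ together with the Laplace-transform injectivity argument, both of which transfer verbatim to the rough path setting via the improper rough convolution of Section~\ref{sect:rough_paths}; or (b) use density of piecewise linear paths in $\mathscr{C}^{\alpha, \brho}((-\infty, T], \R^d)$ (whose EFM-signatures are known explicitly by Theorem~\ref{thm:linear_path_magnus}) and continuity of the EFM-signature from Theorem~\ref{thm:rough_fm} to reduce separation on $K$ to the already established semimartingale/smooth case. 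With the four hypotheses verified, the Stone--Weierstrass theorem delivers the required uniform approximation of $F$ on $K$ by elements of $\mathcal{A}$.
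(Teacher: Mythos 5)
Your proposal is correct and follows essentially the same route as the paper: Stone--Weierstrass applied to the class of finite linear functionals of the EFM-signature, with continuity from Theorem~\ref{thm:rough_fm}, the constant from $\emptyword$, multiplicativity from the shuffle property (Proposition~\ref{prop:shuffle}), and point-separation from the uniqueness theorem (Theorem~\ref{thm:uniqueness}). Your additional remark about the mismatch between the semimartingale setting of Theorem~\ref{thm:uniqueness} and the rough path space on which the UAT is stated is a genuine subtlety that the paper's own proof glosses over, and either of your two proposed fixes is a reasonable way to close it.
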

\begin{proof}
    The proof consists in the application of the Stone--Weierstrass theorem.
    As the class $\mathcal{A}$, we choose the class of finite linear functionals of the EFM-signature:
\begin{equation}
    \mathcal{A}:=\left\{\mathbf{X} \mapsto \bracketssigX[T][\widehat X]{\bell}\colon\ \bell\in\TA \right\}.
\end{equation}

Each functional in this class is continuous by Theorem~\ref{thm:rough_fm}.
The second condition of the Stone--Weierstrass theorem is trivially satisfied since $\bracketssigX[t][\hat X]{\emptyword} \equiv 1$. The third condition holds due to the shuffle property given by Proposition~\ref{prop:shuffle}. The point-separation condition 4 follows from the uniqueness result, Corollary~\ref{cor:uniqueness_time_aug}.
\end{proof}

This result can be seen as a combination of the classical universal approximation theorem for signatures and Theorem~1 in the paper by \cite{Boyd1985FadingMA} considering the approximation of operators with fading memory, which is essentially the continuity in the weighted distance.

\begin{sqremark}
    One can establish a similar result for regular processes by choosing the space of absolutely continuous paths with finite norm $\|X\|_{\mathrm{BV}, \blambda} < \infty$. The continuity of the signature map in this case follows from Proposition~\ref{prop:fm_bv_case}.
\end{sqremark}

{
\subsection{Characteristicness of the EFM-signature}
We recall that a feature map $\Phi$ is called characteristic if the map
\[
    \mu \mapsto \E^{\mu}[\Phi(X)]
\]
is injective. In other words, the expectation of $\Phi(X)$ characterizes the law of the stochastic process $X$. It was shown by \citet{chevyrev2016CharacteristicFunctionsMeasures} that the standard signature $\mathbb{X}_T$ characterizes the law of the path $X = (X_t)_{t \in [0,T]}$ up to tree-like equivalence, provided that the expected signature $\E[\mathbb{X}_T]$ has infinite radius of convergence; that is, the series
\[
    z \mapsto \sum_{n \geq 0} \|\E[\mathbb{X}^n_T]\| z^n
\]
has infinite radius of convergence.

The representation of the EFM-signature as a classical signature given by Proposition~\ref{Prop:Markovian_lift} allows us to derive the characteristic property of the EFM-signature.

\begin{proposition}\label{prop:momentss}
    Let $\P$ and $\Q$ be probability measures on the path space $\mathscr{C}_g^{\alpha, \brho}((-\infty, T], \R^d)$. If their expected time-augmented EFM-signatures are equal and have infinite radii of convergence, i.e.,
    \begin{equation}\label{eq:moments_equality}
        \E^\P[\ssigX[T][\widehat{X}]] = \E^\Q[\ssigX[T][\widehat{X}]],
    \end{equation}
    then $\P = \Q$.
\end{proposition}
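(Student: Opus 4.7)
The plan is a short measure-theoretic argument building directly on the universal approximation theorem (Theorem~\ref{thm:UAT}). First, I would note that since $K$ is compact and every map $\mathbf{X} \mapsto \bracketssigX[T][\widehat X]{\bell}$ is continuous by Theorem~\ref{thm:rough_fm}, each such linear functional is bounded on $K$, so its expectation under any Borel probability measure on $K$ is well-defined. The hypothesis \eqref{eq:moments_equality} is equivalent, component by component in $\eTA$, to
\begin{equation}
\E^\P\!\left[\bracketssigX[T][\widehat X]{\bell}\right] = \E^\Q\!\left[\bracketssigX[T][\widehat X]{\bell}\right], \qquad \bell \in \TA.
\end{equation}

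Next, I would upgrade this equality of linear moments to equality of expectations for every bounded continuous functional on $K$. Fix a continuous $F \colon \mathscr{C}^{\alpha, \brho}((-\infty, T], \R^d) \to \R$ and $\varepsilon > 0$. By Theorem~\ref{thm:UAT} there exists $\bell_\varepsilon \in \TA$ such that
\begin{equation}
\sup_{\mathbf{X} \in K} \left| F(\mathbf{X}) - \bracketssigX[T][\widehat X]{\bell_\varepsilon} \right| < \varepsilon.
\end{equation}
Integrating this uniform bound with respect to $\P$ and $\Q$ and combining with the previous display yields
\begin{equation}
\left|\E^\P[F] - \E^\Q[F]\right| \leq \left|\E^\P[F] - \E^\P\!\left[\bracketssigX[T][\widehat X]{\bell_\varepsilon}\right]\right| + \left|\E^\Q\!\left[\bracketssigX[T][\widehat X]{\bell_\varepsilon}\right] - \E^\Q[F]\right| \leq 2\varepsilon.
\end{equation}
As $\varepsilon$ is arbitrary, $\E^\P[F] = \E^\Q[F]$ for every $F \in C_b(K, \R)$.

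To conclude, I would invoke the classical fact that two Borel probability measures on a metric space which agree on $C_b$ are equal (for instance, by the Portmanteau theorem, or by Riesz representation on the compact metric space $K$). This yields $\P = \Q$. The only genuine work has already been done upstream, in the universal approximation theorem; the main subtlety here is simply checking that $K$ being compact makes all relevant expectations finite so that the $\varepsilon$-approximation argument is legitimate, and that the EFM-signature is coordinate-wise \emph{finite} as a tensor sequence so that equality in $\eTA$ really amounts to equality of all scalar linear moments—both being immediate from the hypotheses.
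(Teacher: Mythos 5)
Your proposal is correct and follows essentially the same route as the paper: reduce to equality of expectations of all continuous functionals via the universal approximation theorem (Theorem~\ref{thm:UAT}), using the uniform $\varepsilon$-approximation on the compact set $K$ and the equality of the linear moments. The only difference is that you spell out the final step (measures agreeing on $C_b(K)$ coincide) and the boundedness of the linear functionals on $K$, which the paper leaves implicit.
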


\begin{proof}
    It follows from \cite[Proposition~6.1]{chevyrev2016CharacteristicFunctionsMeasures} that $\E^\P[\ssigX[T][\widehat{X}]]$ characterizes the law of $\ssigX[T][\widehat{X}]$. The uniqueness result in Corollary~\ref{cor:uniqueness_time_aug} then implies that it also characterizes the law of the path $X$.
\end{proof}

\begin{sqremark}
    It is possible to remove the infinite-radius assumption by using the robust signature introduced by \citet{Chevyrev2022}. Indeed, if $\Psi$ denotes the tensor normalization given in \cite[Definition~12]{Chevyrev2022}, then the expected robust EFM-signature $\E[\Psi(\ssigX[T][\widehat{X}])]$ characterizes the law of the process $X$ \citep[Theorem~26]{Chevyrev2022}.
\end{sqremark}
}

\section{EFM-signature of the time-augmented Brownian motion}\label{sect:SSig_BM}

In this section, we focus on a particularly interesting case where the path is the time-augmented standard Brownian motion $\widehat{W}_t = (t, W_t^1, \ldots, W_t^d)$. {Recall that the EFM-signature of $\widehat{W}$ is well-defined, see Remark~\ref{rem:BM_EFM_sig_well_def}.} While its standard signature $\sig[]$ inherits several key properties of Brownian motion---namely, $\sig[0] = \emptyword$, and $\sig[t, t+h]$ is independent of $\F_t$ with a law depending only on $h$---the EFM-signature $\ssig[]$ can be viewed as an Ornstein--Uhlenbeck process on the Lie group. Indeed, beyond the mean-reverting dynamics 
\begin{equation}\label{eq:fm_sig_bm_sde}
    d\ssig[t] = -\G \ssig[t]\,dt + \ssig[t]\otimes\circ d\widehat{W}_t, \quad t\in\R,
\end{equation}
and the stationarity established in Theorem~\ref{thm:stationarity}, we will show that it is an ergodic Markov process. Moreover, we prove that the non-stationary version $\ssig[0, t]$ converges exponentially fast to the stationary distribution in the Wasserstein distance. Finally, we compute the expected EFM-signature in both the stationary and non-stationary settings, in analogy with the celebrated result of \citet{fawcett}.

\subsection{Markov property for $\ssig[]$}

The EFM-signature $\ssig[]$ can be considered as an adapted $\eTA[d+1]$-valued stochastic process. We equip the state space $\eTA[d+1]$ with the norm  
{
$$
\|\bell\|_1 := \sum_{n \geq 0} \sum_{|\word{v}| = n} |\bell^\word{v}|,
$$
and denote 
$$
{L}^1(\eTA[d+1]) = \left\{ \bell\in\eTA[d+1] \colon\ \|\bell\|_1 < +\infty\right\}.
$$
The space ${L}^1(\eTA[d+1])$ is a Banach space isomorphic to the standard $\ell^1$ space. Moreover, endowed with the tensor product $\otimes$, it forms a Banach algebra: for all $\bell, \bp \in {L}^1(\eTA[d+1])$,
\begin{equation}\label{eq:banach_alg_ppty}
   \|\bell \otimes \bp\|_1 \leq \|\bell\|_1 \|\bp\|_1. 
\end{equation}
}

We denote by {$\mathcal{B}({L}^1(\eTA[d+1]))$} the Borel sigma-algebra generated by the open sets of {${L}^1(\eTA[d+1])$}.

The following lemma ensures that $\ssig$ and $\ssig[0, t]$ are indeed elements of {${L}^1(\eTA[d+1])$} for any $t \in \R$.

\begin{lemma}\label{lemma:L1_bound}
    There exists a constant $M_{\blambda} > 0$, such that
    $$
    \E[\|\ssig[t]\|_1] \leq M_{\blambda}, \quad t \in\R \quad \text{ and } \quad \sup_{t\geq 0}\E[\|\ssig[0, t]\|_1] \leq M_{\blambda}.
    $$
\end{lemma}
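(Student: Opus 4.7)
The plan is to reduce both claims to a componentwise second-moment bound $\E[|\bracketssig[t]{\word v}|^2] \leq C^{|\word v|}/|\word v|!$ for some constant $C = C(\blambda)$, uniformly in $t \in \R$ for the first inequality and in $t \geq 0$ for the second. Since the alphabet $\{\word{0}, \word{1}, \ldots, \word{d}\}$ has $d+1$ letters, there are $(d+1)^n$ words of length $n$, so summing such a bound yields $\E[\|\ssig[t]\|_2^2] \leq \sum_{n \geq 0} (d+1)^n C^n/n! = e^{(d+1)C} < \infty$.

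For the stationary bound, Theorem~\ref{thm:stationarity} reduces matters to $t = 0$. For a word $\word v = \word{v'}\word{i_n}$ with $i_n \in \{1, \ldots, d\}$ and $\word{v'} = \word{v''}\word{i_{n-1}}$, I would convert the Stratonovich iterated integral of Remark~\ref{rmk:sig_iteration_def} into Itô form. Using the SDE $d\bracketssig[u]{\word{v'}} = -\lambda^{\word{v'}} \bracketssig[u]{\word{v'}} du + \bracketssig[u]{\word{v''}} \circ d\widehat W_u^{i_{n-1}}$, a direct computation gives $d\langle \bracketssig{\word{v'}}, W^{i_n} \rangle_u = \mathbb{1}_{\{i_{n-1} = i_n\}} \bracketssig[u]{\word{v''}} du$ (which vanishes whenever $i_{n-1} = 0$), so that
\[
    \bracketssig[0]{\word v} = \int_{-\infty}^0 e^{\lambda^{\word v} u} \bracketssig[u]{\word{v'}} \, dW_u^{i_n} + \tfrac{1}{2} \mathbb{1}_{\{i_{n-1} = i_n\}} \int_{-\infty}^0 e^{\lambda^{\word v} u} \bracketssig[u]{\word{v''}} \, du,
\]
while for $i_n = 0$ the Stratonovich integral is already a Lebesgue integral with no correction. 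Setting $m_n := \sup_{|\word v| = n} \E[|\bracketssig[0]{\word v}|^2]$, which is $t$-independent by stationarity, the Itô isometry bounds the first term by $m_{n-1}/(2 \lambda^{\word v})$ and Cauchy--Schwarz bounds the correction (and the $i_n = 0$ case) by constants times $m_{n-2}/(\lambda^{\word v})^2$ and $m_{n-1}/(\lambda^{\word v})^2$ respectively. With $\lambda^{\word v} \geq n \lambda_{\min}$ where $\lambda_{\min} := \min_i \lambda^i > 0$, Minkowski's inequality then yields
\[
    \sqrt{m_n} \leq \frac{\sqrt{m_{n-1}}}{\sqrt{2 n \lambda_{\min}}} + \frac{\sqrt{m_{n-2}}}{2 n \lambda_{\min}}, \qquad n \geq 1, \qquad m_{-1} := 0,
\]
and an induction starting from $m_0 = 1$ delivers $m_n \leq C^n / n!$ for a suitable $C = C(\blambda)$, establishing the first inequality.

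The non-stationary case follows by the same argument applied to $\tilde m_n(t) := \sup_{|\word v|=n} \E[|\bracketssig[0, t]{\word v}|^2]$: the integrals now range over $[0, t]$, but the key estimates $\int_0^t e^{-2 \lambda^{\word v}(t - u)} du \leq (2 \lambda^{\word v})^{-1}$ and $\int_0^t e^{-\lambda^{\word v}(t - u)} du \leq (\lambda^{\word v})^{-1}$ coincide with those used in the stationary case, so after passing to $\tilde m_n^*(t) := \sup_{0 \leq s \leq t} \tilde m_n(s)$ (which is monotone in $t$) the same recursion holds uniformly in $t \geq 0$ and yields the same constant $M_\blambda$. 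The main obstacle I anticipate is the clean treatment of the Stratonovich--Itô correction in the inductive step: one must verify that the resulting $(n{-}2)$-order term enters with a factor of order $1/(n \lambda_{\min})$, versus $1/\sqrt{n \lambda_{\min}}$ for the main Itô term, so that it remains dominated and the factorial decay survives the recursion, making the series over all $(d+1)^n$ words of length $n$ convergent.
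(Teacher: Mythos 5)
Your proposal is correct and follows essentially the same route as the paper's proof: a word-by-word recursion on second moments, splitting into the drift-letter case, the single Brownian-letter case (It\^o isometry), and the repeated-letter case (It\^o plus Stratonovich correction), then using $\lambda^{\word{v}} \geq n\lambda_{\min}$ to extract factorial decay and summing over the $(d+1)^n$ words of each length. The only cosmetic differences are that you run the recursion on $L^2$-norms via Minkowski rather than on squared moments (arguably cleaner, since the paper's additive split of $\E[(A+B)^2]$ silently drops a cross term) and that you treat general $d$ directly where the paper does $d=1$ and appeals to a straightforward generalization.
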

\begin{proof}
    The proof is given in Appendix~\ref{app:lemma_l1_proof}
\end{proof}

Lemma~\ref{lemma:L1_bound} states that $\ssig[] = (\ssig)_{t \in \R}$ can be considered as a general Markov process with state space {$({L}^1(\eTA[d+1]),\, \mathcal{B}({L}^1(\eTA[d+1])))$}, as shown in the following proposition.

\begin{theorem}\label{thm:Markov}
    The process $\ssig[] = (\ssig)_{t \in\R}$ is a Markov process in the sense that for any measurable bounded function $f: {L^1(\eTA[d+1]) } \to \R$, the Markov property is verified:
    \begin{equation}
        \E\left[f(\ssig) \, \Big| \, \F_s\right] = \E\left[f(\ssig) \, \Big| \, \ssig[s]\right], \quad t \geq s.
    \end{equation}
\end{theorem}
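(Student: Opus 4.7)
The plan is to exhibit $\ssig[t]$ as a deterministic function of $\ssig[s]$ and an $\F_s$-independent random variable, then conclude by factorization of conditional expectations.

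First, I would invoke Chen's identity (Proposition~\ref{prop:Chen_FM}) for $X = \widehat{W}$ with left-endpoint $-\infty$, which yields the decomposition
\begin{equation}
\ssig[t] = (\D[t-s]\ssig[s]) \otimes \ssig[s,t], \qquad -\infty < s \leq t.
\end{equation}
The key structural observation is that $\ssig[s,t]$, by its definition \eqref{eq:fm_sig_def} as iterated Stratonovich integrals on $[s,t]$ with respect to $d\widehat{W}$, is measurable with respect to $\sigma\bigl(\widehat{W}_u - \widehat{W}_s : s \leq u \leq t\bigr)$ and is therefore independent of $\F_s$. By the stationarity of the increments of Brownian motion, the law of $\ssig[s,t]$ depends only on $h = t - s$; I denote this law on $(L^2\eTA[d+1], \mathcal{B}(L^2\eTA[d+1]))$ by $\mu_h$, where membership in $L^2\eTA[d+1]$ almost surely follows from Lemma~\ref{lemma:L1_bound}.

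Second, I would introduce the map
\begin{equation}
\Psi_h \colon L^2\eTA[d+1] \times L^2\eTA[d+1] \to L^2\eTA[d+1], \qquad (\bm{y}, \bm{z}) \mapsto (\D[h]\bm{y}) \otimes \bm{z},
\end{equation}
defined on the (measurable) set of pairs whose image lies in $L^2\eTA[d+1]$. Joint Borel-measurability of $\Psi_h$ follows coordinate by coordinate: $\D[h]$ is a bounded diagonal operator with contraction factors $e^{-\lambda^{\word{v}} h} \leq 1$, while each word-coordinate of the tensor product $\bp \otimes \bq$ is a finite polynomial in the coordinates of $\bp$ and $\bq$. Lemma~\ref{lemma:L1_bound} applied jointly to $\ssig[s]$ and $\ssig[s,t]$ ensures the image $\ssig[t] = \Psi_{t-s}(\ssig[s], \ssig[s,t])$ is in $L^2\eTA[d+1]$ almost surely.

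Finally, for any bounded measurable $f \colon L^2\eTA[d+1] \to \R$, the standard factorization of conditional expectations (Fubini for conditional laws), applied to the $\F_s$-measurable $\ssig[s]$ and the $\F_s$-independent $\ssig[s,t]$, gives
\begin{equation}
\E\bigl[f(\ssig[t]) \mid \F_s\bigr] = \E\bigl[f(\Psi_{t-s}(\ssig[s], \ssig[s,t])) \mid \F_s\bigr] = \int f\bigl(\Psi_{t-s}(\ssig[s], \bm{z})\bigr)\, \mu_{t-s}(d\bm{z}) =: (P_{t-s} f)(\ssig[s]).
\end{equation}
Since the right-hand side is a $\sigma(\ssig[s])$-measurable function, conditioning it further on $\sigma(\ssig[s])$ is an identity, so we conclude $\E[f(\ssig[t]) \mid \F_s] = \E[f(\ssig[t]) \mid \ssig[s]] = (P_{t-s}f)(\ssig[s])$. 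The only non-routine step is checking the joint measurability of $\Psi_h$ and confirming the $L^2$ target space is not an obstruction, which as argued above reduces to the coordinate-wise polynomial structure of the tensor product combined with Lemma~\ref{lemma:L1_bound}.
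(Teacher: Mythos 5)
Your proof is correct and follows essentially the same route as the paper: Chen's identity to write $\ssig[t] = (\D[t-s]\ssig[s])\otimes\ssig[s,t]$, then the factorization of conditional expectations using the $\F_s$-measurability of $\ssig[s]$ and the independence of $\ssig[s,t]$ from $\F_s$. Your additional checks on the measurability of $\Psi_h$ and the $L^2$ target space via Lemma~\ref{lemma:L1_bound} are a welcome elaboration of details the paper leaves implicit.
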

\begin{proof}
    For a measurable bounded $f$ and $t \geq 0$, we have
    \begin{equation}
        \E\left[f(\ssig) \, \Big| \, \F_s\right] = \E\left[f(\D[t - s]\ssig[s]\otimes\ssig[s, t]) \, \Big| \, \F_s\right] = \E\left[f(\D[t - s]\bell\otimes\ssig[s, t]) \, \Big| \, \F_s\right] \bigg|_{\bell = \ssig[s]} = \E\left[f(\D[t - s]\bell\otimes\ssig[s, t])\right] \bigg|_{\bell = \ssig[s]}.
    \end{equation}
    Here we have used the Chen's identity, the measurability of $\ssig[s]$ with respect to $\F_s$ and finally, independence of $\ssig[s, t]$ and $\F_s$.
\end{proof}

{
\begin{sqremark}
    In the proof above, the submultiplicativity \eqref{eq:banach_alg_ppty} is implicitly used to ensure that the product \( \D[t - s]\bell \otimes \ssig[s, t] \in L^1(\eTA[d+1]) \)  for all \( \bell \in L^1(\eTA[d+1]) \), so that the expression \( f(\D[t - s]\bell \otimes \ssig[s, t]) \) is meaningful.
\end{sqremark}
}
\subsection{Ergodicity of $\ssig[]$}

As we have already shown in Theorem~\ref{thm:stationarity} and Theorem~\ref{thm:Markov}, $\ssig$ is a stationary Markov process. The process $(\ssig[0, t])_{t \geq 0}$ corresponds to its nonstationary version, starting from $\ssig[0, 0] = \emptyword$ (i.e., we assume that the underlying path is constant on the negative half-line: $\widehat{W}_t \equiv 0$ for $t < 0$). As a one-dimensional analogy, one can think of the stationary Ornstein--Uhlenbeck process $\int_{-\infty}^t e^{-\lambda(t-s)}\,dW_s$ and its nonstationary {counterpart} $\int_{0}^t e^{-\lambda(t-s)}\,dW_s$.

A natural question arises: How quickly does the law of $\ssig[0, t]$ converge to the stationary distribution $\mathcal{L}(\ssig)$?

In this section, we will show that, exactly as in the Ornstein--Uhlenbeck case, this convergence is geometric in the Wasserstein distance 
\begin{equation}\label{eq:Wasser_def}
    {\mathcal{W}_1(\mu, \nu) = {\inf_{\pi\in \Pi(\mu, \nu)}\int\|\mathbbm{x} - \mathbbm{y}\|_1\,\pi(d\mathbbm{x}, d\mathbbm{y})}, \quad \mu, \nu \in \mathcal{P}(L^1(\eTA[d+1])),}
\end{equation}
where $\Pi(\mu, \nu)$ stands for a set of all coupling of $\mu$ and $\nu$.

More precisely, we will prove that the dependence on the initial condition decays exponentially with rate $\min_{i}\blambda^i$. In particular, this will imply the uniqueness of the invariant distribution and the ergodicity of the EFM-signature process.

\begin{theorem}\label{thm:ergodicity}
Let $W = (W_t)_{t \in \R}$ denote a Brownian motion and let $x = (x_t)_{t \leq 0}$ be a continuous semimartingale. Consider the path obtained by concatenation of $(x_t)_{t \leq 0}$ with $(W_t)_{t \geq 0}$ at $t = 0$:
$$
X_t = \begin{cases}
    x_t, \ &t \leq 0, \\
    x_0 + W_t, \ &t > 0,
\end{cases} \qquad
$$ Assume {that $x$ and $(W_t - W_0)_{t \geq 0}$ are independent} and that the EFM-signature $\widehat{\mathbbm{x}}^{\blambda}_t$ of the time-augmented path $\widehat{x}_t = (t, x_t)$ is well-defined as a process in {${L}^1(\eTA[d+1])$} for $t \leq 0$ with {$\E[\|\widehat{\mathbbm{x}}^{\blambda}_0\|_1] < \infty$}. Let $\ssigX[t][\widehat{X}]$ denote the EFM-signature of $\widehat{X}_t = (t, X_t), \, t\in\R$ and set  $\mu_t = \mathcal{L}(\ssigX[t][\widehat{X}]),\, t \geq 0$. Then,
there exists a unique ergodic measure $\mu_* = \mathcal{L}(\ssig[t])$ and a constant $M_{\blambda} > 0$ such that
\begin{equation}\label{eq:wasserstein_bound}
    {\mathcal{W}_1(\mu_t, \mu_*) \leq M_{\blambda}\left(\E\left[\|\widehat{\mathbbm{x}}^{\blambda}_0\|_1\right] + M_{\blambda}\right)e^{-\blambda^{\mathrm{min}} t}, }
\end{equation}
where $\blambda^{\mathrm{min}} = \min\limits_i \blambda^i$. In particular, taking $x \equiv 0$, one obtains
\begin{equation}
    {\mathcal{W}_1(\mathcal{L}(\ssig[0, t]),\, \mathcal{L}(\ssig[t])) \leq M_{\blambda}(1+M_{\blambda}) e^{-\blambda^{\min} t}, }
\end{equation}
\end{theorem}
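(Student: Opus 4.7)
My plan is to construct an explicit coupling between $\ssigX[\cdot][\widehat X]$ and $\ssig[\cdot]$ by sharing the Brownian increments on $[0,\infty)$, then use Chen's identity to split past from future and reduce the Wasserstein bound to a second-moment estimate of a single tensor product. Since $x$ is independent of $(W_t-W_0)_{t\ge 0}$, the two-sided Brownian motion underlying $\ssig[\cdot]$ may be chosen so that its increments on $[0,\infty)$ coincide with those of $X$. Proposition~\ref{prop:Chen_FM} applied at $u=0$ then yields
\begin{equation*}
\ssigX[t][\widehat X] = \bigl(\D[t]\widehat{\mathbbm{x}}^{\blambda}_0\bigr)\otimes\ssig[0,t],\qquad \ssig[t] = \bigl(\D[t]\ssig[0]\bigr)\otimes\ssig[0,t].
\end{equation*}
Setting $\mathbbm{z}:=\widehat{\mathbbm{x}}^{\blambda}_0-\ssig[0]$, the coupling gives
\begin{equation*}
\ssigX[t][\widehat X] - \ssig[t] = \D[t]\mathbbm{z}\otimes\ssig[0,t],
\end{equation*}
with $\mathbbm{z}^{\emptyword}=0$ (both signatures have empty-word entry $1$) and $\mathbbm{z}$ independent of $\ssig[0,t]$ by independence of past and future Brownian increments.

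Because $\mathbbm{z}^{\emptyword}=0$, one has $\lambda^{\word{u}}\ge\lambda^{\min}$ for every word $\word{u}$ with $\mathbbm{z}^{\word{u}}\neq 0$, hence the pointwise contraction $\|\D[t]\mathbbm{z}\|_2^2\le e^{-2\lambda^{\min} t}\|\mathbbm{z}\|_2^2$. The heart of the proof is the $L^2$-estimate
\begin{equation*}
\E\bigl[\|\D[t]\mathbbm{z}\otimes\ssig[0,t]\|_2^2\bigr]\le M_{\blambda}\,e^{-2\lambda^{\min} t}\,\E[\|\mathbbm{z}\|_2^2],
\end{equation*}
which I would establish by projecting both factors onto each level, using the exact identity $\|A\otimes B\|_2=\|A\|_2\|B\|_2$ valid for pure-level tensors $A,B$ (every length-$n$ word admits a unique factorization as a prefix of length $j$ and a suffix of length $n-j$), applying Cauchy–Schwarz across the at-most-$n$ possible splittings of each word, invoking independence of $\mathbbm{z}$ and $\ssig[0,t]$, and finally absorbing the residual series into the uniform bound $\sup_{t\ge 0}\E[\|\ssig[0,t]\|_2^2]\le M_{\blambda}$ from Lemma~\ref{lemma:L1_bound}.

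Combined with the elementary inequality $\E[\|\mathbbm{z}\|_2^2]\le 2\bigl(\E[\|\widehat{\mathbbm{x}}^{\blambda}_0\|_2^2]+\E[\|\ssig[0]\|_2^2]\bigr)\le 2\bigl(\E[\|\widehat{\mathbbm{x}}^{\blambda}_0\|_2^2]+M_{\blambda}\bigr)$ and the fact that the constructed pairing defines a valid transport plan between $\mu_t$ and $\mu_*$, this delivers exactly the claimed bound on $\mathcal{W}_2^2(\mu_t,\mu_*)$. Uniqueness of the invariant measure $\mu_*$ (and ergodicity in $\mathcal{W}_2$) follows by running the estimate with $\widehat{\mathbbm{x}}^{\blambda}_0$ drawn from any competing candidate invariant law: the right-hand side tends to zero while $\mathcal{L}(\ssigX[t][\widehat X])$ remains fixed by invariance, forcing agreement with $\mu_*$.

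The main obstacle is the core $L^2$-estimate above: the Hilbert-space norm $\|\cdot\|_2$ is \emph{not} submultiplicative under $\otimes$ on $\eTA$, so multi-level cross-terms between distinct splittings of the same word need careful control. The argument goes through because each Cauchy–Schwarz costs at most a linear factor in word length, which is defeated by the linear-in-length growth $\lambda^{\word{u}}\ge|\word{u}|\lambda^{\min}$ appearing in the exponential weights of $\D[t]$, with summability of the residual series being precisely the content of Lemma~\ref{lemma:L1_bound}.
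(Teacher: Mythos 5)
Your proposal follows essentially the same route as the paper: couple the two signatures through a common Brownian motion on $[0,\infty)$, apply Chen's identity at $u=0$ to write the difference as $\D[t]\mathbbm{z}\otimes\ssig[0,t]$ with $\mathbbm{z}:=\widehat{\mathbbm{x}}^{\blambda}_0-\ssig[0]$ independent of $\ssig[0,t]$, estimate the second moment, invoke Lemma~\ref{lemma:L1_bound}, and deduce uniqueness and ergodicity from the exponential contraction. Where you diverge is precisely the step you flag as the main obstacle, and there you are more careful than the paper: the paper's proof asserts the identity $\|\bell\otimes\bp\|_2=\|\bell\|_2\|\bp\|_2$, which holds for pure-level tensors but fails for tensor sequences supported on several levels (take $d=1$ and $\bell=\bp=\emptyword+\word{1}$; then $\bell\otimes\bp=\emptyword+2\,\word{1}+\word{11}$, so $\|\bell\otimes\bp\|_2^2=6>4=\|\bell\|_2^2\,\|\bp\|_2^2$), so the norm is not even submultiplicative, exactly as you observe. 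Your Cauchy--Schwarz over the at-most-$n$ prefix/suffix splittings of each word is the right repair, but the absorption mechanism named in your closing paragraph does not quite close: trading the factor $|\word{u}|$ against $e^{-2\lambda^{\word{u}}t}\le e^{-2|\word{u}|\lambda^{\min}t}$ only yields $e^{-2\lambda^{\min}t}\sup_{m\ge1} m\,e^{-2(m-1)\lambda^{\min}t}$, and the supremum behaves like $(2e\lambda^{\min}t)^{-1}$ as $t\to0^+$, destroying the $t$-uniform constant $M_{\blambda}$. The clean fix is to weight the Cauchy--Schwarz (e.g.\ with weights $2^{-(n-k)}$ over the splitting index $k$) so that the extra geometric factor lands on the $\ssig[0,t]$ side, where the factorial decay $\sup_{t\ge0}\E\bigl[|\langle\word{w},\ssig[0,t]\rangle|^2\bigr]\le(2C_{\blambda})^{|\word{w}|}/|\word{w}|!$ established inside the proof of Lemma~\ref{lemma:L1_bound} absorbs it; this gives $\E\bigl[\|\D[t]\mathbbm{z}\otimes\ssig[0,t]\|_2^2\bigr]\lesssim_{\blambda}e^{-2\lambda^{\min}t}\,\E[\|\mathbbm{z}\|_2^2]$ with a $t$-independent constant, and the theorem follows with a possibly larger $M_{\blambda}$. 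In short, your approach is the paper's approach; you correctly identified a genuine flaw in the paper's key norm identity, and your patch works once the polynomial factor is routed to the signature moments rather than to the dilation weights.
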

\begin{proof}
    First, note that $\ssigX[t][\widehat{X}] = \widehat{\mathbbm{x}}^{\blambda}_t$ for $t \leq 0$.
    By Chen's identity \eqref{eq:stat_chen}, we have, for $t \geq 0$,
\begin{equation}
    \ssigX[t][\widehat{X}] = \D[t]\ssigX[0][\widehat{X}]\otimes\ssig[0, t], \quad \ssig[t] = \D[t]\ssig[0]\otimes\ssig[0, t],
\end{equation}
and 
\begin{equation}
    \ssigX[t][\widehat{X}] -  \ssig[t] = \D[t]\left(\ssigX[0][\widehat{X}] - \ssig[0]\right)\otimes\ssig[0, t].
\end{equation}
{By the submultiplicative property \eqref{eq:banach_alg_ppty} of the norm $\|\cdot \|_1$, 
\begin{equation}\label{eq:exp_conv_wasser}
    \|\ssigX[t][\widehat{X}] -  \ssig[t]\|_1 \leq \|\D[t](\ssigX[0][\widehat{X}] - \ssig[0])\|_1 \|\ssig[0, t]\|_1.
\end{equation}
}
Conditioning with respect to $x$ and then taking the expectation, we obtain
{
\begin{equation}\label{eq:ergodicity_ineq}
\begin{aligned}
    \E\left[\|\ssigX[t][\widehat{X}] -  \ssig[t]\|_1\right] &= \E\left[\|\D[t](\ssigX[0][\widehat{X}] - \ssig[0])\|_1\right]\E\left[\|\ssig[0, t]\|_1\right] \\
    &\leq e^{-\blambda^{\mathrm{min}} t}\,\E\left[\|\ssigX[0][\widehat{X}] - \ssig[0]\|_1 \right]\E\left[\|\ssig[0, t]\|_1\right] \\
    &\leq M_{\blambda}\left(\E\left[\|\ssigX[0][\widehat{X}]\|_1\right] + M_{\blambda}\right)e^{-\blambda^{\mathrm{min}} t},
\end{aligned}
\end{equation}
}
where we applied Lemma~\ref{lemma:L1_bound} and the trivial observation that {$\|\D\bell\|_1 \leq e^{-\blambda^{\mathrm{min}} h}\|\bell\|_1$} for all $\bell \in \eTA[d+1]$ such that $\bell^{\emptyword} = 0$.

By definition \eqref{eq:Wasser_def}, the same bound holds for {$\mathcal{W}_1(\mu_t, \mu_*)$}. In particular, \eqref{eq:ergodicity_ineq} implies the uniqueness of the invariant measure $\mu_*$. Hence, this invariant measure is ergodic \cite[Corollary 5.12]{hairer2006ergodic}. 
\end{proof}

\begin{corollary}
    The continuous-time version of the Birkhoff--Khinchin ergodic theorem \cite[Corollary 10.9]{kallenberg1997foundations} then implies that, for any measurable integrable function $f\colon \eTA[d+1] \to \R$,
\begin{equation}\label{eq:time-average}
    \lim_{T \to \infty} \frac{1}{T} \int_0^T f(\ssig)\,dt = \E\left[f(\ssig[0])\right].
\end{equation}

The convergence in \eqref{eq:time-average} is important for applications, as it allows for the estimation of relevant statistics (e.g., signature moments) and enables learning the law of the stationary process from a single trajectory observed over a sufficiently long time horizon.
\end{corollary}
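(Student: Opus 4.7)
The plan is to reduce the claim to a direct application of the continuous-time Birkhoff--Khinchin ergodic theorem, for which the two ingredients that must be in place are: (i) the stationarity of the process $(\ssig[t])_{t\in\R}$ under the measure $\mu_*$, and (ii) the ergodicity of $\mu_*$ with respect to the shift semigroup on path space. Stationarity is exactly Theorem~\ref{thm:stationarity}, while ergodicity is the conclusion obtained at the end of the proof of Theorem~\ref{thm:ergodicity}, where uniqueness of the invariant measure was combined with \cite[Corollary~5.12]{hairer2006ergodic} to upgrade to ergodicity.

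First, I would note that the Markov property (Theorem~\ref{thm:Markov}) together with the existence of a \emph{unique} invariant law $\mu_*$ (Theorem~\ref{thm:ergodicity}) implies that $\mu_*$ is ergodic for the Markov semigroup associated with $\ssig[]$; equivalently, the shift-invariant $\sigma$-algebra on $C(\R, L^2\eTA[d+1])$ is trivial under the stationary law of $(\ssig[t])_{t\in\R}$. Next, because $f \colon L^2\eTA[d+1] \to \R$ is assumed measurable with $\E[|f(\ssig[0])|] < \infty$, the stationary ergodic process $(f(\ssig[t]))_{t\in\R}$ is a real-valued integrable stationary ergodic process, continuous in $t$ by the continuity of the sample paths of $\ssig[]$ (which follows from its defining integral representation). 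The continuous-time Birkhoff--Khinchin theorem \cite[Corollary~10.9]{kallenberg1997foundations} then yields
\begin{equation}
\lim_{T\to\infty} \frac{1}{T}\int_0^T f(\ssig[t])\,dt = \E[f(\ssig[0])] \quad \text{a.s.}
\end{equation}

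The only mildly non-routine point is justifying that the time averages converge for the particular initialisation used here (the stationary start), which is exactly what Birkhoff--Khinchin provides once ergodicity of $\mu_*$ is known; no additional work is required beyond invoking the theorem. I do not foresee a substantive obstacle, since ergodicity has already been established in Theorem~\ref{thm:ergodicity} and the rest is a standard citation.
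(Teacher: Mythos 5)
Your proposal is correct and follows essentially the same route as the paper, which presents this corollary as an immediate consequence of the unique ergodic invariant measure obtained in Theorem~\ref{thm:ergodicity} combined with the continuous-time Birkhoff--Khinchin theorem. One small inaccuracy: for a merely measurable $f$ the map $t \mapsto f(\ssig)$ need not be continuous, but only joint measurability (which follows from the path continuity of $\ssig[]$ and the measurability of $f$) is needed to apply the ergodic theorem, so this does not affect the argument.
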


\subsection{Expected EFM-signature of the Brownian motion.}
The expected signature of Brownian motion is given by Fawcett's formula (see \cite{fawcett}). 

In this section, we aim to compute the expected EFM-signatures:
\begin{enumerate}
    \item $\Essig[] := \ssigE[t]$, corresponding to the stationary distribution;
    \item $\Essig[t - s] := \ssigE[s, t]$, the expected EFM-signature on $[s, t]$.
\end{enumerate}
Since $\mathcal{L}(\ssig[s, t])$ converges weakly to the stationary law $\mathcal{L}(\ssig[t])$ as $s \to -\infty$, and since {$\sup\limits_{t \geq s} \E[\|\ssig[s, t]\|_1] \leq M_{\blambda}$} by Lemma~\ref{lemma:L1_bound}, the expected EFM-signatures converge {weakly in $L^1(\eTA[d+1])$}:
\begin{equation}
    {\Essig[h] \underset{h \to \infty}{\rightharpoonup} \Essig[].}
\end{equation}

The importance of the Magnus expansion for the computation of expected signatures in a general semimartingale context was highlighted by \citet*{friz2022unified, friz2024expected}. We will show that for the time-augmented Brownian motion path, the expected EFM-signature computation problem reduces to the computation of an EFM-signature of a linear path, a problem we have already treated in Subsection~\ref{sect:linear_path} using the Magnus exponential. Indeed, let us denote $\word{\tilde 0} := \word{0}$ and $\word{\tilde i} := \word{ii}$ for $\word{i} \in \{\word{1}, \ldots, \word{d}\}$ and the alphabet $\tilde A_{d + 1} = \{\word{\tilde 0}, \word{\tilde 1}, \ldots, \word{\tilde{d}}\}$. Note that the words formed by the letters of $\tilde A_{d + 1}$ form a subset of words in the initial alphabet $ A_{d + 1} = \{\word{ 0}, \word{ 1}, \ldots, \word{{d}}\}$ that correspond to the non-zero elements of the expected EFM-signature. 

Let $\mathbb{T}^{\tilde\blambda}$ denote the EFM-signature of the linear path $T_t = t\cdot (1, \frac{1}{2}, \ldots, \frac{1}{2})$ corresponding to the new alphabet $\tilde A_{d + 1}$ and $\tilde\blambda = (\blambda^{\word{\tilde{0}}}, \blambda^{\word{\tilde{1}}}, \ldots, \blambda^{\word{\widetilde{d}}})$. We recall that this signature is given in closed form by Theorem~\ref{thm:linear_path_magnus}.

\begin{theorem}\label{thm:expected_sig}
    Let $\Essig[]$ and $\Essig[t-s]$ be defined as above. Then,
    \begin{equation}\label{eq:expected_fm_sig}
    \begin{aligned}
        \Essig[t-s] = \exp^\otimes\left(\sum_{k \geq 1}\bm{\Omega}_k^{\Essig[]}(s, t)\right),\quad
        \Essig[] = \exp^\otimes\left(\sum_{k \geq 1}\bm{\Omega}_k^{\Essig[]}(-\infty, t)\right) = \sum_{k=0}^\infty {\bf H}_{\left(\word{0} +  \frac12\sum\limits_{i=1}^{d}\word{ii}\right)}^k\emptyword,
    \end{aligned}
    \end{equation}   
    where ${\bm{\Omega}}_k^{\Essig[]}(s, t)$ is given by \eqref{eq:magnus_omega} with $\bm A(u) = \D[t - u]\left(\word{0} +  \dfrac12\sum\limits_{i=1}^{d}\word{ii}\right)$, and the operator ${\bf H}$ is defined in Theorem~\ref{thm:linear_path_magnus}. The non-zero coefficients are given explicitly by
    \begin{equation}
        (\Essig[t-s])^{\word{\tilde{i}_1 \ldots \tilde{i}_n}} = \mathbb{T}_{0, t-s}^{\tilde\blambda, \word{\tilde{i}_1 \ldots \tilde{i}_n}}, \quad (\Essig[])^{\word{\tilde{i}_1 \ldots \tilde{i}_n}} = \mathbb{T}_{0}^{\tilde\blambda, \word{\tilde{i}_1 \ldots \tilde{i}_n}}.
    \end{equation}
    Moreover, if $\blambda^0 = 2\blambda^1 = \ldots = 2\blambda^d =: \lambda$, then 
    \begin{equation}
        \Essig[t-s] = \exp^\otimes\left(\left(\frac{1 - e^{-\lambda(t - s)}}{\lambda}\right)  \left(\word{0} +  \frac12\sum\limits_{i=1}^{d}\word{ii}\right)\right), \quad \Essig[] = \exp^\otimes\left({\frac{1}{\lambda}\left(\word{0} +  \frac12\sum\limits_{i=1}^{d}\word{ii}\right)}\right).
    \end{equation}
\end{theorem}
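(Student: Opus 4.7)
The plan is to reduce the statement to Theorem~\ref{thm:linear_path_magnus} by showing that the expected EFM-signature of $\widehat{W}$ solves the same linear system as the EFM-signature of the deterministic path $T_u = u\cdot(1,\tfrac12,\ldots,\tfrac12)$ read in the reduced alphabet $\tilde A_{d+1}$. Using the iterated form~\eqref{eq:def_stat_sig_coef}, I would write a word of length $n\geq 1$ as $\word{ui}$, with last letter $\word{i}$ and prefix $\word{u}$. If $\word{i}=\word{0}$ the component is a plain Lebesgue integral against $\ssig[s,u][W][\blambda,\word{u}]\,du$; if $\word{i}\in\{\word{1},\ldots,\word{d}\}$ it is a Stratonovich integral against $\circ\, dW^{\word i}_u$. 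Converting Stratonovich to It\^o produces the covariation $\tfrac12\,d\langle\ssig[\cdot][W][\blambda,\word{u}], W^{\word i}\rangle_u$, which, again by the iterated definition, is non-zero only when the last letter of $\word{u}$ also equals $\word{i}$; writing $\word{u}=\word{wj}$ this extra drift is $\tfrac12\,\mathbbm{1}_{\word{j}=\word{i}}\,\ssig[u][W][\blambda,\word{w}]\,du$.

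Taking expectations kills the remaining It\^o martingale (square-integrability is provided by Lemma~\ref{lemma:L1_bound}), so $m^{\word{v}}_t := \E[\ssig[s,t][W][\blambda,\word{v}]]$ satisfies the closed linear ODE system
\begin{equation*}
\dot m^{\word{ui}}_t = -\lambda^{\word{ui}}\, m^{\word{ui}}_t +
\begin{cases} m^{\word{u}}_t, & \word{i}=\word{0},\\[2pt] \tfrac12\, m^{\word{w}}_t, & \word{u}=\word{wi},\ \word{i}\in\{\word{1},\ldots,\word{d}\},\\[2pt] 0, & \text{otherwise,}\end{cases}
\end{equation*}
with $m^{\word{v}}_s=0$ for $|\word{v}|\geq 1$ and $m^{\emptyword}\equiv 1$. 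An induction on word length then shows that $m^{\word{v}}\equiv 0$ unless $\word{v}$ admits a unique parsing into blocks $\word{0}$ and $\word{ii}$ ($i\geq 1$); such admissible words are in bijection with $\tilde A_{d+1}^{\ast}$, and the identification preserves the mean-reversion parameter, since a block $\word{0}$ contributes $\lambda^{\word{0}}=\tilde\lambda^{\word{\tilde 0}}$ and each block $\word{ii}$ contributes $2\lambda^i=\tilde\lambda^{\word{\tilde i}}$.

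For admissible words the source coefficient is $1$ when the last block is $\word{0}$ and $\tfrac12$ when it is $\word{ii}$, matching exactly the components of $\dot T$. The restricted system is therefore precisely the EFM-signature ODE of $T$ in the alphabet $\tilde A_{d+1}$ with parameter $\tilde\blambda$, and Theorem~\ref{thm:linear_path_magnus} yields $m^{\word{v}}=\mathbb{T}^{\tilde\blambda,\word{\tilde v}}_{0,t-s}$ together with the Magnus-exponential representation in~\eqref{eq:expected_fm_sig}. The stationary formula then follows by sending $s\to-\infty$, using Lemma~\ref{lemma:L1_bound} and dominated convergence, while the alternative Picard form is inherited directly from Theorem~\ref{thm:linear_path_magnus}. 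When $\lambda^{\word{0}}=2\lambda^1=\cdots=2\lambda^d$, all $\tilde\lambda^{\word{\tilde i}}$ coincide and the commutative case of that theorem collapses the Magnus series to the explicit exponential stated. The main delicate point is the It\^o-correction bookkeeping combined with verifying that the alphabet reduction preserves both the mean-reversion rates and the source coefficients; once that identification is set up, the conclusion is a direct consequence of Theorem~\ref{thm:linear_path_magnus}.
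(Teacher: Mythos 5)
Your proposal is correct and follows essentially the same route as the paper: convert the Stratonovich dynamics to It\^o form, take expectations to kill the martingale part, recognize the resulting linear system as the EFM-signature equation of the deterministic path $T_u = u\cdot(1,\tfrac12,\ldots,\tfrac12)$ in the reduced alphabet, and invoke Theorem~\ref{thm:linear_path_magnus}. The only difference is presentational: you carry out the It\^o-correction and the vanishing-of-components argument word by word, whereas the paper does it in one line in tensor-algebra form via $\ssig[s,t]\otimes\bigl(\word{1}+\tfrac12\sum_{i}\word{ii}\bigr)\,dt$ and the variation-of-constants formula.
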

\begin{proof}
    Recall that the signature dynamics reads:
    \begin{equation}
        d\ssig[s, t] = -\G\ssig[s, t] dt + \ssig[s, t]\otimes\word{0}\,dt + \sum_{i=1}^{d}\ssig[s, t]\otimes\word{i}\circ dW_t^{i}, \quad t \geq s.
    \end{equation}
    Converting the Stratonovich integrals to Itô integrals, we deduce
    \begin{equation}
        d\ssig[s, t] = -\G\ssig[s, t] dt + \ssig[s, t]\otimes\left(\word{0} +  \dfrac12\sum_{i=1}^{d}\word{ii}\right)dt + \sum_{i=1}^{d}\ssig[s, t]\otimes\word{i}\, dW_t^{i}, \quad t \geq s.
    \end{equation}
    Using the variation of constant and taking the expectation, one obtains
    \begin{equation}\label{eq:expected_sig_equation}
        \ssigE[s, t] = \int_s^{t}\D[t-u]\left(\ssigE[s, u]\otimes\left(\word{0} +  \dfrac12\sum_{i=1}^{d}\word{ii}\right)\right)du, \quad t \geq s.
    \end{equation}
    Since $\left(\word{0} +  \dfrac12\sum\limits_{i=1}^{d}\word{ii}\right) = \left(\word{\tilde{0}} +  \dfrac12\sum\limits_{i=1}^{d}\word{\tilde{i}}\right)$, the solution of \eqref{eq:expected_sig_equation} corresponds exactly to the stationary signature of a linear path $T_t = (t, \frac{t}{2}, \ldots, \frac{t}{2})$ with $\tilde\blambda = (\blambda^{\word{\tilde{0}}}, \blambda^{\word{\tilde{1}}}, \ldots, \blambda^{\word{\tilde{d}}})$ indexed by the words in the alphabet $\{\word{\tilde{1}}, \ldots, \word{\tilde{d}}\}$.
    We conclude by applying Theorem~\ref{thm:linear_path_magnus}.
\end{proof}

\begin{sqremark}
    One can equivalently use the recursive formulas
obtained by a straightforward computation: for all words $\word{v}$ and $\word{i}, \word{j} \in \{ \word{1}, \ldots, \word{d}\}$ such that $\word{i} \neq \word{j}$, we have 
    \begin{align}
        \Essig[][\lambda, \word{v0}] = \dfrac{1}{\blambda^{\word{v0}}}\Essig[][\lambda, \word{v}], \quad \Essig[][\lambda, \word{vii}] = \dfrac{1}{2}\dfrac{1}{\blambda^{\word{vii}}}\Essig[][\lambda, \word{v}], \quad \Essig[][\lambda, \word{v0i}] = 0,  \quad \Essig[][\lambda, \word{vij}] = 0,
    \end{align}
    and
    \begin{align}
        \Essig[t][\lambda, \word{v0}] = \int_0^te^{-\blambda^{\word{v0}}(t-s)}\Essig[s][\lambda, \word{v}]\,ds, \quad \Essig[t][\lambda, \word{vii}] = \frac12\int_0^te^{-\blambda^{\word{vii}}(t-s)}\Essig[s][\lambda, \word{v}]\,ds, \quad \Essig[t][\lambda, \word{v0i}] = 0, \quad \Essig[t][\lambda, \word{vij}] = 0.
    \end{align}
     The coefficients of the expected EFM-signature are no longer symmetric in the sense that
    \[
    \Essig[][\lambda, \word{ii0}] = \dfrac{1}{\blambda^{\word{ii0}}\blambda^{\word{ii}}} \not= \dfrac{1}{\blambda^{\word{0ii}}\blambda^{\word{0}}} = \Essig[][\lambda, \word{0ii}], \quad \word{i} \in \{1, \ldots, d\},
    \]
    as they are in Fawcett's formula until one has $\blambda^0 = 2\blambda^1 = \ldots = 2\blambda^d$. The choice $\blambda^0 = 2\blambda^i$ comes naturally from the fact that the quadratic variation of the Ornstein--Uhlenbeck processes
    $$
    \widehat{\mathbb{W}}_t^{\blambda, \word{i}} = \int_{-\infty}^t e^{-\lambda(t-u)}dW_u^i, \quad i = 1, \ldots, d,
    $$
    appearing at the first level of the EFM-signature, equals in this case
    $$
    \widehat{\mathbb{W}}_t^{\blambda, \word{0}} = \int_{-\infty}^t e^{-2\lambda(t-u)}du.
    $$
\end{sqremark}

\begin{sqexample}   
As an illustration of the expected EFM-signature, we consider a prediction problem. Suppose that the output signal $Y = (Y_t)_{t \in \R}$ is given as a linear combination of EFM-signature elements:
\begin{equation}
    Y_t = \bracketssig{\bell}, \quad t \in\R.
\end{equation}

As follows immediately from Chen's identity \eqref{eq:stat_chen}, the conditional expectation and variance are given by
\begin{equation}\label{eq:prediction}
    \E[Y_{t+h} \mid \F_t] = \left\langle \bell, \D\ssig \otimes \Essig[h] \right\rangle, \quad 
    \mathbb{V}\mathrm{ar}[Y_{t+h} \mid \F_t] = \left\langle \bell\shupow{2}, \D\ssig \otimes \Essig[h] \right\rangle - \left\langle \bell, \D\ssig \otimes \Essig[h] \right\rangle^2.
\end{equation}
Figure~\ref{fig:prediction} illustrates the use of this prediction model for a stationary signal
$$
Y_t = \dfrac{1}{1.01 + \sin(U_t)}, \quad dU_t = -\kappa U_t\,dt + \nu\,dW_t,
$$
with $\kappa = 15$ and $\nu = 1.5$. First, we learn the coefficients $\bell$ via standard linear regression against the EFM-signature $\ssig$, truncated at order $5$, over the interval $[0, 1]$. The signature parameters are chosen as $\blambda = (10, 10)$. We then apply the prediction formula~\eqref{eq:prediction} to estimate the conditional expectation of the signal given $\mathcal{F}_1$. The shaded region represents one standard deviation.

\begin{figure}[H]
    \begin{center}
    \includegraphics[width=0.6\linewidth]{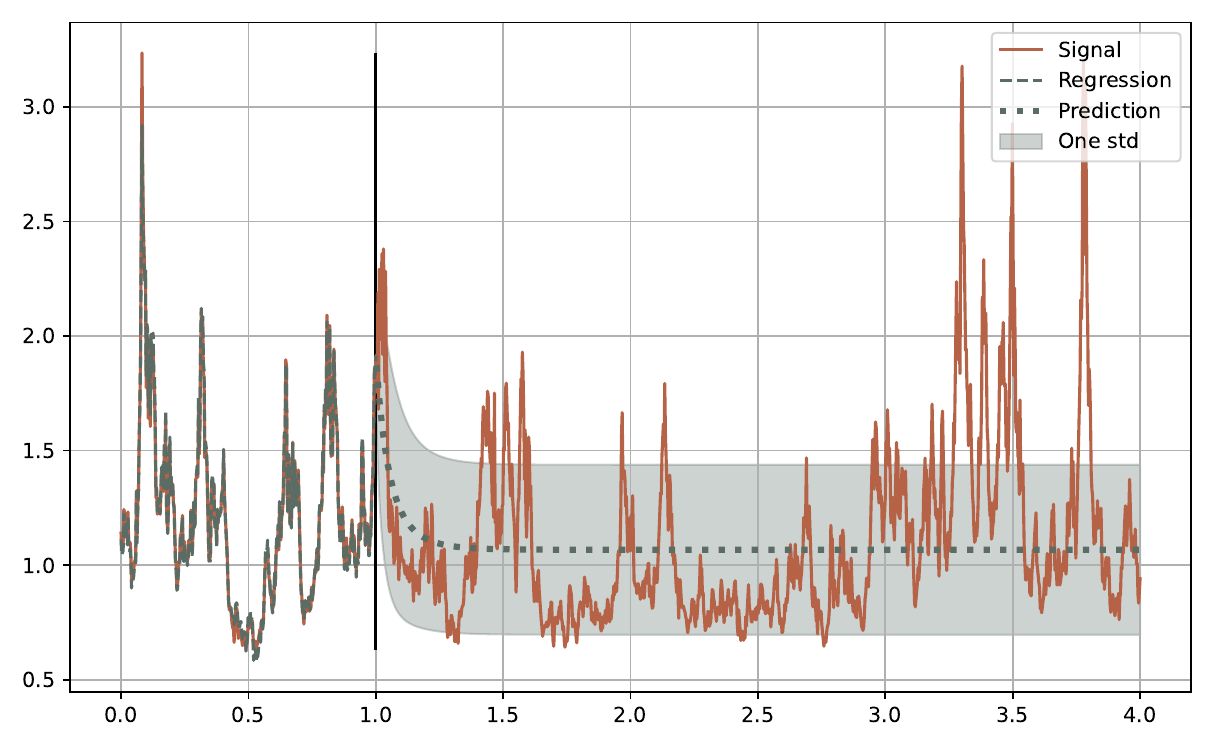}
    \caption{Trajectory of the signal $Y_t$ (brown), the regression process $\bracketssig{\bell}$ (dashed green), and the prediction process (dotted green) defined by~\eqref{eq:prediction}.}
    \label{fig:prediction}
    \end{center}
\end{figure}

\end{sqexample}

\section{Linear functionals of EFM-signatures}\label{sect:linear_forms}

As we saw in previous sections, the linear functional of the EFM-signature $\bracketssig[t]{\bell}$ of the time-augmented Brownian motion, with $\bell \in \TA$, is a natural candidate for time-invariant path-dependent modeling, since: 
\begin{itemize}
    \item $\ssig[t]$ is a stationary Markov process by Theorem~\ref{thm:Markov};
    \item $\ssig[t]$ uniquely determines the path $\widehat{W}$ up to time $t$ by Corollary~\ref{cor:uniqueness_time_aug};
    \item functionals of the form $\bracketssig[t]{\bell}$ are universal approximators of path functionals by Theorem~\ref{thm:UAT};
    \item the expected EFM-signature $\E[\ssig[t]]$ can be computed explicitly by Proposition~\ref{thm:expected_sig}.
\end{itemize}

This section is dedicated to studying the properties of linear functionals of the EFM-signature $\bracketssig{\bell}$. For simplicity and clarity of the proposed proofs, we establish Itô's decomposition only in the case where the coefficient $\bell \in \TA$ is finite. Analogous results for possibly infinite linear functionals of the signature of extended Brownian motion can be found in the works by \citet{abijaber2024signature} and \citet*{jaber2024pathdependentprocessessignatures}. We expect that the approach proposed in these works can be extended to the case of EFM-signatures.

\subsection{Itô's decomposition}
We start this subsection by considering a general continuous semimartingale path $X$ in $\R^d$. The following definition is useful for obtaining the semimartingale decomposition of linear functionals of the signature.

\begin{definition}
    For a tensor sequence $\bell \in \eTA$ and a word $\word{u}$, the \emph{projection} $\bell\proj{\word{u}}$ is defined as
    \begin{equation}
        \bell\proj{\word{u}} = \sum_{n \geq 0}\sum_{\word{v} \in V_n} \bell^{\word{v}\word{u}}\, \word{v}.
    \end{equation}
\end{definition}
In other words, the projection retains only the coefficients of $\bell$ corresponding to words that end with $\word{u}$, and replaces each such word $\word{v}\word{u}$ by its prefix $\word{v}$. The connection between this projection and differentiation becomes evident in the following decomposition:
\begin{equation}
    \bracketsigX{\bell} = \bracketsigX[0]{\bell} + \sum_{i=1}^d \int_0^t \bracketsigX[s]{\bell\proj{\word{i}}} \circ dX_s^i,
\end{equation}
or, expressed in terms of tensor sequences,
\begin{equation}
    \bell = \bell^\emptyword + \sum_{i=1}^d \bell\proj{\word{i}} \, \word{i}.
\end{equation}
This decomposition leads to the Itô formula for (possibly infinite) linear functionals of the signature of semimartingales, as shown by \cite*[Theorem 3.3]{jaber2024pathdependentprocessessignatures}. In the case of EFM-signatures, a similar result holds, with the only difference being the presence of exponential kernels in their definition, which introduces an additional mean-reverting term.

\begin{proposition}\label{prop:ito_general}
    Let $\bell_t \in \TA$ be continuously differentiable (element-wise). Then the linear functional $\bracketssigX{\bell_t}$ satisfies the stochastic differential equation:
    \begin{equation}
        d\bracketssigX{\bell_t} = \bracketssigX{\dot\bell_t - \G\bell_t}\,dt + \sum_{i=1}^d \bracketssigX{\bell_t\proj{\word{i}}} \circ dX_t^i.
    \end{equation}
\end{proposition}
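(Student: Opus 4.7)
The plan is to apply the product rule to $\bracketssigX{\bell_t} = \sum_{\word{v}} \ell_t^{\word{v}}\,\ssigX[t]^{\blambda,\word{v}}$ and substitute the EFM-signature dynamics~\eqref{eq:sig_sde}. Since $\bell_t\in\TA$, only finitely many coefficients are non-zero, so the bracket is a finite sum and no convergence issue arises. Each scalar process $\ell_t^{\word{v}}$ is continuously differentiable and hence of finite variation, so the product rule for Stratonovich semimartingales applies cleanly with no covariation term between $\bell$ and $\ssigX[t]^{\blambda}$.

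First I would write
\begin{equation}
 d\bracketssigX{\bell_t} \;=\; \bigl\langle \dot\bell_t,\,\ssigX[t]\bigr\rangle\,dt \;+\; \bigl\langle \bell_t,\,d\ssigX[t]\bigr\rangle,
\end{equation}
and substitute $d\ssigX[t] = -\G\,\ssigX[t]\,dt + \ssigX[t]\otimes\circ dX_t$ from \eqref{eq:sig_sde}. The proposition then reduces to two tensor-algebra identities. For the drift, $\G$ is diagonal by Definition~\ref{def:G_oper}, hence self-adjoint with respect to the bracket:
\begin{equation}
 \bigl\langle\bell,\,\G\bsigma\bigr\rangle \;=\; \sum_{\word{v}} \lambda^{\word{v}}\,\ell^{\word{v}}\,\sigma^{\word{v}} \;=\; \bigl\langle\G\bell,\,\bsigma\bigr\rangle.
\end{equation}

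For the Stratonovich term, I would unpack the tensor product. Interpreting $\circ dX_t$ as the level-$1$ tensor sequence $\sum_{i=1}^d \word{i}\circ dX_t^i$ (with vanishing empty-word component), the definition of $\otimes$ yields $(\ssigX[t]\otimes\circ dX_t)^{\word{u}\word{i}} = \ssigX[t]^{\word{u}}\circ dX_t^i$ and $(\ssigX[t]\otimes\circ dX_t)^{\emptyword}=0$, so
\begin{equation}
 \bigl\langle \bell_t,\,\ssigX[t]\otimes\circ dX_t\bigr\rangle \;=\; \sum_{i=1}^d \sum_{\word{u}} \ell_t^{\word{u}\word{i}}\,\ssigX[t]^{\word{u}}\circ dX_t^i \;=\; \sum_{i=1}^d \bracketssigX{\bell_t\proj{\word{i}}}\circ dX_t^i,
\end{equation}
where the last equality is precisely the definition of the projection $\bell\proj{\word{i}}$.

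Combining the three contributions yields the stated SDE. There is no substantial obstacle here: the restriction $\bell_t\in\TA$ (rather than $\eTA$) trivialises any convergence question and ensures that $\G\bell_t$ and $\bell_t\proj{\word{i}}$ remain in $\TA$, so the computation reduces to bookkeeping on the tensor algebra. The only point that requires care is correctly identifying the terminal letter in the tensor product so that the projection notation is applied accurately; this is also where the shift from $\bell_t$ to $\bell_t\proj{\word{i}}$ in the diffusion coefficient originates.
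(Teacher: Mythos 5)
Your proposal is correct and follows essentially the same route as the paper: apply the product rule (with no cross-variation since $\bell_t$ is $C^1$), substitute the EFM-signature dynamics \eqref{eq:sig_sde}, and rearrange; the paper simply does this word-by-word while you phrase it via the self-adjointness of $\G$ and the unpacking of $\ssigX[t]\otimes\circ dX_t$, which amounts to the same bookkeeping.
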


\begin{proof}
    Recall that by \eqref{eq:sig_sde}, for any non-empty word $\word{v} = \word{i_1 \ldots i_n}$, the corresponding EFM-signature component satisfies
    \begin{equation}
        d\sigX^{\lambda, \word{v}} = -\blambda^{\word{v}} \sigX^{\lambda, \word{v}}\,dt + \sigX^{\lambda, \word{v'}} \circ dX_t^{i_n},
    \end{equation}
    where $\word{v'} := \word{i_1 \ldots i_{n-1}}$. 
    The result follows by plugging this expression into the definition
    \[
        \bracketssigX{\bell_t} = \sum_{n \geq 0} \sum_{\word{v} \in V_n} \bell_t^{\word{v}}\, \sigX^{\lambda, \word{v}},
    \]
    and applying the product rule:
    \begin{align}
        d\bracketssigX{\bell_t} &= \sum_{n \geq 0}\sum_{\word{v} \in V_n}\dot\bell_t^{\word{v}}\sigX^{\blambda, \word{v}}dt +
        \sum_{n \geq 0}\sum_{\word{v} \in V_n}\bell_t^{\word{v}}d\sigX^{\blambda, \word{v}} \\
        &= \sum_{n \geq 0}\sum_{\word{v} \in V_n}\left(\dot\bell_t^{\word{v}}\sigX^{\blambda, \word{v}} - \lambda ^{\word{v}} \bell_t^{\word{v}}  \sigX^{\blambda, \word{v}} \right)dt +
        \sum_{n \geq 0}\sum_{\word{v} \in V_n}\bell_t^{\word{v}}\sigX[t]^{\blambda, \word{v'}}\circ dX_t^{i_n} \\
        &= \bracketssigX{\dot\bell_t - \G \bell_t}dt + \sum_{i=1}^d \bracketssigX{\bell_t\proj{\word{i}}}\circ dX_t^i.
    \end{align}
\end{proof}
This proposition allows us to derive the Itô formula for the EFM-signature $\ssig$ of the extended standard Brownian motion $\widehat{W}_t = (t, W_t^1, \ldots, W_t^d)$.

\begin{proposition}\label{prop:Ito_BM}
    Let $\bell_t \in \TA[d+1]$ be continuously differentiable (element-wise). Then the process $\bracketssig{\bell_t}$ satisfies:
    \begin{equation}\label{eq:ito_decomposition}
        d\bracketssig{\bell_t} = \bracketssig{\dot\bell_t - \G\bell_t + \bell_t\proj{0} + \dfrac{1}{2}\sum_{i = 1}^{d}\bell_t\proj{ii}}\,dt + \sum_{i=1}^{d}\bracketssig{\bell_t\proj{i}}\,dW_t^{i}.
    \end{equation}
\end{proposition}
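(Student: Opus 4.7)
The plan is to apply Proposition~\ref{prop:ito_general} with the semimartingale path $X = \widehat W = (t, W^1, \ldots, W^d)$, and then convert the Stratonovich integrals arising from the Brownian components into Itô integrals. Since $\bell_t \in \TA[d+1]$ is finite, all sums below are finite and no convergence issue arises.

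First, applying Proposition~\ref{prop:ito_general} directly yields
\begin{equation*}
    d\bracketssig{\bell_t} = \bracketssig{\dot\bell_t - \G\bell_t}\,dt + \bracketssig{\bell_t\proj{0}} \circ d t + \sum_{i=1}^d \bracketssig{\bell_t\proj{i}} \circ dW_t^i .
\end{equation*}
Since $t \mapsto t$ is of bounded variation, the Stratonovich integral against $dt$ reduces to an ordinary Lebesgue integral, producing the term $\bracketssig{\bell_t\proj{0}}\,dt$ in the drift.

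Next, I convert the Stratonovich integrals against each $W^i$ to Itô form via
\begin{equation*}
    \int_0^t \bracketssig[s]{\bell_s\proj{i}} \circ dW_s^i = \int_0^t \bracketssig[s]{\bell_s\proj{i}} \, dW_s^i + \tfrac{1}{2}\bigl\langle \bracketssig[\cdot]{\bell_\cdot\proj{i}},\, W^i \bigr\rangle_t .
\end{equation*}
To compute this bracket, I apply Proposition~\ref{prop:ito_general} to the semimartingale $\bracketssig[t]{\bell_t\proj{i}}$ itself: the martingale part equals $\sum_{j=1}^d \bracketssig[t]{(\bell_t\proj{i})\proj{j}}\,dW_t^j$. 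Using the elementary identity $(\bell\proj{i})\proj{j} = \bell\proj{ji}$ which follows directly from the definition (indeed, $((\bell\proj{i})\proj{j})^{\word{w}} = (\bell\proj{i})^{\word{wj}} = \bell^{\word{wji}}$), and the fact that $d\langle W^j, W^i\rangle_t = \delta_{ij}\,dt$, I obtain
\begin{equation*}
    d\bigl\langle \bracketssig[\cdot]{\bell_\cdot\proj{i}},\, W^i \bigr\rangle_t = \bracketssig{\bell_t\proj{ii}}\,dt.
\end{equation*}
Summing the Itô correction $\frac{1}{2}\bracketssig{\bell_t\proj{ii}}\,dt$ over $i = 1, \ldots, d$ and collecting all drift contributions yields exactly the claimed decomposition \eqref{eq:ito_decomposition}.

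The only non-routine point is the iterated-projection identity $(\bell\proj{i})\proj{j} = \bell\proj{ji}$ (note the reversed order), which is immediate from the definition but must be tracked carefully to avoid a sign- or index-swap error. Apart from this bookkeeping, the proof is a direct Stratonovich-to-Itô conversion built on Proposition~\ref{prop:ito_general}; no new estimate is needed since $\bell_t$ lies in the finite tensor algebra and the resulting linear SDE is supported on a fixed finite set of words.
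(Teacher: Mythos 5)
Your proposal is correct and follows essentially the same route as the paper: apply Proposition~\ref{prop:ito_general} to $\widehat W$, absorb the time-component term into the drift, and compute the Stratonovich-to-Itô correction by applying Proposition~\ref{prop:ito_general} once more to $\bracketssig{\bell_t\proj{i}}$ to identify the bracket as $\bracketssig{\bell_t\proj{ii}}\,dt$. Your explicit verification of the iterated-projection identity $(\bell\proj{i})\proj{j} = \bell\proj{ji}$ is a welcome piece of bookkeeping that the paper leaves implicit.
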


\begin{proof}
    Applying Proposition~\ref{prop:ito_general} to the EFM-signature $\ssig$ yields:
    \begin{equation}\label{eq:Strato_decomp}
        d\bracketssig{\bell_t} = \bracketssig{\dot\bell_t - \G\bell_t + \bell_t\proj{0}}\,dt + \sum_{i=1}^{d}\bracketssig{\bell_t\proj{i}} \circ dW_t^{i}.
    \end{equation}
    To convert the Stratonovich integral to an Itô integral, we compute the quadratic covariation between $W^{i-1}$ and $\bracketssig{\bell_t\proj{i}}$. Applying again Proposition~\ref{prop:ito_general} to $\bracketssig{\bell_t\proj{i}}$, we find:
    \[
        d\left\langle W^{i}, \bracketssig[\cdot]{\bell_\cdot\proj{i}} \right\rangle_t = \bracketssig{\bell_t\proj{ii}}\,dt.
    \]
    Therefore, rewriting \eqref{eq:Strato_decomp} in Itô form yields the desired result \eqref{eq:ito_decomposition}.
\end{proof}

\begin{sqexample}[OU representation revisited]\label{ex:OU_repr_revisited}
     The Itô decomposition can be used to find an approximation of an Ornstein--Uhlenbeck process $Y_t = \int_{-\infty}^t e^{-\mu(t-s)}\,dW_s$ satisfying
    $$
    dY_t = -\mu Y_t\, dt + dW_t,
    $$
    in the form $\bracketssig{\bell^\mu}$ for fixed $\blambda$ and for arbitrary $\mu > 0$. Indeed, applying Itô's formula and equating the drift and the volatility of $Y_t$ and $\bracketssig{\bell^\mu}$, we obtain
\begin{equation}
\begin{cases}
     -\G \bell^\mu + \bell^\mu \proj{0} = -\mu \bell^\mu , \\
    \bell^\mu \proj{1} = \emptyword.
\end{cases}
\end{equation}
It is clear that $\bell^\mu$ satisfies this condition if it is a solution to the algebraic equation
\begin{equation}\label{eq:OU_algebraic}
    \bell^\mu = (\G(\bell^\mu) - \mu \bell^\mu)\word{0} + \word{1} = \mathcal{B}\bell^\mu + \word{1},
\end{equation}
where the operator $\mathcal{B} : \eTA \to \eTA$ is defined by 
$
\mathcal{B}\colon \bell \mapsto (\G(\bell) - \mu \bell)\word{0}.
$
This is a linear equation which can be solved with the resolvent:
\begin{equation}
    \bell^\mu = \sum_{k \geq 0} \mathcal{B}^k \word{1},
\end{equation}
which corresponds to the Picard iterations for the equation \eqref{eq:OU_algebraic}. Note that this can be rewritten explicitly as
\begin{equation}\label{eq:ou_representation}
    \bell^\mu = \sum_{k \geq 0} c_k \word{1} \word{0} \conpow{k},
\end{equation}
with
\begin{equation}
    \begin{cases}
        c_0 = 1, \\
        c_{k + 1} = (k\blambda^0 + \blambda^1 - \mu) c_k, \quad k \geq 0.
    \end{cases}
\end{equation}
This is the formula we used in the introductory Subsection~\ref{ex:OU_repr}. If we denote
\begin{equation}\label{eq:OU_approx}
    \bell^{\mu, N} = \sum_{k = 0}^N c_k \word{1} \word{0} \conpow{k}, \quad Y_t^N = \bracketssig{\bell^{\mu, N}},
\end{equation}
one can verify that 
$$
d(Y_t - Y_t^N) = -\mu (Y_t - Y_t^N) dt + \bracketssig{c_{N + 1} \word{10}^{\otimes N}},
$$
so that 
$$
\E[(Y_t - Y_t^N)^2] = \E\left[\left(\int_{-\infty}^t e^{-\mu(t-s)} \bracketssig[s]{c_{N + 1} \word{10}^{\otimes N}} ds\right)^2\right] \leq \dfrac{c_{N + 1}^2}{2\mu} \left\langle (\word{10}^{\otimes N}) \shupow{2}, \Essig[] \right\rangle.
$$
A cumbersome but straightforward computation shows that the right-hand side expression converges to $0$ as $N \to +\infty$, so that $Y_t^N \to Y_t$ in $L^2$. 
\end{sqexample}

\subsection{A remark on the characteristic function of linear functional of $\ssig$}\label{sec:cf_riccati}

In this section, we show how the approach introduced in \cite{abijaber2024signature} can be adapted to compute the characteristic functions of the EFM-signature:
\begin{equation}
    \phi(\bell) := \E\left[e^{i\bracketssig{\bell}}\right] = \E\left[e^{i\bracketssig[0]{\bell}}\right], \quad t \in \R,
\end{equation}
and
\begin{equation}
    \phi_T(\bell) := \E\left[e^{i\bracketssig[0, T]{\bell}}\right], \quad T > 0,
\end{equation}
for a given $\bell \in \TA[d+1]$.

The following theorem provides a representation of these characteristic functions in terms of the solution to an infinite-dimensional system of mean-reverting Riccati equations. In this characterization, we allow for infinite linear combinations of EFM-signature elements. To that end, and only in this subsection, we assume the necessary convergence of the infinite series involved.

\begin{theorem}\label{thm:char}
    Let $T > 0$. Suppose that $\bpsi_t \in \eTA[d+1],\, t \geq 0$, is a smooth solution to the mean-reverting Riccati equation
    \begin{equation}\label{eq:riccati_cf}
    \begin{cases}
        \dot\bpsi_t = -\G\bpsi_t + F(\bpsi_t), \quad t \geq 0,\\
        \bpsi_0 = i\bell,
    \end{cases}
    \end{equation}
    with $F(\bpsi) = \bpsi\proj{0} + \frac12 \sum\limits_{i = 1}^{d}\bpsi\proj{ii} + \frac{1}{2}\sum\limits_{i = 1}^{d}\left(\bpsi\proj{i}\right)\shupow{2}$, such that the processes $M = (M_t)_{t\leq 0}$ and $N^T = (N_t^T)_{t\in[0, T]},$ given by 
    \begin{equation}
        M_t := e^{\bracketssig{\bpsi_{-t}}}, \quad N_t^T = e^{\bracketssig[0, t]{\bpsi_{T-t}}}.
    \end{equation}
    are well-defined and the Itô's formula can be applied.
    Then $M$ and $N^T$ are local martingales. If, in addition, they are true martingales, then the following expression for the characteristic functions holds:
    \begin{align}\label{eq:cf_repr}
        M_t &=  \E\left[e^{i\bracketssig[0]{\bell}} \big| \F_t\right], \quad t \leq 0, \\
        N_t^T &= \E\left[e^{i\bracketssig[0, T]{\bell}} \big| \F_t\right], \quad t \in [0, T].
    \end{align}
    In particular, the characteristic functions are given by
    $$
    \phi(\bell) = M_{-\infty} = \lim_{T\to\infty}e^{\bpsi_{T}^\emptyword} \quad \text{and} \quad \phi_T(\bell) = N_0^T = e^{\bpsi_{T}^\emptyword}.
    $$
\end{theorem}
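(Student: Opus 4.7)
The plan is to apply Itô's formula to the time-dependent linear functionals $X_t := \bracketssig[0,t]{\bpsi_{T-t}}$ and $Y_t := \bracketssig{\bpsi_{-t}}$, using Proposition~\ref{prop:Ito_BM} (valid verbatim for $\ssig[0,t]$ thanks to the identical SDE~\eqref{eq:sig_sde}), and to verify that the Riccati equation~\eqref{eq:riccati_cf} is designed precisely so that the exponentials $N^T = e^{X}$ and $M = e^{Y}$ become exponential local martingales. The main obstacle is purely technical: since $\bpsi_t$ lives in $\eTA[d+1]$, the linear functionals are a priori infinite sums, so Itô's formula, the shuffle identity and the bracket computation must be applied term-by-term; this is exactly the regularity granted by the standing assumption that Itô's formula may be applied to $M$ and $N^T$.

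For $N^T$, the chain rule gives $\frac{d}{dt}\bpsi_{T-t} = -\dot{\bpsi}_{T-t}$, hence Proposition~\ref{prop:Ito_BM} yields
\begin{align}
    dX_t = \bracketssig[0,t]{-\dot{\bpsi}_{T-t} - \G\bpsi_{T-t} + \bpsi_{T-t}\proj{0} + \tfrac12\sum_{i=1}^d\bpsi_{T-t}\proj{ii}}\,dt + \sum_{i=1}^d \bracketssig[0,t]{\bpsi_{T-t}\proj{i}}\,dW_t^i.
\end{align}
Substituting $\dot{\bpsi}_{T-t} = -\G\bpsi_{T-t} + F(\bpsi_{T-t})$ from~\eqref{eq:riccati_cf} collapses the drift to $-\tfrac12\bracketssig[0,t]{\sum_i(\bpsi_{T-t}\proj{i})\shupow{2}}\,dt$, which the shuffle property (Proposition~\ref{prop:shuffle}) rewrites as $-\tfrac12\sum_i(\bracketssig[0,t]{\bpsi_{T-t}\proj{i}})^2\,dt$. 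Applying Itô's formula to $N_t^T = e^{X_t}$ then gives
\begin{align}
    dN_t^T = N_t^T \sum_{i=1}^d \bracketssig[0,t]{\bpsi_{T-t}\proj{i}}\,dW_t^i,
\end{align}
since the finite-variation part of $dX_t$ cancels exactly against $\tfrac12 d\langle X\rangle_t$. The identical computation for $M_t = e^{Y_t}$, using $\frac{d}{dt}\bpsi_{-t} = -\dot{\bpsi}_{-t}$ together with the $(-\infty,t]$ version of Proposition~\ref{prop:Ito_BM}, shows that $M$ is a local martingale on $(-\infty, 0]$.

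Under the true-martingale hypothesis, the conditional-expectation representations follow from the terminal conditions imposed by $\bpsi_0 = i\bell$: we have $X_T = i\bracketssig[0,T]{\bell}$, so $N_t^T = \E[e^{i\bracketssig[0,T]{\bell}}\mid\F_t]$, and $Y_0 = i\bracketssig[0]{\bell}$, so $M_t = \E[e^{i\bracketssig[0]{\bell}}\mid\F_t]$. Specializing $N^T$ at $t=0$ and using $\ssig[0,0] = \emptyword$ yields the closed-form $\phi_T(\bell) = N_0^T = e^{\bpsi_T^\emptyword}$. For $\phi(\bell)$, the backward martingale convergence theorem together with the triviality of $\F_{-\infty}$ for the Brownian filtration gives $\lim_{t\to-\infty} M_t = \E[M_0] = \phi(\bell)$ deterministically; under the standing convergence hypothesis on the tensor series, the non-empty-word components of $\bpsi_T$ decay as $T \to \infty$ (otherwise the pairing $\bracketssig{\bpsi_{-t}}$ could not converge to a constant against the stationary $\ssig$), so only the empty-word coefficient survives in the limit and $\lim_{t\to-\infty}\bracketssig{\bpsi_{-t}} = \lim_{T\to\infty}\bpsi_T^\emptyword$, completing the proof.
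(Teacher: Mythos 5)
Your argument follows the paper's proof almost verbatim for the main claims: apply Proposition~\ref{prop:Ito_BM} to $\bracketssig[0,t]{\bpsi_{T-t}}$ and $\bracketssig{\bpsi_{-t}}$, use the Riccati equation to collapse the drift to $-\tfrac12\sum_i\bracketssig[0,t]{(\bpsi\proj{i})\shupow{2}}\,dt$, linearize it with the shuffle identity so that it cancels against $\tfrac12 d\langle X\rangle_t$ in the exponential, and read off the conditional-expectation representations from the terminal values $N_T^T=e^{i\bracketssig[0,T]{\bell}}$ and $M_0=e^{i\bracketssig[0]{\bell}}$ together with $\phi_T(\bell)=N_0^T=e^{\bpsi_T^{\emptyword}}$. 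All of that is correct and matches the paper.

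The one genuine gap is in the last identity, $\phi(\bell)=\lim_{T\to\infty}e^{\bpsi_T^{\emptyword}}$. You argue that since $M_t=e^{\bracketssig{\bpsi_{-t}}}$ converges to a constant, the non-empty-word coefficients of $\bpsi_T$ must vanish as $T\to\infty$, and then that only $\bpsi_T^{\emptyword}$ survives in the pairing. Neither inference is justified: convergence of the random series $\sum_{\word{v}}\bpsi_{-t}^{\word{v}}\,\ssig^{\blambda,\word{v}}$ to a constant does not by itself force coefficient-wise decay (the components of $\ssig$ are not obviously ``linearly independent'' in the required sense, and the sum is infinite), and even granting the decay you would still need to interchange the limit in $t$ with the infinite sum over words. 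The paper sidesteps all of this: it uses the already-established exponential ergodicity (Theorem~\ref{thm:ergodicity}), which gives weak convergence of $\mathcal{L}(\ssig[0,T])$ to $\mathcal{L}(\ssig[0])$, hence $e^{\bpsi_T^{\emptyword}}=\phi_T(\bell)=\E\bigl[e^{i\bracketssig[0,T]{\bell}}\bigr]\to\E\bigl[e^{i\bracketssig[0]{\bell}}\bigr]=\phi(\bell)$. Replacing your heuristic with this weak-convergence argument closes the gap; the rest of your proof stands as written (your explicit appeal to backward martingale convergence and triviality of $\F_{-\infty}$ for $M_{-\infty}=\phi(\bell)$ is in fact slightly more careful than the paper's).
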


\begin{proof}
    Applying the Itô's formula to $\bracketssig{\bpsi_{-t}}$, we obtain
    \begin{equation}
        d\bracketssig{\bpsi_{-t}} = \bracketssig{-\dot\bpsi_{-t} - \G\bpsi_{-t} + \bpsi_{-t}\proj{0} + \dfrac12\sum_{i=1}^{d}\bpsi_{-t}\proj{ii}}dt + \sum_{i=1}^{d}\bracketssig{\bpsi_{-t}\proj{i}}dW_t^{i}.
    \end{equation}
    The application of Itô's formula to $M$ for $t \leq 0$ yields 
    \begin{align}
        dM_t &= M_t d\bracketssig{\bpsi_{-t}} + \dfrac{1}{2}M_t\sum_{i=1}^{d}\bracketssig{\bpsi_{-t}\proj{i}}^2dt \\
        &= M_t \bracketssig{-\dot\bpsi_{-t} - \G\bpsi_{-t} + \bpsi_{-t}\proj{0} + \dfrac12\sum_{i=1}^{d}\bpsi_{-t}\proj{ii} + \dfrac12\sum_{i=1}^{d}\left(\bpsi_{-t}\proj{i}\right)\shupow{2}}dt + M_t \sum_{i=1}^{d}\bracketssig{\bpsi_{-t}\proj{i}}dW_t^{i}.
    \end{align}
    Since $\bpsi_t$ satisfies \eqref{eq:riccati_cf}, the $dt$-term vanishes, so that $M$ is a local martingale. The same argument shows that $N^T$ is a local martingale as well.
    
    Now, note that $M_0 = e^{\bracketssig[0]{\bpsi_{0}}} = e^{i\bracketssig[0, T]{\bell}}$ and $N_T^T = e^{\bracketssig[0, T]{\bpsi_{0}}} = e^{i\bracketssig[0, T]{\bell}}$, so that \eqref{eq:cf_repr} holds for $t = 0$ and $t = T$, respectively. If $M$ and $N^T$ are true martingales, the equality \eqref{eq:cf_repr} will hold for all $t \leq 0$ and $t \in [0, T]$, correspondingly.
    
    By \eqref{eq:cf_repr}, $M$ is uniformly integrable, so that it is closable and we set $M_{-\infty} = \lim\limits_{t \to -\infty} M_t$. We note that
    \[
    \phi(\bell) = M_{-\infty}, \quad \phi_T(\bell) = N_0^T = e^{\bracketssig[0, 0]{\bpsi_{T}}} = e^{\bpsi_T^\emptyword}.
    \]
    Since the law of $\ssig[0, T]$ converges weakly to the law of $\ssig[0]$ as $T \to \infty$, so does the characteristic function:
    \begin{equation}
        e^{\bpsi_T^\emptyword} = \E\left[e^{i\bracketssig[0, T]{\bell}}\right] \underset{T \to\infty}{\longrightarrow} \E\left[e^{i\bracketssig[0]{\bell}}\right]. 
    \end{equation}
    This finishes the proof. 
\end{proof}

We note that the existence of the solution $\bpsi$ satisfying Itô's decomposition \eqref{eq:ito_decomposition} and the true martingality of $M$ and $N^T$ are the assumptions that are difficult to verify even in the one-dimensional case $d = 1$. Some partial results concerning $\eTA$-valued Riccati ODEs were obtained by \cite*{cuchiero2025signaturesdesaffinepolynomial} and \cite*{jaber2024fourierlaplace}.

{\begin{sqexample}
    The equation \eqref{eq:riccati_cf} can be solved numerically using Euler's scheme or the predictor-corrector semi-integrated scheme described in Section~\ref{sect:numerical_ode}. Note that the only difference from the Riccati equation corresponding to the standard signature is the linear term $-\G\bpsi_t$. Appearing thanks to the mean reversion of the EFM-signature, it stabilizes the equation and forces all the coefficients except for $\psi_t^{\emptyword}$ to vanish as $t \to \infty$. A numerical illustration in Figure~\ref{fig:cf_plot} shows that the characteristic function $\phi_T(\bell)$, obtained by numerically solving the mean-reverting Riccati equation using the predictor--corrector scheme, converges to the stationary value $\phi(\bell)$ computed by the Monte Carlo method.
\begin{figure}[H]
    \begin{center}
    \includegraphics[width=1\linewidth]{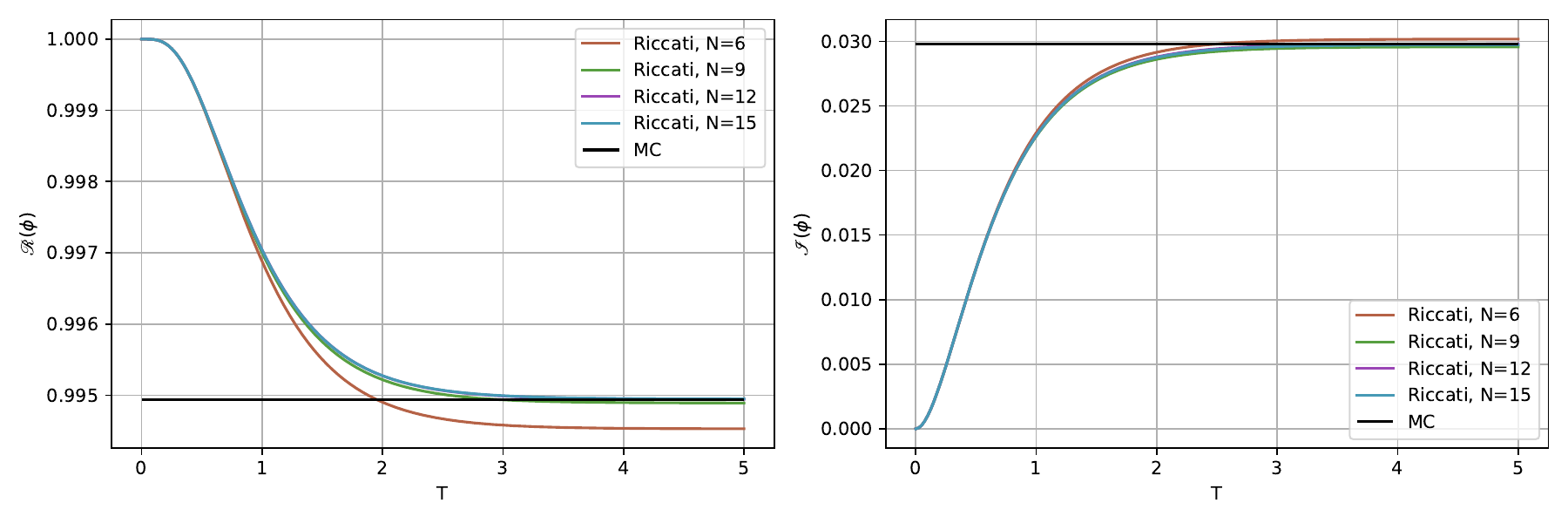}
    \caption{Real (on the left) and imaginary (on the right) parts of $\phi_T(\bell) = e^{\bpsi_T^\emptyword}$ for $\bell = \word{1}\conpow{4}$ and $\blambda = (1, 1)$ for different truncation orders $N$. The horizontal black line corresponds to the stationary value $\phi(\bell)$ estimated by the Monte Carlo method.}
    \label{fig:cf_plot}
    \end{center}
\end{figure}
\end{sqexample}
}

\section{Rough path view on the EFM-signature}\label{sect:rough_paths}

This section is devoted to the construction of the EFM-signature with rough path techniques. Namely, our goal is to make sense of the improper rough convolution on $(-\infty, T]$. Indeed, we would like to define the $(n+1)$-th level of the EFM-signature recursively as
\begin{equation}\label{eq:recursive_fm_sig_rough}
    \mathbb{X}^{\blambda, n + 1}_{s, t} = \int_s^t \D[t - u](\mathbb{X}^{\blambda, n}_{s, u} d\mathbf{X}_u^{}).
\end{equation}
The case of finite $s > -\infty$ is completely covered by the rough convolution relying on the mild sewing lemma as presented by \citet{Gerasimovis2018HrmandersTF} and \citet{gubinelli2010rough}. However, this framework does not cover the case of an infinite integration domain $s = -\infty$. For this reason, we extend the rough convolution result of \cite{Gerasimovis2018HrmandersTF} by exploiting the ideas of improper integration used by \citet*{Bellingeri02112022}.{ An alternative approach is to construct the EFM-signature via the standard rough integral 
$t \mapsto \mathbb{X}_{s,t}^{T}$ from Subsection~\ref{sect:markovian_lift}, 
evaluated at $t = T$, and subsequently establish regularity along the diagonal 
$t \mapsto \mathbb{X}_{s,t}^{t}$, together with well-posedness in the limit 
$s \to -\infty$.
}

Notice that the operator $\D[]$ satisfies the condition
$$
\|\D[t]u - u\| \lesssim \|u\|,
$$
needed for the applicability of the rough convolution results of \cite{Gerasimovis2018HrmandersTF},
since $\|\D\| \leq 1$ for all $h \geq 0$. As in the standard rough integration case, the rough convolution relies on the notion of controlled paths.

\begin{definition}
Let $\mathbf{X} = (\mathbb{X}^1, \mathbb{X}^2)\in \mathscr{C}^{\alpha}([s, t])$ for some $\alpha \in (\frac13, \frac12]$. We say that $(Y, Y') \in {\hat{\mathcal{C}}^{\alpha}}([s, t])\times{\hat{\mathcal{C}}^{\alpha}}([s, t])$ is controlled by $\mathbf{X}$ according to the semigroup $\D[]$, if the remainder term $R^Y$ defined by
$$
R^Y_{u, v} = \hat\delta Y_{u, v} - \D[v-u](Y'_uX_{u, v}),
$$
belongs to $\mathcal{C}^{2\alpha}([s, t])$. We write $(Y, Y')\in\mathscr{D}_{\D[], X}^{2\alpha}([s, t])$ and define the seminorm on $\mathscr{D}_{\D[], X}^{2\alpha}([s, t])$ by
$$
\|Y, Y'\|_{X, 2\alpha}^{{\wedge}} = \|Y'\|_\alpha^\wedge + \|R^Y\|_{2\alpha}. 
$$
\end{definition}

The convolution integral is then defined by
\begin{equation}\label{eq:rough_conv_def}
    Z_t = \int_s^t \D[t - u](Y_u d\mathbf{X}_u)= \lim_{|\mathcal{P}|\to 0}\sum_{[u, v]\in \mathcal{P}}\D[t - u](Y_uX_{u, v} + Y'_u\mathbb{X}^{2}_{u, v}),
\end{equation}
where the limit is taken over all partitions $\mathcal{P}$ of the interval $[s, t]$.
We use the result on the rough convolution \cite[Theorem 3.5]{Gerasimovis2018HrmandersTF}, slightly simplified for our purposes. 
\begin{theorem}\label{thm:rough_conv_finite_interval}
Fix $s \leq t$ and $\mathbf{X} = (\mathbb{X}^1, \mathbb{X}^2)\in \mathscr{C}^{\alpha}([s, t])$ for some $\alpha \in (\frac13, \frac12]$. Let $(Y, Y') \in \mathscr{D}^{2\alpha}_{\D[], X}([s, t])$. Then the integral $Z$ given by \eqref{eq:rough_conv_def} exists as an element of ${\hat{\mathcal{C}}^{\alpha}}([s, t])$, {and, for all $u, v \in [s, t], \ u \leq v$,}
    \begin{equation}\label{eq:conv_int_control}
        \left|\int_u^v \D[v - r](Y_r d\mathbf{X}_r) - \D[v - u](Y_uX_{u, v} + Y'_u\mathbb{X}^{2}_{u, v}) \right| \leq (\|R^Y\|_{2\alpha}\|\mathbb{X}^{1}\|_{\alpha} + \|Y'\|_{\alpha}^\wedge\|\mathbb{X}^2\|_{2\alpha})|v - u|^{3\alpha},
    \end{equation}
    Moreover, $(Z, Y) \in \mathscr{D}^{2\alpha}_{\D[], X}([s, t])$, and one has the bound
    \begin{equation}
        \|Z, Y\|_{X, 2\alpha}^{{\wedge}} \lesssim \|Y\|_\alpha^\wedge + \|\mathbf{X}\|_\alpha(\|Y_0'\| + \|Y, Y'\|_{X, 2\alpha}^{{\wedge}}).
    \end{equation}
\end{theorem}
\begin{proof}
    {The proof follows immediately from \cite[Theorem 3.5]{Gerasimovis2018HrmandersTF} by taking $\beta = 0$ and $\mathcal{H}_\gamma = \mathcal{H} := [s, t]$.}
\end{proof}

This result allows us to define recursively the EFM-signature over finite intervals.
\begin{corollary}\label{corollary:fm_sig_s_fin}
Fix $s \leq t$ and let $\mathbf{X} = (\mathbb{X}^1, \mathbb{X}^2)\in \mathscr{C}^{\alpha}([s, t])$ for some $\alpha \in (\frac13, \frac12]$. Then the EFM-signature
\begin{equation}
    {\mathbb{X}^{\blambda, n + 1}_{s, \cdot} = \int_s^{\cdot} \D[\cdot - u](\mathbb{X}^{\blambda, n}_{s, u} d\mathbf{X}_u^{}), \quad  n \geq 0,}
\end{equation}
is well-defined {as an element of $\hat{\mathcal{C}}^\alpha([s,t])$ via the} rough convolution of $(\mathbb{X}^{\blambda, n}_{s, \cdot}, \mathbb{X}^{\blambda, n - 1}_{s, \cdot}) \in \mathscr{D}_{\D[], X}^{2\alpha}([s, t])$ with respect to $\mathbf{X}$.
\end{corollary}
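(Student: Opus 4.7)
The plan is to proceed by induction on $n \geq 0$, establishing simultaneously that $\mathbb{X}^{\blambda, n}_{s, \cdot}$ is well-defined as a continuous path on $[s, t]$ valued in $(\mathbb{R}^d)^{\otimes n}$, and that the pair $(\mathbb{X}^{\blambda, n}_{s, \cdot}, \mathbb{X}^{\blambda, n-1}_{s, \cdot})$ lies in the space $\mathscr{D}^{2\alpha}_{\D[], X}([s, t])$ of $\D[]$-controlled rough paths above $\mathbf{X}$. For the base case $n = 0$, the path $\mathbb{X}^{\blambda, 0}_{s, \cdot} \equiv 1$ is constant and scalar-valued; since $\lambda^{\emptyword} = 0$, the semigroup $\D[\cdot]$ acts as the identity on the zeroth tensor level, so the remainder of the pair $(1, 0)$ vanishes identically and $(1, 0) \in \mathscr{D}^{2\alpha}_{\D[], X}([s, t])$. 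The rough convolution theorem then produces $\mathbb{X}^{\blambda, 1}_{s, \cdot} = \int_s^{\cdot} \D[\cdot - u]\, d\mathbf{X}_u$ in $\mathcal{C}^{\alpha}([s, t])$ and gives the controlled structure $(\mathbb{X}^{\blambda, 1}_{s, \cdot}, \mathbb{X}^{\blambda, 0}_{s, \cdot}) \in \mathscr{D}^{2\alpha}_{\D[], X}([s, t])$, which initializes the induction.

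For the inductive step, assume that $(\mathbb{X}^{\blambda, n}_{s, \cdot}, \mathbb{X}^{\blambda, n-1}_{s, \cdot}) \in \mathscr{D}^{2\alpha}_{\D[], X}([s, t])$. The integrand $\mathbb{X}^{\blambda, n}_{s, u}\, d\mathbf{X}_u$ is the tensor product of a controlled path taking values in $(\mathbb{R}^d)^{\otimes n}$ with the increment of the rough path $\mathbf{X}$, producing a $(\mathbb{R}^d)^{\otimes (n+1)}$-valued quantity. Applying the rough convolution theorem, componentwise in the tensor indices, directly yields that
$$
\mathbb{X}^{\blambda, n+1}_{s, t} = \int_s^t \D[t - u]\bigl(\mathbb{X}^{\blambda, n}_{s, u}\, d\mathbf{X}_u\bigr)
$$
is well-defined in $(\mathbb{R}^d)^{\otimes (n+1)}$, and the "moreover" part of the theorem provides precisely the controlled structure $(\mathbb{X}^{\blambda, n+1}_{s, \cdot}, \mathbb{X}^{\blambda, n}_{s, \cdot}) \in \mathscr{D}^{2\alpha}_{\D[], X}([s, t])$ needed to close the induction.

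The only point requiring care is the passage from the scalar formulation of the rough convolution framework of \citet{Gerasimovis2018HrmandersTF} to the tensor-valued setting used here, together with the verification that $\D[\cdot]$ on $(\mathbb{R}^d)^{\otimes n}$ does satisfy the semigroup hypotheses. By Definition~\ref{def:D_operator} and Proposition~\ref{prop_D_ppties}, the restriction of $\D[h]$ to each tensor level is diagonal in the canonical basis with eigenvalues $e^{-\lambda^{\word{v}} h} \in (0, 1]$, so $\|\D[h]\|_{\mathrm{op}} \leq 1$ uniformly in $h \geq 0$ and the contractivity bound $\|\D[h] u - u\| \lesssim \|u\|$ required by the rough convolution machinery holds trivially on each fixed level. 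Consequently, the arguments of \cite{Gerasimovis2018HrmandersTF} transfer verbatim to each $(\mathbb{R}^d)^{\otimes n}$-valued layer, completing the induction.
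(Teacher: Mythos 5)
Your proposal is correct and follows essentially the same route as the paper: the corollary is an immediate recursive application of the quoted rough convolution theorem, and the paper's proof of the analogous infinite-horizon statement (Corollary~\ref{corollary:fm_sig_s_inf}) uses exactly your induction, with the same base case $\mathbb{X}^{\blambda,0}_{s,\cdot}\equiv 1$, vanishing remainder, and the ``moreover'' part of the theorem supplying the controlled structure $(\mathbb{X}^{\blambda,n+1}_{s,\cdot},\mathbb{X}^{\blambda,n}_{s,\cdot})\in\mathscr{D}^{2\alpha}_{\D[],X}([s,t])$ that closes the induction. Your additional remarks on the tensor-level action of $\D[]$ and the bound $\|\D[h]\|\leq 1$ are consistent with what the paper itself notes before stating the theorem.
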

\begin{proof}
    {
    For $n = 0$, we have $\mathbb{X}^{\blambda, 0}_{s, t} \equiv 1$ and we define 
    $$
    \mathbb{X}^{\blambda, 1}_{s,t} = \int_{s}^t\D[t-u](\mathbb{X}^{\blambda, 0}_{s, u}d\mathbf{X}_u)
    $$
    as an improper convolution of $(\mathbb{X}^{\blambda, 0}_{s, \cdot}, 0)$ using Theorem~\ref{thm:rough_conv_finite_interval}. Indeed, $(\mathbb{X}^{\blambda, 0}_{s, \cdot}, 0) \in \mathscr{D}^{2\alpha}_{\D[], X}([s, t])$, since
    $$
    R^{\mathbb{X}^{\blambda, 0}_{s, \cdot}}_{u,t} = \mathbb{X}^{\blambda, 0}_{s, t} - \D[t-u]\mathbb{X}^{\blambda, 0}_{s,u} \equiv 0.
    $$
    Theorem~\ref{thm:rough_conv_finite_interval} implies that $\mathbb{X}^{\blambda, 1}_{s, \cdot} \in \hat{\mathcal{C}}^{\alpha}([s, t])$ and $(\mathbb{X}^{\blambda, 1}_{s, \cdot}, \mathbb{X}^{\blambda, 0}_{s, \cdot}) \in  \mathscr{D}^{2\alpha}_{\D[], X}([s, t])$.
    
    For $n \geq 1$, we proceed by induction, defining $\mathbb{X}^{\blambda, n+1}_{s, t}$ as an improper convolution with $Y_t = \mathbb{X}^{\blambda, n}_{s, t}$ and $Y_t' = \mathbb{X}^{\blambda, n-1}_{s, t}$ and applying the theorem. 
    }
\end{proof}

\subsection{Improper rough convolution}
The goal of this subsection is to define the rough convolution over $(-\infty, T]$ as an improper integral
$$
Z_t = \int_{-\infty}^t\D[t - u](Y_ud\mathbf{X}_u) := \lim_{R \to -\infty}\int_R^t \D[t - u]({Y_{u}} d\mathbf{X}_u^{}).
$$
The space $\hat{\mathcal{C}}^{\alpha, \brho}_b((-\infty, T])$ defined in \eqref{eq:hat_C_b_alpha_rho} appears to fit well with our purposes, so that we will consider $Y \in \hat{\mathcal{C}}^{\alpha, \brho}_b((-\infty, T])$.
This naturally leads to the following definition of the corresponding controlled path space and allows us to formulate the integration result.

\begin{definition}
Let $\mathbf{X} = (\mathbb{X}^1, \mathbb{X}^2)\in \mathscr{C}^{\alpha, \brho}((-\infty, T])$ for some $\alpha \in (\frac13, \frac12]$ and $\brho \succ 0$. We say that $(Y, Y') \in {\hat{\mathcal{C}}^{\alpha, \brho}_b}((-\infty, T])\times{\hat{\mathcal{C}}^{\alpha, \brho}_b}((-\infty, T])$ is controlled by $\mathbf{X}$ according to the semigroup $\D[]$, if the remainder term $R^Y$ defined by
$$
R^Y_{s, t} = \hat\delta Y_{s, t} - \D[t-s](Y'_sX_{s, t}),
$$
belongs to $\mathcal{C}^{2\alpha, \brho}((-\infty, T])$. We write $(Y, Y')\in\mathscr{D}_{\D[], X}^{2\alpha, \brho}((-\infty, T])$ and define the seminorm on $\mathscr{D}_{\D[], X}^{2\alpha, \brho}((-\infty, T])$ by
$$
\|Y, Y'\|_{2\alpha, \brho}^\wedge = \|Y'\|_{\alpha, \brho}^\wedge + \|R^Y\|_{2\alpha, \brho}. 
$$
\end{definition}

\begin{theorem}[Improper rough convolution]\label{thm:improper_rough}
    Let ${(Y, Y')} \in \mathscr{D}^{2\alpha, \brho}_{\D[], X}((-\infty, T])$ and $\mathbf{X} \in \mathscr{C}^{\alpha, \brho}((-\infty, T])$ for some $\alpha \in (\frac13, \frac12]$ and $0 \prec \brho \prec \blambda$. Then, for $t \leq T$, the improper rough convolution 
    \begin{equation}\label{eq:improper_conv}
        Z_t = \lim_{R\to -\infty}\int_R^t\D[t - u](Y_ud\mathbf{X}_u),
    \end{equation}
    is well-defined as an element of $\hat{\mathcal{C}}^{\alpha, \brho}_b((-\infty, T])$ and
    \begin{align}\label{eq:improper_convergence_control}
        \left|Z_t - \int_R^t\D[t - u](Y_ud\mathbf{X}_u)\right| \lesssim \|\D[T-R][\blambda - \brho]\mathbf{X}\|_{\alpha, \brho}(\|Y\|_{\infty, \brho} + \|Y'\|_{\infty, \brho} + \|Y, Y'\|_{2\alpha, \brho}^\wedge).
    \end{align}
    Moreover, the couple $(Z, Y)$ belongs to $\mathscr{D}^{2\alpha, \brho}_{\D[], X}((-\infty, T])$ and the following bounds hold
    \begin{equation}\label{eq:improper_bound_1}
        \|Z\|_{\alpha, \brho}^\wedge + \|Z\|_{\infty, \brho} \lesssim \|\mathbf{X}\|_{\alpha, \brho}(\|Y\|_{\infty, \brho} + \|Y'\|_{\infty, \brho} + \|Y, Y'\|_{2\alpha, \brho}^\wedge),
    \end{equation}
    \begin{equation}\label{eq:improper_bound_2}
        \|Z, Y\|_{2\alpha, \brho}^\wedge \lesssim \|\mathbf{X}\|_{\alpha, \brho}(\|Y'\|_{\infty, \brho} + \|Y, Y'\|_{2\alpha, \brho}^\wedge) + \|Y, Y'\|_{2\alpha, \brho}^\wedge.
    \end{equation}
\end{theorem}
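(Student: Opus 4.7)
The plan is to build $Z_t$ as the limit of the finite-interval rough convolutions $Z_t^R := \int_R^t \D[t-u](Y_u\,d\mathbf{X}_u)$ as $R \to -\infty$. Each $Z_t^R$ is well-defined by the finite-interval rough convolution result cited before Corollary~\ref{corollary:fm_sig_s_fin}, after restricting $\mathbf{X}$ and $(Y,Y')$ to $[R,T]$: the inclusion $\mathscr{C}^{\alpha,\brho}((-\infty,T]) \hookrightarrow \mathscr{C}^{\alpha}([R,T])$ and the analogous inclusion for controlled paths follow from Remark~\ref{rmk:holder_spaces} and Lemma~\ref{lem:holder_weighted}, since the weighted norm dominates the unweighted norm on any bounded subinterval.

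For the Cauchy property, I would first use additivity of the rough convolution together with the semigroup property $\D[t-u] = \D[t-R_2]\D[R_2-u]$ (Proposition~\ref{prop_D_ppties}\,(ii)), applied inside the Riemann-sum limit defining the integral, to obtain for $R_1 < R_2 \leq t$
\begin{equation}
  Z_t^{R_1} - Z_t^{R_2} \;=\; \D[t-R_2]\!\int_{R_1}^{R_2}\D[R_2-u](Y_u\,d\mathbf{X}_u).
\end{equation}
Then I partition $(-\infty, T]$ into consecutive intervals $I_k = (T-(k+1)h,\,T-kh]$ of length $h<1$, sum the contributions over those $I_k$ covering $(R_1,R_2]$, and on each piece apply the local estimate \eqref{eq:conv_int_control} together with the dyadic Germ approximation $\D[h](Y_{t_k}X_{t_k,t_{k+1}}+Y'_{t_k}\mathbb{X}^2_{t_k,t_{k+1}})$. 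The crucial algebraic step is to factor $\D[t-u] = \D[t-u][\brho]\D[t-u][\blambda-\brho]$ by Proposition~\ref{prop_D_ppties}\,(iii),(vii): the $\D[\cdot][\brho]$ factor absorbs into the weighted norms $\|\cdot\|_{\alpha,\brho}$, $\|\cdot\|_{\infty,\brho}$, $\|\cdot\|_{2\alpha,\brho}^{\wedge}$, while the $\D[\cdot][\blambda-\brho]$ factor provides a genuine geometric damping of the tail. Summing in $k$ yields the explicit estimate
\begin{equation}
  \bigl|Z_t^{R_1} - Z_t^{R_2}\bigr| \;\lesssim\; \|\D[T-R_2][\blambda-\brho]\mathbf{X}\|_{\alpha,\brho}\bigl(\|Y\|_{\infty,\brho}+\|Y'\|_{\infty,\brho}+\|Y,Y'\|_{2\alpha,\brho}^{\wedge}\bigr),
\end{equation}
which tends to $0$ as $R_2\to-\infty$ because $\brho\prec\blambda$. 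Hence $(Z_t^R)_R$ is Cauchy, defining $Z_t$, and sending $R_1\to-\infty$ with $R_2=R$ gives the quantitative error bound~\eqref{eq:improper_convergence_control}.

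The remaining claims follow by taking the limit in the finite-interval estimates. The bounds \eqref{eq:improper_bound_1}, \eqref{eq:improper_bound_2} come from passing to $R\to-\infty$ in the corresponding finite bounds for $\|Z^R\|_{\alpha}^{\wedge}$, $\|Z^R\|_{\infty;[R,T]}$ and $\|Z^R,Y\|_{X,2\alpha}$ delivered by the finite rough-convolution theorem, combined with Lemma~\ref{lem:holder_weighted} to assemble the weighted seminorms from the local ones. In particular, defining the limiting remainder
\begin{equation}
  R^Z_{s,t} := \hat\delta Z_{s,t} - \D[t-s]\bigl(Y_s X_{s,t}\bigr), \qquad s\leq t\leq T,
\end{equation}
the same dyadic/Cauchy argument shows $R^Z\in\mathcal{C}^{2\alpha,\brho}((-\infty,T])$, so that $(Z,Y)\in\mathscr{D}^{2\alpha,\brho}_{\D[],X}((-\infty,T])$, and $\|Z\|_{\infty,\brho}<\infty$ follows from taking $R\to-\infty$ in the uniform bound of Step~2.

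The main obstacle is Step~2: rigorously tracking how the weighted Hölder and supremum norms of $(Y,Y',\mathbf{X})$ combine with the semigroup $\D[]$ on each piece $I_k$ so that the sum telescopes into a geometric series with ratio controlled by $\blambda-\brho$. The factorization $\D[h][\blambda] = \D[h][\brho]\D[h][\blambda-\brho]$ together with Proposition~\ref{prop_D_ppties}\,(vii) is the key algebraic mechanism; the strict ordering $\brho\prec\blambda$ is exactly what makes the tails summable, and the boundedness component $\|Y\|_{\infty,\brho}+\|Y'\|_{\infty,\brho}$ (rather than merely the Hölder components) is needed to handle the leading-order Germ $\D[h][\blambda](Y_{t_k}X_{t_k,t_{k+1}}+Y'_{t_k}\mathbb{X}^2_{t_k,t_{k+1}})$ on each piece.
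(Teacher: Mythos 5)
Your proposal is correct and follows essentially the same route as the paper's proof: define $Z_t$ as the limit of finite-interval rough convolutions, establish the Cauchy property by cutting $(R_1,R_2]$ into pieces of length at most one, applying the local germ estimate \eqref{eq:conv_int_control} on each piece, and using the factorization $\D[h]=\D[h][\brho]\D[h][\blambda-\brho]$ (Proposition~\ref{prop_D_ppties}) so that the $\brho$-part is absorbed into the weighted norms of $Y,Y',\mathbf{X}$ while the $(\blambda-\brho)$-part sums to a geometric series; the bounds \eqref{eq:improper_bound_1}--\eqref{eq:improper_bound_2} then follow from the same local estimates assembled via Lemma~\ref{lem:holder_weighted}. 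You have correctly identified the two load-bearing ingredients — the strict ordering $\brho\prec\blambda$ for summability of the tails and the need for the weighted sup-norms $\|Y\|_{\infty,\brho},\|Y'\|_{\infty,\brho}$ to control the leading germ term — which is exactly how the paper's argument is organized.
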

\begin{proof} The proof consists of three steps.
    \paragraph{Step 1. Existence of the limit \eqref{eq:improper_conv}.} 
    Without loss of generality suppose that $t = T$. Let us denote
    $$
    Z^R = \int_{R}^T\D[T-u](Y_ud\mathbf{X}_u), \quad R \leq T.
    $$
    Take $S < R \leq T$ such that $|R - S| \leq 1$. We have
    $$
    Z^R - Z^{S} = \int_{S}^{R}\D[T-u](Y_ud\mathbf{X}_u) = \int_{S}^{R}\D[R-u]\D[T-R](Y_ud\mathbf{X}_u) = \int_{S}^{R}\D[R-u](\D[T-R] Y_u) d(\D[T-R]\mathbf{X}_u),
    $$
    where we used Proposition~\ref{prop_D_ppties}\,(vii) for the last equality. 
    By \eqref{eq:conv_int_control}, we have
    \begin{equation}\label{eq:cauchy_diff}
    \begin{aligned}
        |Z^R - Z^{S}| &\leq |\D[T-S](Y_{S}X_{S, R} + Y_{S}'\mathbb{X}^{2}_{S, R})| \\
        &\quad+ \left(\|\D[T - R] R^Y\|_{2\alpha; [S, R]}\|\D[T-R] X\|_{\alpha; [S, R]} + \|\D[T-R]Y'\|_{\alpha; [S, R]}^\wedge\|\D[T-R]\mathbb{X}^2\|_{2\alpha; [S, R]}\right)
    \end{aligned}
    \end{equation}
    Since for all $f$, by Lemma~\ref{lem:holder_weighted},
    $$
    \|\D[T-R]f\|_{\alpha; [S, R]} = \|\D[T-S][\brho]\D[T-R][\blambda - \brho]\D[R-S][-\brho]f\|_{\alpha; [S, R]} \leq \||\D[T-R][\blambda - \brho]\D[R-S][-\brho]f\|_{\alpha, \brho} \lesssim_{\brho} \||\D[T-R][\blambda - \brho]f\|_{\alpha, \brho},
    $$ the second and third terms of  the right-hand side of \eqref{eq:cauchy_diff} are bounded by
\begin{align}\label{eq:norms_control}
        \|\D[T-R][\blambda - \brho] R^Y\|_{2\alpha, \brho}\|\D[T-R][\blambda - \brho] X\|_{\alpha, \brho} + \|\D[T-R][\blambda - \brho]Y'\|_{\alpha, \brho}^\wedge\|\D[T-R][\blambda - \brho]\mathbb{X}^2\|_{2\alpha, \brho}.
    \end{align}
    The first term  of \eqref{eq:cauchy_diff}  reads
    \begin{equation}\label{eq:first_part_improper_bound}
    \begin{aligned}
        |\D[T-S](Y_{S}X_{S, T} + Y_{S}'\mathbb{X}^{2}_{S, T})| &= |\D[T-S][\blambda - \brho](\D[T-S][\brho]Y_{S}\D[T-S][\brho]X_{S, R} + \D[T-S][\brho]Y_{S}'\D[T-S][\brho]\mathbb{X}^{2}_{S, R})|\\
        &\leq \|Y\|_{\infty, \brho}\|\D[T-S][\blambda - \brho]X\|_{\alpha, \brho} + \|Y'\|_{\infty, \brho}\|\D[T-S][\blambda - \brho]\mathbb{X}^{2}\|_{2\alpha, \brho}.
    \end{aligned}  
    \end{equation}
    Combining \eqref{eq:norms_control} and \eqref{eq:first_part_improper_bound}, we obtain
    \begin{equation}
        |Z^R - Z^{S}| \lesssim \|\D[T-R][\blambda - \brho]\mathbf{X}\|_{\alpha, \brho}(\|Y\|_{\infty, \brho} + \|Y'\|_{\infty, \brho} + \|Y, Y'\|_{2\alpha, \brho}^\wedge), \quad 0 \leq R - S \leq 1.
    \end{equation}
    For the general $S < R \leq T$, there exist $\Delta \in (0, 1]$ and $n \in \N$ such that $R - S = n\Delta$. Hence, we can bound
    \begin{align}
        |Z^R - Z^S| \leq \sum_{k = 0}^{n - 1}|Z^{R - k\Delta} - Z^{R - (k + 1)\Delta}| &\lesssim \sum_{k = 0}^{\infty}\|\D[T-R + k\Delta][\blambda - \brho]\mathbf{X}\|_{\alpha, \brho}(\|Y\|_{\infty, \brho} + \|Y'\|_{\infty, \brho} + \|Y, Y'\|_{2\alpha, \brho}^\wedge).
    \end{align}
    {
    Since $\|\D[T-R + k\Delta][\blambda - \brho]\mathbf{X}\|_{\alpha, \brho} \leq \theta^{k\Delta}\|\D[T-R][\blambda - \brho]\mathbf{X}\|_{\alpha, \brho}$, where $\theta := e^{-\min_i(\blambda^i - \rho^i)} \in (0, 1)$, the series on the right-hand side is bounded by
    $$
    \sum_{k = 0}^{\infty}\|\D[T-R + k\Delta][\blambda - \brho]\mathbf{X}\|_{\alpha, \brho} \leq \dfrac{1}{1-\theta^\Delta}\|\D[T-R][\blambda - \brho]\mathbf{X}\|_{\alpha, \brho}.
    $$
    Consequently, 
    $$
    |Z^R - Z^S| \lesssim_{\blambda, \brho} \|\D[T-R][\blambda - \brho]\mathbf{X}\|_{\alpha, \brho}(\|Y\|_{\infty, \brho} + \|Y'\|_{\infty, \brho} + \|Y, Y'\|_{2\alpha, \brho}^\wedge).
    $$
    Since $\|\D[T-R][\blambda - \brho]\mathbf{X}\|_{\alpha, \brho} \to 0$ as $R \to -\infty$,}
    this implies the Cauchy criterion for $(Z^R)_{R \leq T}$ and the existence of the limit $Z = \lim\limits_{R\to-\infty}Z^R$ satisfying \eqref{eq:improper_convergence_control}.
    
    \paragraph{Step 2. Showing that $Z \in \hat{\mathcal{C}}^{\alpha, \brho}_b((-\infty, T])$.}
    
    For all $s, t \in (-\infty, T]$ such that $|t-s| \leq 1$, we apply \eqref{eq:conv_int_control}:
    \begin{align}
        |\D[T-s][\brho]\hat\delta Z_{s, t}| &= \left|\int_s^t\D[t-u]\D[T-s][\brho](Y_ud\mathbf{X}_u)\right| \\
        &\leq |\D[t-s]\D[T-s][\brho](Y_sX_{s,t})| + |\D[t-s]\D[T-s][\brho](Y'_s\mathbb{X}^{2}_{s, t})| + (\|R^Y\|_{2\alpha, \brho}\|\mathbb{X}^{1}\|_{\alpha, \brho} + \|Y'\|_{\alpha, \brho}^\wedge\|\mathbb{X}^2\|_{2\alpha, \brho})|t - s|^{3\alpha} \\
        &\leq \|Y\|_{\infty, \brho}\|\mathbb{X}^{1}\|_{\alpha, \brho}|t-s|^\alpha + \|Y'\|_{\infty, \brho}\|\mathbb{X}^{2}\|_{2\alpha, \brho}|t-s|^{2\alpha} + (\|R^Y\|_{2\alpha, \brho}\|\mathbb{X}^{1}\|_{\alpha, \brho} + \|Y'\|_{\alpha, \brho}^\wedge\|\mathbb{X}^2\|_{2\alpha, \brho})|t - s|^{3\alpha},
    \end{align}
    so that
    \begin{align}
        \|Z\|_{\alpha,\brho}^\wedge &\leq  \|Y\|_{\infty, \brho}\|\mathbb{X}^{1}\|_{\alpha, \brho} + \|Y'\|_{\infty, \brho}\|\mathbb{X}^{2}\|_{2\alpha, \brho} + (\|R^Y\|_{2\alpha, \brho}\|\mathbb{X}^{1}\|_{\alpha, \brho} + \|Y'\|_{\alpha, \brho}^\wedge\|\mathbb{X}^2\|_{2\alpha, \brho}) \\
        &\leq \|\mathbf{X}\|_{\alpha, \brho}(\|Y\|_{\infty, \brho} + \|Y'\|_{\infty, \brho} + \|Y, Y'\|_{2\alpha, \brho}^\wedge).
    \end{align}
    To bound $\|Z\|_{\infty, \brho}$, we take $s \leq T$ and write
    \begin{align}
        |\D[T-s][\brho]Z_s| = \left|\D[T-s][\brho]\int_{-\infty}^s\D[s-u](Y_ud\mathbf{X}_u)\right| &\leq \sum_{k \geq 0} \left|\D[T-s][\brho]\D[k]\int_{s-k-1}^{s-k}\D[(s - k)-u](Y_ud\mathbf{X}_u)\right| \\
        &= \sum_{k \geq 0} \left|\D[k][\blambda - \brho]\int_{s-k-1}^{s-k}\D[(s - k)-u]\D[T-(s-k)][\brho](Y_ud\mathbf{X}_u)\right|.
    \end{align}
    Applying the bound \eqref{eq:conv_int_control} to each integral and proceeding as in Step 1 of the proof, we can show that 
    \begin{align}
        |\D[T-s][\brho]Z_s| &\lesssim_{\blambda, \brho} \sum_{k \geq 0} \|\D[k][\blambda - \brho]\mathbf{X}\|_{\alpha, \brho}(\|Y\|_{\infty, \brho} + \|Y'\|_{\infty, \brho} + \|Y, Y'\|_{2\alpha, \brho}^\wedge) \\
        &\lesssim_{\blambda, \brho}  \|\mathbf{X}\|_{\alpha, \brho}(\|Y\|_{\infty, \brho} + \|Y'\|_{\infty, \brho} + \|Y, Y'\|_{2\alpha, \brho}^\wedge).
    \end{align}
    Taking the supremum in $s$ finishes the proof of \eqref{eq:improper_bound_1}.

    \paragraph{Step 3. Showing that $(Z, Y) \in \mathscr{D}^{2\alpha, \brho}_{\D[], X}((-\infty, T])$.}
    For all $s < t \leq T$ such that $|t - s| \leq 1$, we have
    \begin{align}
        |\D[T-s][\brho](\hat\delta Y_{s,t})| &\leq |\D[t-s](\D[T-s][\brho]Y'_s\D[T-s][\brho]X_{s, t})| + |\D[T-s][\brho]R^Y_{s, t}| \\
        &\leq \|Y'\|_{\infty, \brho}\|\mathbb{X}^{1}\|_{\alpha, \brho}|t-s|^{\alpha} + \|R^Y\|_{2\alpha, \brho}|t-s|^{2\alpha},
    \end{align}
    so that
    \begin{equation}
        \|Y\|_{\alpha, \brho} \leq \|Y'\|_{\infty, \brho}\|\mathbb{X}^{1}\|_{\alpha, \brho} + \|R^Y\|_{2\alpha, \brho} \leq \|\mathbf{X}\|_{\alpha, \brho}(\|Y'\|_{\infty, \brho} + \|Y, Y'\|_{2\alpha, \brho}^\wedge) + \|Y, Y'\|_{2\alpha, \brho}^\wedge.
    \end{equation}
    To bound $\|R^Z\|_{2\alpha, \brho}$, we again use \eqref{eq:conv_int_control} and proceed as in Step 2:
    \begin{align}
        |\D[T-s][\brho]R^Z_{s, t}| &= |\D[T-s][\brho]\hat\delta Z_{s, t} - \D[T-s][\brho]\D[t-s](Y_sX_{s,t})| = \left|\int_s^t\D[t-u]\D[T-s][\brho](Y_ud\mathbf{X}_u) - \D[t-s]\D[T-s][\brho](Y_sX_{s,t})\right| \\
        &\leq |\D[t-s]\D[T-s][\brho](Y'_s\mathbb{X}^{2}_{s, t})| + (\|R^Y\|_{2\alpha, \brho}\|\mathbb{X}^{1}\|_{\alpha, \brho} + \|Y'\|_{\alpha, \brho}^\wedge\|\mathbb{X}^2\|_{2\alpha, \brho})|t - s|^{3\alpha} \\
        &\leq \|Y'\|_{\infty, \brho}\|\mathbb{X}^{2}\|_{2\alpha, \brho}|t-s|^{2\alpha} + (\|R^Y\|_{2\alpha, \brho}\|\mathbb{X}^{1}\|_{\alpha, \brho} + \|Y'\|_{\alpha, \brho}^\wedge\|\mathbb{X}^2\|_{2\alpha, \brho})|t - s|^{3\alpha},
    \end{align}
    and we obtain
    \begin{align}
        \|R^Z\|_{2\alpha, \brho} &\leq\|Y'\|_{\infty, \brho}\|\mathbb{X}^{2}\|_{2\alpha, \brho} + (\|R^Y\|_{2\alpha, \brho}\|\mathbb{X}^{1}\|_{\alpha, \brho} + \|Y'\|_{\alpha, \brho}^\wedge\|\mathbb{X}^2\|_{2\alpha, \brho}) \\
        &\leq \|\mathbf{X}\|_{\alpha, \brho}(\|Y'\|_{\infty, \brho} + \|Y, Y'\|_{2\alpha, \brho}^\wedge).
    \end{align}
    This finishes the proof.
\end{proof}
\begin{corollary}\label{corollary:fm_sig_s_inf}
    If $\mathbf{X} \in \mathscr{C}^{\alpha, \brho}((-\infty, T])$ for some $\alpha\in (\frac13, \frac12]$ and $0 \prec \brho \prec \blambda$, then the EFM-signature $\ssigX[]$ is well-defined. More precisely, for all $n \geq 0$, we have $\mathbb{X}^{\blambda, n}_{\cdot} \in \hat{\mathcal{C}}^{\alpha, \brho}_b((-\infty, T])$ and $(\mathbb{X}^{\blambda, n}_{\cdot}, \mathbb{X}^{\blambda, n - 1}_{\cdot}) \in  \mathscr{D}^{2\alpha, \brho}_{\D[], X}((-\infty, T])$, and the following bounds hold (with $\mathbb{X}^{\blambda, k}_{\cdot} \equiv 0$ for $k < 0$):
    \begin{equation}\label{eq:fm_sig_bnd_1}
        \|\mathbb{X}^{\blambda, n}_{\cdot}\|_{\alpha, \brho}^\wedge + \|\mathbb{X}^{\blambda, n}_{\cdot}\|_{\infty, \brho} \lesssim \|\mathbf{X}\|_{\alpha, \brho}(\|\mathbb{X}^{\blambda, n - 1}_{\cdot}\|_{\infty, \brho} + \|\mathbb{X}^{\blambda, n-2}_{\cdot}\|_{\infty, \brho} + \|\mathbb{X}^{\blambda, n-1}_{\cdot}, \mathbb{X}^{\blambda, n-2}_{\cdot}\|_{2\alpha, \brho}^\wedge),
    \end{equation}
    \begin{equation}\label{eq:fm_sig_bnd_2}
        \|\mathbb{X}^{\blambda, n}_{\cdot}, \mathbb{X}^{\blambda, n-1}_{\cdot}\|_{2\alpha, \brho}^\wedge \lesssim \|\mathbf{X}\|_{\alpha, \brho}(\|\mathbb{X}^{\blambda, n-2}_{\cdot}\|_{\infty, \brho} + \|\mathbb{X}^{\blambda, n-1}_{\cdot}, \mathbb{X}^{\blambda, n-2}_{\cdot}\|_{2\alpha, \brho}^\wedge) + \|\mathbb{X}^{\blambda, n-1}_{\cdot}, \mathbb{X}^{\blambda, n-2}_{\cdot}\|_{2\alpha, \brho}^\wedge.
    \end{equation}
\end{corollary}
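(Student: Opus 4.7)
The proof is a direct induction on $n$, using Theorem~\ref{thm:improper_rough} as the engine that promotes level $n$ of the EFM-signature to level $n+1$. The key observation is that if we identify $Y = \mathbb{X}^{\blambda, n-1}_{\cdot}$ with Gubinelli derivative $Y' = \mathbb{X}^{\blambda, n-2}_{\cdot}$ (with the convention $\mathbb{X}^{\blambda, k}_{\cdot} \equiv 0$ for $k < 0$), then the improper rough convolution produced by the theorem is exactly $Z = \mathbb{X}^{\blambda, n}_{\cdot}$, and $(Z, Y) = (\mathbb{X}^{\blambda, n}_{\cdot}, \mathbb{X}^{\blambda, n-1}_{\cdot})$ inherits the controlled-path structure, with the quantitative bounds \eqref{eq:fm_sig_bnd_1}–\eqref{eq:fm_sig_bnd_2} following verbatim from \eqref{eq:improper_bound_1}–\eqref{eq:improper_bound_2}.

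For the base case $n = 0$, $\mathbb{X}^{\blambda, 0}_{\cdot} \equiv \emptyword$ is constant in $t$, and because $\lambda^{\emptyword} = 0$ we have $\D[h]\emptyword = \emptyword$ for every $h$; consequently $\hat\delta \mathbb{X}^{\blambda, 0} \equiv 0$ and $\|\mathbb{X}^{\blambda, 0}_{\cdot}\|_{\alpha, \brho}^\wedge = 0$, while $\|\mathbb{X}^{\blambda, 0}_{\cdot}\|_{\infty, \brho} = 1$. With $Y' = 0$ the remainder $R^Y$ vanishes identically, so $(\mathbb{X}^{\blambda, 0}_{\cdot}, \mathbb{X}^{\blambda, -1}_{\cdot}) \in \mathscr{D}^{2\alpha, \brho}_{\D[], X}((-\infty, T])$ with vanishing norm, and Theorem~\ref{thm:improper_rough} applies to produce $\mathbb{X}^{\blambda, 1}_{\cdot}$ with the required regularity (matching \eqref{eq:fm_sig_bnd_1}–\eqref{eq:fm_sig_bnd_2} for $n = 1$ trivially).

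For the inductive step, assume that $\mathbb{X}^{\blambda, k}_{\cdot} \in \hat{\mathcal{C}}^{\alpha, \brho}_b((-\infty, T])$ for every $k \le n-1$ and that $(\mathbb{X}^{\blambda, n-1}_{\cdot}, \mathbb{X}^{\blambda, n-2}_{\cdot}) \in \mathscr{D}^{2\alpha, \brho}_{\D[], X}((-\infty, T])$. Viewing $\mathbb{X}^{\blambda, n-1}_{\cdot}$ as a $(\R^d)^{\otimes (n-1)}$-valued controlled path and applying Theorem~\ref{thm:improper_rough} to the $(\R^d)^{\otimes n}$-valued improper convolution
\begin{equation}
\mathbb{X}^{\blambda, n}_t \;=\; \lim_{R \to -\infty} \int_R^t \D[t-u]\!\left(\mathbb{X}^{\blambda, n-1}_u \, d\mathbf{X}_u\right),
\end{equation}
we obtain both $\mathbb{X}^{\blambda, n}_{\cdot} \in \hat{\mathcal{C}}^{\alpha, \brho}_b((-\infty, T])$ and $(\mathbb{X}^{\blambda, n}_{\cdot}, \mathbb{X}^{\blambda, n-1}_{\cdot}) \in \mathscr{D}^{2\alpha, \brho}_{\D[], X}((-\infty, T])$. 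Substituting $Y \leftrightarrow \mathbb{X}^{\blambda, n-1}_{\cdot}$, $Y' \leftrightarrow \mathbb{X}^{\blambda, n-2}_{\cdot}$, $Z \leftrightarrow \mathbb{X}^{\blambda, n}_{\cdot}$ into \eqref{eq:improper_bound_1}–\eqref{eq:improper_bound_2} yields exactly the stated inequalities.

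There is essentially no technical obstacle here beyond bookkeeping: the only mild point to check is that the ambient spaces $(\R^d)^{\otimes n}$ shift with $n$ and that Theorem~\ref{thm:improper_rough}, though written for a generic Banach-valued setting, applies equally to the tensor powers of $\R^d$ equipped with the projective (or any reasonable admissible) tensor norm, and that the operator $\D[]$ restricts to each level as required. Once this is observed, the induction proceeds mechanically.
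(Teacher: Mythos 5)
Your proof is correct and follows essentially the same route as the paper: the base case verifies that $(\mathbb{X}^{\blambda,0}_{\cdot},0)$ is a controlled path with vanishing remainder (using $\lambda^{\emptyword}=0$ so that $\hat\delta\mathbb{X}^{\blambda,0}\equiv 0$), and the inductive step feeds $(Y,Y')=(\mathbb{X}^{\blambda,n-1}_{\cdot},\mathbb{X}^{\blambda,n-2}_{\cdot})$ into Theorem~\ref{thm:improper_rough}, reading off \eqref{eq:fm_sig_bnd_1}--\eqref{eq:fm_sig_bnd_2} directly from \eqref{eq:improper_bound_1}--\eqref{eq:improper_bound_2}. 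Your added remark about the ambient spaces $(\R^d)^{\otimes n}$ is consistent with the paper's convention that Hölder functions take values in tensor powers of $\R^d$.
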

\begin{proof}
{
    The proof follows the same argument as that of Corollary~\ref{corollary:fm_sig_s_fin} with the following modifications: the spaces $\mathscr{C}^{\alpha, \brho}([s, t]),\ \hat{\mathcal{C}}^{\alpha}([s, t])$, and $\mathscr{D}^{2\alpha}_{\D[], X}([s, t])$ are replaced by $\mathscr{C}^{\alpha, \brho}((-\infty, T]), \ \hat{\mathcal{C}}^{\alpha, \brho}_b((-\infty, T])$, and $\mathscr{D}^{2\alpha, \brho}_{\D[], X}((-\infty, T])$, respectively. Furthermore, Theorem~\ref{thm:improper_rough} is used instead of Theorem~\ref{thm:rough_conv_finite_interval} to perform the induction step. The bounds \eqref{eq:fm_sig_bnd_1}--\eqref{eq:fm_sig_bnd_2} then follow from recursively applying \eqref{eq:improper_bound_1}--\eqref{eq:improper_bound_2}.
}
    
\end{proof}

\subsection{Fading memory property: proof of Theorem~\ref{thm:rough_fm}}\label{sect:rough_fm_proof}
    To prove the continuity at a fixed $\mathbf{X}$, we can suppose that there exists a constant $M$ such that
    \begin{equation}\label{eq:norm_ball}
        \|\mathbf{X}\|_{\alpha, \brho} \leq M,\quad \|\mathbf{Y}\|_{\alpha, \brho} \leq M,
    \end{equation}
    since $\|\mathbf{X}\|_{\alpha, \brho} = \varrho_{\alpha, \brho}(\mathbf{0}, \mathbf{X})$. 

    Note that for $s > -\infty$, the result follows from the stability result for rough convolutions by \cite[Lemma 3.13]{Gerasimovis2018HrmandersTF}. Thus, it is sufficient to consider only the case $s = -\infty$.
    
    We proceed by induction in $n$. For $n = 0$, $\mathbb{X}_{t}^{\blambda, n} \equiv \mathbb{Y}_{t}^{\blambda, n} \equiv 1$, and the statement holds trivially. Suppose that $\mathbf{X} \mapsto \mathbb{X}_{t}^{\blambda, k}$ is continuous for all $k \leq n$ and $t \leq T$. 
    
    Fix $t \leq T$ and $\varepsilon > 0$. We will show that one can choose $r > 0$ such that $|\mathbb{X}_{t}^{\blambda, n+1} - \mathbb{Y}_{t}^{\blambda, n+1}| < \varepsilon$ for all $\mathbf{X}, \mathbf{Y}$ satisfying $\varrho_{\alpha, \brho}(\mathbf{X}, \mathbf{Y}) < r$.
    Recall that
    \begin{equation}\label{eq:recursive_int_fm_sig}
        \mathbb{X}_{t}^{\blambda, n+1} = \int_{-\infty}^t\D[t-u](\mathbb{X}_{u}^{\blambda, n}d\mathbf{X}_u) = \lim_{R \to -\infty} \int_{R}^t\D[t-u](\mathbb{X}_{u}^{\blambda, n}d\mathbf{X}_u).
    \end{equation}
    The bound \eqref{eq:improper_convergence_control} yields
    \begin{equation}
        \left|\mathbb{X}_{t}^{\blambda, n+1} - \int_R^t\D[t - u](\mathbb{X}_{u}^{\blambda, n}d\mathbf{X}_u)\right| 
        \lesssim \|\D[t-{R}][\blambda-\brho]\mathbf{X}\|_{\alpha, \brho}(\|\mathbb{X}_{\cdot}^{\blambda, n}\|_{\infty, \brho} + \|\mathbb{X}_{\cdot}^{\blambda, n - 1}\|_{\infty, \brho} + \|\mathbb{X}_{\cdot}^{\blambda, n}, \mathbb{X}_{\cdot}^{\blambda, n-1}\|_{2\alpha, \brho}).
    \end{equation}
    Applying recursively \eqref{eq:fm_sig_bnd_1}, \eqref{eq:fm_sig_bnd_2}, and using \eqref{eq:norm_ball}, we can show that
    \begin{equation}
        \left|\mathbb{X}_{t}^{\blambda, n+1} - \int_R^t\D[t - u](\mathbb{X}_{u}^{\blambda, n}d\mathbf{X}_u)\right| \lesssim_M \|\D[T-{ R}][\blambda-\brho]\mathbf{X}\|_{\alpha, \brho} \lesssim_M e^{-\mu(T-{ R})},
    \end{equation}
    where $\mu = \min_{i}(\blambda^i - \rho^i)$. This shows that the convergence \eqref{eq:recursive_int_fm_sig} is uniform in $\mathbf{X}$ satisfying \eqref{eq:norm_ball}, so that we can choose $R < T$ such that 
    \begin{equation}
        \left|\mathbb{X}_{t}^{\blambda, n+1} - \int_R^t\D[t - u](\mathbb{X}_{u}^{\blambda, n}d\mathbf{X}_u)\right| \leq \frac{\varepsilon}{4},
    \end{equation}
    for all $\mathbf{X}$ satisfying \eqref{eq:norm_ball}. It remains to control the difference 
    \begin{equation}\label{eq:diff_to_control}
        \left|\int_R^t\D[t - u](\mathbb{X}_{u}^{\blambda, n}d\mathbf{X}_u) - \int_R^t\D[t - u](\mathbb{Y}_{u}^{\blambda, n}d\mathbf{Y}_u)\right|.
    \end{equation}
    We note that $\mathbb{X}_{u}^{\blambda, n}$ satisfies the Chen's identity
    \begin{equation}
        \mathbb{X}_{u}^{\blambda, n} = \sum_{k=0}^n(\D[u - R]\mathbb{X}_{R}^{\blambda, k}) \otimes \mathbb{X}_{R, u}^{\blambda, n - k},
    \end{equation}
    so that 
    \begin{align}
        \int_R^t\D[t - u](\mathbb{X}_{u}^{\blambda, n}d\mathbf{X}_u) &= \sum_{k=0}^n \int_R^t(\D[t - R]\mathbb{X}_{R}^{\blambda, k}) \otimes \D[t - u](\mathbb{X}_{R, u}^{\blambda, n - k}d\mathbf{X}_u) \\
        &= \sum_{k=0}^n (\D[t - R]\mathbb{X}_{R}^{\blambda, k}) \otimes \int_R^t\D[t - u](\mathbb{X}_{R, u}^{\blambda, n - k}d\mathbf{X}_u) \\
        &= \sum_{k=0}^n (\D[t - R]\mathbb{X}_{R}^{\blambda, k}) \otimes \mathbb{X}_{R, t}^{\blambda, n - k + 1}.
    \end{align}
    Hence, it is sufficient to control
    \begin{align}\label{eq:fm_sig_diff}
        &(\D[t - R]\mathbb{X}_{R}^{\blambda, k}) \otimes \mathbb{X}_{R, t}^{\blambda, n - k + 1} - (\D[t - R]\mathbb{Y}_{R}^{\blambda, k}) \otimes \mathbb{Y}_{R, t}^{\blambda, n - k + 1} \\
         &=(\D[t - R]\mathbb{X}_{R}^{\blambda, k}) \otimes (\mathbb{X}_{R, t}^{\blambda, n - k + 1} - \mathbb{Y}_{R, t}^{\blambda, n - k + 1}) + (\D[t - R]\mathbb{X}_{R}^{\blambda, k} - \D[t - R]\mathbb{Y}_{R}^{\blambda, k}) \otimes \mathbb{Y}_{R, t}^{\blambda, n - k + 1},
    \end{align}
    for all $k = 0, \ldots, n$.
    In the first term, we have
    \begin{align}
        |\D[t - R]\mathbb{X}_{R}^{\blambda, k}| = |\D[t - R][\blambda - \brho]\D[t - R][\brho]\mathbb{X}_{R}^{\blambda, k}| \lesssim_{R, t, T, \brho} \|\mathbb{X}_{\cdot}^{\blambda, k}\|_{\infty, \brho} \lesssim_M 1. 
    \end{align} 
     where we recursively applied \eqref{eq:fm_sig_bnd_1} and \eqref{eq:fm_sig_bnd_2}. The second part of the first term
    \begin{equation}
        |\mathbb{X}_{R, t}^{\blambda, n - k + 1} - \mathbb{Y}_{R, t}^{\blambda, n - k + 1}| \lesssim_{R, t} \|\mathbb{X}_{R, \cdot}^{\blambda, n - k + 1} - \mathbb{Y}_{R, \cdot}^{\blambda, n - k + 1}\|_{\alpha; [R, T]}.
    \end{equation}
    By the stability result for rough convolutions \cite[Lemma 3.13]{Gerasimovis2018HrmandersTF}, this norm tends to $0$ as $\varrho_{\alpha, \brho}(\mathbf{X}, \mathbf{Y}) \to 0$.

    In the second term, the second multiplier is bounded by the standard scaling argument
    \begin{equation}
        |\mathbb{Y}_{R, t}^{\blambda, n - k + 1}|\lesssim_{R, t}  \left(\|Y\|_{\alpha; [R, t]} + \sqrt{\|\mathbb{Y}^{2}\|_{2\alpha; [R, t]}}\right)^{n - k + 1} \lesssim_{R,T, \brho, M} 1,
    \end{equation}
    while the first one
    $$
    |\D[t - R](\mathbb{X}_{R}^{\blambda, k} - \mathbb{Y}_{R}^{\blambda, k})| \to 0,
    $$
    as $\varrho_{\alpha, \brho}(\mathbf{X}, \mathbf{Y}) \to 0$ by the induction hypothesis. Thus, one can choose $r > 0$ such that 
    \begin{equation}
        \left|(\D[t - R]\mathbb{X}_{R}^{\blambda, k}) \otimes \mathbb{X}_{R, t}^{\blambda, n - k + 1} - (\D[t - R]\mathbb{Y}_{R}^{\blambda, k}) \otimes \mathbb{Y}_{R, t}^{\blambda, n - k + 1}\right| \leq \dfrac{\varepsilon}{2(n + 1)},
    \end{equation}
    for $\mathbf{X}, \mathbf{Y}$ such that $\varrho_{\alpha, \brho}(\mathbf{X}, \mathbf{Y}) < r$. This implies that \eqref{eq:diff_to_control} is less than $\dfrac{\varepsilon}{2}$ and concludes the proof.

\appendix

\section{Proof of Lemma~\ref{lem:linear_path}}\label{sect:linear_path_proof}
The computation reduces to evaluating the iterated integral \eqref{eq:interated_exp_int}. After a change of variables 
$$u_j = t - \sum\limits_{k = 1}^{n - j - 1}\tau_k,\quad j = 1, \ldots, n,$$ 
this integral becomes
    \begin{equation}
        \int_{\tau_j \geq 0,\ \sum\limits_{j=1}^n \tau_j \leq t - s} e^{-\sum\limits_{j=1}^n\blambda^{i_j}\sum\limits_{k = 1}^{n - j - 1}\tau_k}\,d\tau_1\ldots d\tau_n = \int_{\tau_j \geq 0,\ \sum\limits_{j=1}^n \tau_j \leq t - s} e^{-\sum\limits_{k=1}^n\left(\sum\limits_{j = 1}^{n - k - 1}\blambda^{i_j}\right)\tau_k}\,d\tau_1\ldots d\tau_n.
    \end{equation}
    Setting $\mu_k = \sum\limits_{j = 1}^{n - k - 1}\blambda^{i_j}$ for $k = 1, \ldots, n$ and introducing $\mu_{n + 1} = 0$, as well as $\tau_{n + 1} = (t - s) - \sum\limits_{k=1}^n \tau_k$, the integral becomes one over the simplex $C := \left\{\tau = (\tau_1, \ldots, \tau_{n+1}) \in \R^{n+1}\colon\ \sum\limits_{k=1}^{n+1}\tau_k = t - s\right\}$:
    \begin{equation}
        \int_{C} e^{-\sum\limits_{k=1}^{n+1}\mu_k\tau_k}\,d\tau.
    \end{equation}
    By induction, this integral can be written as the convolution of exponential functions $f_k(t) = e^{-\mu_k t}$:
    \begin{equation}
        \int_{C} e^{-\sum\limits_{k=1}^{n+1}\mu_k\tau_k}\,d\tau = (f_1 \star \ldots \star f_{n+1})(t - s).
    \end{equation}
    The Laplace transform of each $f_k$ is $L[f_k](w) = \dfrac{1}{w + \mu_k}$, so
    \begin{equation}
        L\left[\int_{C} e^{-\sum\limits_{k=1}^{n+1}\mu_k\tau_k}\,d\tau\right](w) = \prod\limits_{k=1}^{n+1}\dfrac{1}{w + \mu_k} = \sum\limits_{k = 1}^{n+1}\dfrac{c_k}{w + \mu_k},
    \end{equation}
    where the coefficients $c_k = \prod\limits_{l \neq k}\dfrac{1}{\mu_l - \mu_k}$ come from the partial fraction decomposition, valid when all $\mu_k$ are distinct. Applying the inverse Laplace transform yields
    \begin{equation}
         \int_{C} e^{-\sum\limits_{k=1}^{n+1}\mu_k\tau_k}\,d\tau = \sum_{k=1}^{n+1}c_ke^{-\mu_k (t-s)}.
    \end{equation}
    A straightforward reindexing $k \mapsto n + 1 - k$ concludes the proof of the first formula. The second one is obtained by taking the limit $(t - s)\to +\infty$.

\section{Proof of Lemma~\ref{lemma:L1_bound}}\label{app:lemma_l1_proof}
\begin{proof}
We prove the lemma only in the case of one-dimensional Brownian motion $d = 1$. The multidimensional generalization is straightforward. {We start by observing that  
    \begin{equation}\label{eq:l2_norm_series}
        \E[\|\ssig[0, t]\|_1] = \sum_{n \geq 0} \sum_{|\word{v}| = n} \E[|\sig[0, t]^{\blambda, \word{v}}|] \leq \sum_{n \geq 0} \sum_{|\word{v}| = n} \sqrt{\E[|\sig[0, t]^{\blambda, \word{v}}|^2]}.
    \end{equation}  
    We first recursively bound $\E[|\sig[0, t]^{\blambda, \word{v}}|^2]$ for each $\word{v}$ and then prove the convergence of the series in \eqref{eq:l2_norm_series}.}
    
    For a word $\word{v}$ such that $|\word{v}| = n > 0$, we consider three cases. In the first case, where $\word{v} = \word{v'0}$, apply Hölder's inequality:
    \begin{equation}
        \E\left[\left(\int_{0}^t e^{-\blambda^{\word{v}}(t-s)} \sig[0, s]^{\blambda, \word{v'}}\,ds\right)^2\right]
        \leq \int_{0}^t e^{-\blambda^{\word{v}}(t-s)} \E\left[|\sig[0, s]^{\blambda, \word{v'}}|^2\right]\,ds
        \int_{0}^t e^{-\blambda^{\word{v}}(t-s)}\,ds
        \leq \dfrac{1}{(\blambda^{\word{v}})^2} \sup_{s \geq 0} \E\left[|\sig[0, s]^{\blambda, \word{v'}}|^2\right].
    \end{equation}

    For $\word{v} = \word{v'1}$, with $\word{v'}$ not ending in $\word{1}$, the Stratonovich integral coincides with the Itô integral, and we have
    \begin{equation}
    \E\left[\left(\int_{0}^te^{-\blambda^{\word{v}}(t-s)}\sig[0, s]^{\blambda, \word{v'}}\circ dW_s\right)^2\right] = \int_{0}^te^{-2\blambda^{\word{v}}(t-s)}\E\left[|\sig[0, s]^{\blambda, \word{v'}}|^2\right]\,ds \leq \dfrac{1}{2\blambda^{\word{v}}} \sup_{s \geq 0} \E\left[|\sig[0, s]^{\blambda, \word{v'}}|^2\right]. 
    \end{equation}
    Finally, if $\word{v} = \word{v''11}$, then
    \begin{align}
    \E\left[\left(\int_{0}^te^{-\blambda^{\word{v}}(t-s)}\sig[0, s]^{\blambda, \word{v'}}\circ dW_s\right)^2\right] &\leq \E\left[\left(\int_{0}^te^{-\blambda^{\word{v}}(t-s)}\sig[0, s]^{\blambda, \word{v'}} dW_s\right)^2\right] + \E\left[\left(\dfrac{1}{2}\int_{0}^te^{-\blambda^{\word{v}}(t-s)}\sig[0, s]^{\blambda, \word{v''}} ds\right)^2\right] \\ 
    &\leq \dfrac{1}{2\blambda^{\word{v}}} \sup_{s \geq 0} \E\left[|\sig[0, s]^{\blambda, \word{v'}}|^2\right] + \dfrac{1}{(2\blambda^{\word{v}})^2} \sup_{s \geq 0} \E\left[|\sig[0, s]^{\blambda, \word{v''}}|^2\right], 
    \end{align}
    where the integrals are bounded as in the first two cases.
    
    Combining these three bounds and noting that the bound does not depend on $t$, we obtain
    \begin{align}\label{eq:recursive_bound}
        \sup_{t \geq 0} \E\left[|\sig[0, t]^{\blambda, \word{v}}|^2\right]
        &\leq \left(\dfrac{1}{(\blambda^{\word{v}})^2} \lor \dfrac{1}{2\blambda^{\word{v}}} \right) \sup_{t \geq 0} \E\left[|\sig[0, t]^{\blambda, \word{v'}}|^2\right] + \dfrac{1}{(2\blambda^{\word{v}})^2} \sup_{s \geq 0} \E\left[|\sig[0, s]^{\blambda, \word{v''}}|^2\right]
        \\
        &\leq \dfrac{C_{\blambda}}{n} \sup_{t \geq 0} \E\left[|\sig[0, t]^{\blambda, \word{v'}}|^2\right] + \dfrac{C_{\blambda}}{n^2} \sup_{t \geq 0} \E\left[|\sig[0, t]^{\blambda, \word{v''}}|^2\right],
    \end{align}
    since $\blambda^{\word{v}} \geq n \min\limits_{i \in \{0, 1\}} \blambda^i$, with a constant $C_{\blambda} \geq 1$ depending only on $\blambda$.

Applying \eqref{eq:recursive_bound} recursively, we obtain
\begin{equation}
    \sup_{t \geq 0} \E\left[|\sig[0, t]^{\blambda, \word{v}}|^2\right] \leq \dfrac{(2C_{\blambda})^n}{n!}.
\end{equation}

Finally, noting that there are $2^n$ terms in \eqref{eq:l2_norm_series} corresponding to words of length $n$, and taking the supremum, we obtain
\begin{equation}
    {\sup_{t \geq 0} \E[\|\ssig[0, t]\|_1] \leq \sum_{n \geq 0} 2^n \sqrt{\dfrac{(2C_{\blambda})^n}{n!}} < +\infty,}
\end{equation}
which completes the proof of the second bound. The first one can be shown analogously.
\end{proof}

\section{Useful lemma}
\begin{lemma}\label{lemma:exp_representations}
    The following relationship holds:
    \begin{equation}\label{eq:exp_i_repr}
        \int_{-\infty}^T e^{-(k\blambda^0 + \blambda^i)(T - t)}\bracketssig[t][X]{\word{i}}dt = \sum\limits_{m=1}^{k} b_{k,m}\bracketssig[T][X]{\word{i0}\conpow{m}}, \quad k \geq 1,
    \end{equation}
    with $b_{k, m} = (-\blambda^0)^{m-1}\dfrac{(k - 1)!}{(k - m )!},\ m = 1, \ldots, k$.
\end{lemma}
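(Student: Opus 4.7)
The plan is to proceed by iterated integration by parts. For $k \geq 1$ and $m \geq 0$, introduce the auxiliary quantities
$$
J_k^{(m)} := \int_{-\infty}^T e^{-(k\lambda^0+\lambda^i)(T-t)}\bracketssig[t][X]{\word{i0}\conpow{m}}\,dt,
$$
so that the left-hand side of \eqref{eq:exp_i_repr} equals $J_k^{(0)}$. Recall that for $m \geq 1$ the coordinate $t \mapsto \bracketssig[t][X]{\word{i0}\conpow{m}}$ is absolutely continuous and, since the $\word{0}$-component of the augmented path is simply $t$, it satisfies
$$
d\bracketssig[t][X]{\word{i0}\conpow{m}} = -(\lambda^i + m\lambda^0)\bracketssig[t][X]{\word{i0}\conpow{m}}\,dt + \bracketssig[t][X]{\word{i0}\conpow{m-1}}\,dt.
$$

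Rearranging this identity and integrating against $f(t) = e^{-(k\lambda^0+\lambda^i)(T-t)}$ (which satisfies $f'(t) = (k\lambda^0+\lambda^i)f(t)$), the boundary term at $t=T$ yields $\bracketssig[T][X]{\word{i0}\conpow{m}}$ while the two exponent terms combine into the linear recursion
$$
J_k^{(m-1)} = \bracketssig[T][X]{\word{i0}\conpow{m}} + (m-k)\lambda^0\, J_k^{(m)}, \qquad m \geq 1.
$$
The crucial feature is that at $m=k$ the prefactor $(m-k)\lambda^0$ vanishes, so the recursion closes after $k$ steps with $J_k^{(k-1)} = \bracketssig[T][X]{\word{i0}\conpow{k}}$. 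Unrolling the recursion from $m=0$ up to $m=k-1$ and substituting this terminal value produces the finite sum
$$
J_k^{(0)} = \sum_{m=1}^{k}\Bigg(\prod_{j=1}^{m-1}(j-k)\lambda^0\Bigg)\bracketssig[T][X]{\word{i0}\conpow{m}},
$$
and a routine simplification $\prod_{j=1}^{m-1}(j-k)\lambda^0 = (-\lambda^0)^{m-1}(k-1)!/(k-m)! = b_{k,m}$ identifies the coefficients.

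The only analytical point that needs care is the vanishing of the boundary term at $t \to -\infty$ and the absolute convergence of each $J_k^{(m)}$. Both follow from Theorem~\ref{thm:fm_sig_existence}, which places $\bracketssig[\cdot][X]{\word{i0}\conpow{m}}$ in $\hat{\mathcal{C}}^{\alpha,\brho}_b((-\infty,T])$ for some $0 \prec \brho \prec \blambda$. This yields the pointwise bound $|\bracketssig[t][X]{\word{i0}\conpow{m}}| \lesssim e^{(\rho^i+m\rho^0)(T-t)}$, and since $\rho^i + m\rho^0 < \lambda^i + m\lambda^0 \leq \lambda^i + k\lambda^0$ for all $0 \leq m \leq k$, the product $f(t)\bracketssig[t][X]{\word{i0}\conpow{m}}$ decays exponentially and is integrable on $(-\infty,T]$. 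Once this justification is in place, the remainder of the argument is pure algebraic bookkeeping.
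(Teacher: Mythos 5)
Your proof is correct and rests on the same algebraic mechanism as the paper's: the first-order ODE in $T$ satisfied by the coordinates $\bracketssig[T][X]{\word{i0}\conpow{m}}$ and the resulting coefficient recursion $b_{k,m+1}=(m-k)\lambda^0\,b_{k,m}$, which closes precisely because the prefactor vanishes at $m=k$. The only difference is organizational --- you derive the identity forward by unrolling the recursion on the auxiliary integrals $J_k^{(m)}$ obtained from integration by parts, whereas the paper verifies the claimed expansion by applying It\^o's decomposition to both sides and matching the $dt$-terms --- and your version has the added merit of making explicit the absolute convergence of each $J_k^{(m)}$ and the vanishing of the boundary term at $-\infty$ via the bound from Theorem~\ref{thm:fm_sig_existence}, which the paper's proof leaves implicit.
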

\begin{proof}
    Applying Itô's decomposition to the both parts of \eqref{eq:exp_i_repr}, and equating the $dt$-terms, we obtain
    \begin{equation}
        -(k\blambda^0 + \blambda^i) \int_{-\infty}^T e^{-(k\blambda^0 + \blambda^i)(T - t)}\bracketssig[t][X]{\word{i}}dt + \bracketssig[T][X]{\word{i}} = -(m\blambda^0 + \blambda^i)\sum\limits_{m=1}^{k}b_{k,m}\bracketssig[T][X]{\word{i0}\conpow{m}} + \sum\limits_{m=1}^{k} b_{k,m}\bracketssig[T][X]{\word{i0}\conpow{(m-1)}},
    \end{equation}
    or, equivalently,
    \begin{equation}
        -(k\blambda^0 + \blambda^i)\sum\limits_{m=1}^{k} b_{k,m}\bracketssig[T][X]{\word{i0}\conpow{m}} + \bracketssig[T][X]{\word{i}} =
        \sum\limits_{m=1}^{k}\left(-(m\blambda^0 + \blambda^i) b_{k,m} + b_{k,m+1}\right)\bracketssig[T][X]{\word{i0}\conpow{(m-1)}} + b_{k, 1}\bracketssig[T][X]{\word{i}}.
    \end{equation}
    with $b_{k, k + 1} := 0$.
    This leads to
    \begin{align*}
        &b_{k, 1} = 1, \quad b_{k, m + 1} = \blambda^0(m - k), \\&b_{k, m} = (\blambda^0)^{m - 1}(m - k)(m - 1 - k)\ldots (1 - k) = (-\blambda^0)^{m}\dfrac{(k - 1)!}{(k - m - 1)!},
    \end{align*}
    and completes the proof.
\end{proof}

\section{Numerical scheme for infinite-dimensional ODEs}\label{sect:numerical_ode}
In this section, {we describe a heuristic numerical algorithm for the} solution of an infinite-dimensional ODE for $\bpsi_t \in \eTA$:
\begin{equation}\label{eq:inf_dim_ode}
\begin{cases}
    \dot\bpsi_t = -\G\bpsi_t + F(\bpsi_t), \quad t \geq 0,\\
    \bpsi_0 = \bell,
\end{cases}
\end{equation}
{which was used to solve the Riccati equation in Section~\ref{sec:cf_riccati}.}

By Proposition \ref{prop:variation_of_c}, the equation \eqref{eq:inf_dim_ode} can be rewritten in the integral form
\begin{equation}
    \bpsi_t = \D[{t}](\bell) + \int_0^t\D[{t - s}]F(\bpsi_s)\,ds.
\end{equation}
We denote by $\hat\bpsi$ an approximated solution truncated at order $N$ and corresponding to the discretization step $h$. We also denote by $\hat F(\psi)$ the value of $F(\psi)$ truncated at the same order $N$.
This allows us to construct a semi-integrated scheme for numerical resolution of \eqref{eq:inf_dim_ode}:
\begin{align}
    \bpsi_{t + h} = \D[{h}](\bpsi_t) + \int_t^{t + h}\D[{t + h - s}]F(\bpsi_s)\,ds \approx  \D[{h}](\bpsi_t) + \int_t^{t+h}\D[{t + h - s}]\,ds F(\bpsi_t),
\end{align}
so that
\begin{equation}
    \hat\bpsi_{t + h} = \D\hat\bpsi_t + \mathcal{C}_h^\lambda \hat F(\hat\bpsi_t),
\end{equation}
where the operator $\mathcal{C}_h\colon \eTA \to\eTA$ is defined by
\begin{equation}
    \mathcal{C}_h\colon \bell = \sum_{n\geq 0}\sum_{|\word{v
    }| = n}\bell^{\word{v}} \mapsto \mathcal{C}_h\bell = \sum_{n\geq 0}\sum_{|\word{v
    }| = n}\left(\frac{1 - e^{-\blambda^{\word{v}} h}}{\blambda^{\word{v}}}\right)\bell^{\word{v}}
\end{equation}
with convention $\left(\frac{1 - e^{-\blambda^{\word{v}} h}}{\blambda^{\word{v}}}\right)\bigg|_{\blambda^{\word{v}} = 0} = h$.

To improve the numerical stability, we use the predictor-corrector scheme:
\begin{align}
    \hat\bpsi_{t + h}^P &= \D\hat\bpsi_t + \mathcal{C}_h^\lambda F(\hat\bpsi_t), \\
    \hat\bpsi_{t + h} &= \D\hat\bpsi_t + \mathcal{C}_h^\lambda \left(\frac{F(\hat\bpsi_t) + F(\hat\bpsi_{t + h}^P)}{2}\right). \\
\end{align}

\end{document}